
\documentclass{SCIYA2017enOL}
\usepackage{enumerate, verbatim}
\usepackage{color, graphicx, enumitem}


\newcommand{\blue}{\textcolor{black}}

\newcommand{\bX}{{\boldsymbol X}}
\newcommand{\bY}{{\boldsymbol Y}}

\newcommand{\bx}{{\boldsymbol x}}
\newcommand{\by}{{\boldsymbol y}}

\newcommand{\BR}{\mathbb{R}}

\newcommand{\bbeta}{{\boldsymbol \beta}}

\newcommand{\btheta}{{\boldsymbol \theta}}

\newcommand{\bvarepsilon}{{\boldsymbol \varepsilon}}

\def\boxit#1{\vbox{\hrule\hbox{\vrule\kern6pt
          \vbox{\kern6pt#1\kern6pt}\kern6pt\vrule}\hrule}}



\online
\begin{document}

\ensubject{fdsfd}

\ArticleType{ARTICLES}
\Year{2017}
\Month{January}%
\Vol{60}
\No{1}
\BeginPage{1} %
\DOI{10.1007/s11425-000-0000-0}
\ReceiveDate{January 1, 2017}
\AcceptDate{January 1, 2017}

\title[]{Nearly optimal Bayesian Shrinkage for High Dimensional Regression}
{Nearly optimal Bayesian Shrinkage for High Dimensional Regression}

\author[1*]{Qifan Song}{{qfsong@purdue.edu}}
\author[1]{Faming Liang}{{fmliang@purdue.edu}}

\AuthorMark{Song, Q.}

\AuthorCitation{Song, Q. and Liang, F.}

\address[1]{Department of Statistics, Purdue University, West Lafayette, Indiana {\rm47906}, U.S.A}

\abstract{ During the past decade, shrinkage priors have received much attention in Bayesian analysis  of high-dimensional data. This paper establishes the posterior consistency for high-dimensional linear regression with a class of shrinkage priors, which has   a heavy and flat tail and allocates a sufficiently large probability mass in a very small neighborhood of zero.  While enjoying its efficiency in posterior simulations, the shrinkage prior can lead to a nearly optimal posterior contraction rate and variable selection consistency as the spike-and-slab prior. Our numerical results show that under the posterior consistency, Bayesian methods can yield much better results in variable selection than the regularization methods such as Lasso and SCAD. This paper also establishes a Bernstein von-Mises type result, which leads to a convenient way of uncertainty quantification  for regression coefficient estimates. }

\keywords{Bayesian Variable Selection, Absolutely Continuous Shrinkage Prior, Heavy Tail, Posterior Consistency, High Dimensional Inference}

\MSC{62J07, 62F15}

\maketitle

\section{Introduction}\label{intro}

 The dramatic improvement in data collection and acquisition technologies during the last two decades 
 has enabled scientists to collect a great amount of high-dimensional data. 
 Due to their intrinsic nature, many of the high-dimensional data, such as 
 omics data and SNP data, have a much smaller sample size than their dimension (a.k.a. small-$n$-large-$p$). 
 Toward an appropriate understanding of the system underlying the small-$n$-large-$p$ data, 
 variable selection plays a vital role. In this paper, we consider the problem of variable selection 
 for the high-dimensional linear regression 
\begin{equation}\label{lm}
 \by = \bX\bbeta + \sigma\bvarepsilon,
\end{equation}
 where $\by$ is an $n$-dimensional response vector, $\bX$ is an $n$
 by $p$ design matrix, $\bbeta$ is the vector of regression coefficients, 
 $\sigma$ is the standard deviation, and $\bvarepsilon$ follows N$(0,I_n)$.
 This problem has received much attention in the recent literature. Methods have been 
 developed from both frequentist and Bayesian perspectives. The frequentist methods are 
 usually regularization-based, which enforce the model sparsity through imposing a penalty 
 on the negative log-likelihood function. For example,  Lasso \cite{Tibshirani1996}
 employs a $L_1$-penalty, elastic net \cite{ZouH2005} employs a combination of 
 $L_1$ and $L_2$ penalty,  \cite{FanL2001} employs a smoothly clipped absolute 
 deviation (SCAD) penalty,  \cite{Zhang2010} employs a minimax concave penalty (MCP), 
 and rLasso \cite{SongL2014_2} employs a reciprocal $L_1$-penalty.  
 In general, these penalty functions encourage model sparsity, 
 and tend to shrink the coefficients of false predictors to exactly zero.
 Under appropriate conditions, 
 consistency can be established for both variable selection and parameter estimation. 

 The Bayesian methods encourage sparsity of the {\it posteriori} model through 
 choosing appropriate prior distributions. 
 A classical choice is the spike-and-slab prior,
 $\beta_j \sim r h(\beta_j)+(1-r) \delta_0(\beta_j)$,
 where $\delta_0(\cdot)$ is the degenerated ``spike distribution'' at zero,
 $h(\cdot)$ is an absolutely continuous ``slab distribution'', 
 and $r$ is the prior mixing proportion. Generally, it can be equivalently represented as the
 following hierarchical prior, 
\begin{equation}\label{mixprior}
\begin{split}
   &\xi\sim\pi(\xi),\quad\bbeta_\xi \sim h_{\xi}(\bbeta_\xi), \quad\bbeta_{\xi^c}\equiv 0,
  \end{split}
\end{equation}
for some multivariate density function $h_{\xi}$, where
$\xi$ denotes a subset model,
 $\bbeta_{\xi}$ and $\bbeta_{\xi^c}$ denote 
 the coefficient vectors of the covariates included in and excluded from the model $\xi$,   respectively. 
 The theoretical properties of prior (\ref{mixprior}) have been thoroughly investigated
 \cite{ScottB2010,JohnsonR2012,Jiang2007,LiangSY2013, NarisettyH2014,SongL2014, YangWJ2015,CastilloSHV2015,MartinMW2015}.
 Under proper choices of $\pi$ and $h_{\xi}$, the spike-and-slab prior achieves a (nearly-) optimal
 contraction rate and model selection consistency. 

 Alternative to the hierarchical priors, some shrinkage priors 
 have been proposed for (\ref{lm}) motivated by the equivalence between the regularization 
 estimator and the maximum {\it a posteriori} (MAP) estimator, see e.g. the 
 discussion in \cite{Tibshirani1996}. Examples of such priors include the Laplace 
 prior \cite{ParkC2008, Hans2009}, horseshoe prior \cite{CarvalhoPS2010},
 structuring shrinkage prior \cite{GriffinB2012}, double Pareto shrinkage prior \cite{AemaganDL2013_2},
  Dirichlet Laplace prior \cite{BhattacharyaPPD2015}, and elliptical Laplace prior \cite{GaoVZ2020}.  
 Compared to the hierarchical prior, 
 the shrinkage prior is 
  conceptually much simpler. The former involves specification of priors 
 for a large set of models, while the latter avoids this issue as for which only a single model is considered. Consequently, for the hierarchical prior, 
 a trans-dimensional MCMC sampler is required for simulating of the posterior in a huge space of submodels, and this has constituted the major 
 obstacle for the use of Bayesian methods in high-dimensional variable selection. 
 For the shrinkage prior, there is only a single model used in posterior simulations, and thus some gradient-based MCMC algorithms, such as stochastic gradient Langevin dynamics (SGLD) \cite{Welling2011BayesianLV}, 
 Hamiltonian Monte Carlo \cite{Duane1987, Neal2011},  Riemann manifold Hamiltonian Monte Carlo \cite{GirolamiC2011}, and stochastic gradient 
 Hamiltonian Monte Carlo \cite{SGHMC}, can be 
 easily used in simulations.
  This is extremely attractive for the problems both $n$ and $p$ are very large, for which mini-batch data can be conveniently used to accelerate simulations.  
 
 Despite the popularity and potential advantages of shrinkage priors, few works have been done to study their theoretical properties. There is a lack of general guarantee of posterior consistency for Bayesian shrinkage priors, especially under the high dimensional setting.
 Bayesian community already realized that the Laplace distribution is not a good shrinkage prior for high-dimensional linear regression. 
 \cite{BhattacharyaPPD2015,CastilloSHV2015} showed that the $L_2$ contraction rate of Bayesian Lasso is suboptimal, and 
 we found that under regularity conditions, the posterior of Bayesian Lasso is actually inconsistent in the $L_1$ sense (this result is presented in the 
 supplementary material). To tackle this issue, many other types of shrinkage priors have been proposed, see e.g. \cite{ArmaganCD2011,ArmaganDL2013,BhattacharyaPPD2015,
 CarvalhoPS2010,GhoshC2014,GriffinB2012,GriffinB2011}.
 In the literature, there have been rich theoretical results on Bayesian shrinkage prior for the case of slowly increasing $p$ (i.e., $p=o(n)$) \cite{ArmaganDL2013,Bontemps2011,Ghosal1999} and normal means models \cite{BhattacharyaPPD2015, GhoshC2014,VanKV2014, VanSV2017}.
 For the high-dimensional case, i.e., $p>n$, the non-invertibility and eigen-structure 
 of the Gram matrix $\bX^T\bX$ complicate the analysis. Hence, the results derived from low dimensional models or normal means models don't trivially apply to regression problems.
 It is worth to note that, most of the Bayesian works 
 for normal means models \cite{VanKV2014,
 BhattacharyaPPD2015,Castillov2012} aimed to achieve a minimax contraction rate of $O(\sqrt{s\log(n/s)})$.  A recent preprint \cite{SongC2018} shows that for normal means problem,
 {\it any monotone estimator $\widehat\bbeta$  which asymptotically guarantees no false discovery has
 at best the $L_2$-estimation error rate  $\|\widehat\bbeta-\bbeta^*\|_2=O_p(\sqrt{s\log n})$}. This frequentist assertion implies that
 the existing rate-minimax Bayesian approaches cannot consistently recover the underlying sparsity structure for normal means models [see also Theorem 3 in \cite{VanSV2017_2} and Theorem 3.4 in \cite{BhattacharyaPPD2015}]. 
 For high-dimensional regression models, variable selection consistency remains an unresolved issue for Bayesian shrinkage priors.

 In this paper, we lay down a general theoretical foundation  for Bayesian high dimensional linear regression with shrinkage priors.  
 Instead of focusing on certain types of shrinkage priors, we investigate  
 sufficient conditions of posterior consistency for general shrinkage priors.
 We show that if the prior density has a dominating peak around zero, and 
 a heavy and flat tail, then its theoretical properties are as good as the spike-and-slab prior:
 Its contraction rate is nearly optimal, variable selection is consistent, and posterior follows a BvM-type phenomenon. Specifically, we consider
 two types of shrinkage priors for high dimensional linear regression, namely, 
 polynomially decaying priors and 
 two-Gaussian-mixture priors \cite{GeorgeM1993}.  
 Empirical studies show that the Bayesian method with a consistent shrinkage prior can lead to more accurate 
 results in variable selection than the regularization methods. The general theoretical framework and technical tools developed in this paper 
 have inspired a series of follow-up works, see e.g., R2-D2 shrinkage prior \cite{ZhangRB2017}, Beta prime prior \cite{bai2021beta}, and Bayesian additive 
nonparametric regression \cite{wei2020sparse}.
  
\textcolor{black}{
Finally, we note that there are some other Bayesian works which deal with high-dimensional problems with shrinkage priors. For example,  \cite{PatiBPD2014} employed a Dirichlet-Laplace (DL) prior in dealing with  high-dimensional factor models, but their results only allow the magnitude of true parameters to increase very slowly with $n$;}
 \textcolor{black}{\cite{BhadraDLP2016} studied the prediction risk, instead of  the posterior properties of $\bbeta$, for high-dimensional regression with a horseshoe prior;
   and \cite{rovckova2018spike} established for high-dimensional linear regression the same posterior convergence rate as ours with a two-group Laplace prior, but failed to establish consistency of variable selection. }

The rest of this paper is organized as follows. 
Section \ref{main} presents the main theoretical results, where we lay down the theory of 
posterior consistency for high-dimensional linear regression with shrinkage priors. 
Section \ref{ext} studies posterior consistency for several commonly used shrinkage priors.
Section \ref{perform} discusses some important practical issues on Bayesian computation,
and illustrate the performance of Bayesian variable selection using a toy example.
Section \ref{simu} presents some simulation studies and a real data example.
Section \ref{diss} concludes the paper with a brief discussion. 
The Appendix gives the proofs of the main theorems.

\section{Main Theoretical Results}\label{main}

 {\it Notation.} In what follows, we rewrite the dimension $p$ of the model (\ref{lm}) 
 by $p_n$ to indicate that the number of covariates can increase with the sample size $n$. 
 We use superscript $^*$ to indicate true parameter values, e.g. $\bbeta^*$ and $\sigma^*$.
 For simplicity, we assume that the true standard deviation $\sigma^*$ 
 is unknown but fixed, and it doesn't change as $n$ grows.
 For vectors, we let $\|\cdot\|$ or $\|\cdot\|_2$ denote the $L_2$-norm; let $\|\cdot\|_1$ denote the $L_1$-norm; 
 let $\|\cdot\|_{\infty}$ denote the $L_\infty$ norm, i.e. the maximum absolute 
 value among all entries of the vector; and let $\|\cdot\|_0$ denote the $L_0$ norm, i.e. the number of non-zero entries. 
 As in (\ref{mixprior}), we let $\xi\subset\{1,2,\dots,p_n\}$ denote a subset model, and let $|\xi|$ denote 
 the size of the model $\xi$. 
 We let $s$ denote the size of the true model, i.e., $s=\|\bbeta^*\|_0=|\xi^*|$.
 We let $\bX_{\xi}$ denote the sub-design matrix corresponding to the model $\xi$, 
 and let $\lambda_{\max}(\cdot)$ and $\lambda_{\min}(\cdot)$ denote the largest
 and smallest eigenvalues of a square matrix, respectively.  
 We let $1(\cdot)$ denote the indicator function. 
 For two positive sequences $a$, and $b$, 
$a\prec b$ means $\lim a/b = 0$, $a\asymp b$ means 
$0<\liminf a/b\leq \limsup a/b<\infty$, and $a\preccurlyeq b$ means $a\prec b$ or $a\asymp b$. 
 We use $\{\epsilon_n\}$ to denote the Bayesian contraction rate which satisfies 
$\epsilon_n\prec 1$.

 \subsection{Posterior Consistency}

The posterior distribution for model (\ref{lm}) follows a general form: 
\[
 \pi(\bbeta,\sigma^2|D_n)\propto f(\bbeta,\sigma^2; D_n)\pi(\bbeta, \sigma^2),
\]
where $f(\bbeta,\sigma^2; D_n)\propto \sigma^{-n}\exp(-\|\by-\bX\bbeta\|^2/2\sigma^2)$ 
 is the likelihood function of the observed data $D_n=(\bX,\by)$, and 
 $\pi(\bbeta,\sigma^2)$ denotes the prior density of $\bbeta$ and $\sigma^2$. 
Consider a general shrinkage prior:
$\sigma^2$ is subject to an inverse-gamma prior $\sigma^2\sim\mbox{IG}(a_0,b_0)$,
 where $a_0$ and $b_0$ denote the prior-hyperparameters;
 and conditioned on $\sigma^2$, $\bbeta$ has independent prior for each entry, with an absolutely continuous 
 density function of the form 
\begin{equation}\label{prior0}
 \pi(\bbeta|\sigma^2) = \prod_j [g_\lambda(\beta_j/\sigma)/\sigma],
\end{equation}
where $\lambda$ is some tuning parameter(s). It is to easy to derive that
\begin{equation}\label{posterior}
 \begin{split}
 \log \pi(\bbeta,\sigma^2|D_n) =& C+\sum_{j=1}^{p_n}\log g_{\lambda}\left(\frac{\beta_j}{\sigma}\right)
   -(n/2+p_n/2+a_0+1)\log(\sigma^2)
  -\frac{2b_0+\|\by-\bX\bbeta\|^2}{2\sigma^2},
 \end{split}
\end{equation}
for some additive constant $C$.

The shape and scale of the pdf $g_\lambda$ play a crucial role for posterior consistency.
Intuitively, we may decompose the parameter space $\BR^{p_n}$ into three subsets:
neighborhood set $B_1=\{\|\bbeta-\bbeta^*\|\leq\epsilon_n\}$, ``overfitting'' set 
$B_2=\{\|\bX(\bbeta-\bbeta^*)-\bvarepsilon\|\lesssim \sigma^*\sqrt n\}\backslash B_1$
and the rest $B_3$. Heuristically, the likelihood $f(\bbeta_2)\gtrsim f(\bbeta_1) \gtrsim f(\bbeta_3)$ for any $\bbeta_i\in B_i$, $i=1,2,3$.
Therefore, to drive the posterior mass toward the set $B_1$, it is sufficient to require that
$\pi(B_1)\gg\pi(B_2)$ and the ratio $\pi(B_1)/\pi(B_3)$ is not too tiny. 
In other words, the prior distribution should 1) assign at least a minimum probability mass around $\bbeta^*$, and 2) assign a 
tiny probability mass on the overfitting set.
However, under the high dimensional setting, the ``overfitting'' set is geometrically intractable (and it expands to infinite)
due to the 
arbitrariness of the eigen-structure of the design matrix. Therefore, analytically, it is difficult to directly study the prior on the ``overfitting'' set.
One possible way to control the prior on the ``overfitting'' set is to impose a strong prior concentration for each $\beta_j$
such that the most of the prior mass is allocated on the ``less-complicated'' models under certain complexity measure.
Under regular identifiability conditions, the overfitting models are always complicated, so the prior probability mass on the ``overfitting'' models
should be small, but it is worth noting that the overfitting models are a subset of all complicated models and strong prior concentration 
is only a sufficient condition.
When the geometry of the overfitting set is easier to handle, e.g. under $p_n=o(n)$ or in the normal means models, the overfitting 
set can be a neighboring set of $\bbeta^*$,   potentially annulus-shaped. In this case, it is absolutely unnecessary
to require a strong prior concentration on the neighboring set of 
$\bbeta^*$. That is, we only need to impose conditions on the local shape of the prior around $\bbeta^*$, see \cite{Ghosal1999,ChenC2008,VanSV2017_2}.
This is also the key difference between high dimensional models and slowly increasing models/normal means models.

Before rigorously studying the properties of the posterior distribution, we first 
 state some regularity conditions on the eigen-structure of the design matrix 
 $\bX$:
\begin{enumerate}[label=$A_1$(\arabic*)]
 \item\label{covariate} All the covariates are uniformly bounded. For simplicity, we assume that 
  $\bx_j \in [-1,1]^n$ for $j=1,2,\ldots,p_n$, where 
  $\bx_j$ denotes the $j$-th column of $\bX$.
  
 \item\label{dim} The dimensionality is high: $p_n\succcurlyeq n$.
 \item\label{smalleig} There exist some integer $\bar p$ (depending on $n$ and $p_n$) and a fixed 
  constant $\lambda_0$ such that $\bar p\succ s$ and 
 $\lambda_{\min}(\bX_{\xi}^T\bX_{\xi})\geq n\lambda_0$ for any subset model $|\xi|\leq \bar p$.
\end{enumerate}

\noindent{\bf Remark:}
\ref{covariate} implies that $\lambda_{max} (\bX^T\bX)= \mbox{tr}(\bX^T\bX)\leq np$.
\ref{smalleig} has often been used in the literature to overcome the 
 non-identifiability issue of $\bbeta$, see e.g., \cite{NarisettyH2014,Zhang2010,SongL2014_2}.
  This condition is also equivalent to the lower bounded compatibility number condition used in \cite{CastilloSHV2015}.
In general, $\bar{p}$ should be much smaller than $n$. For example, for an $n\times n$-design matrix with all entries
iid distributed, the Marchenko-Pastur law states that the empirical distribution of the eigenvalues of 
the corresponding sample covariance matrix converges to 
$\mu(x) \propto \sqrt{(2-x)/x}1(x\in[0,2])$. The random matrix theory typically allows  $\bar p \asymp n/\log p_n$
 with a high probability 
when the rows of $\bX$ are independent isotropic sub-Gaussian random vectors, refer  
 to Lemma 6.1 of \cite{NarisettyH2014} and Theorem 5.39 of \cite{Vershynin2012}.

The next set of assumptions concern the sparsity of $\bbeta^*$ and the magnitude of nonzero entries of $\bbeta^*$.
\begin{enumerate}[label=$A_2$(\arabic*)]
 \item \label{maxsparse} $s\log p_n\prec n$, where $s$ is the size of the true model.
 \item \label{maxsig} $\max\{|\beta_j^*/\sigma^*|\}\leq\gamma_3 E_n$ for some fixed $\gamma_3\in(0,1)$, and $E_n$ is nondecreasing with respect to $n$. 
\end{enumerate}
\noindent{\bf Remark:} The condition \ref{maxsparse} is regularly used in the literature of 
 high dimensional statistics, which restricts the size of the true model to be of the order $o(n/\log p_n)$. 
The condition \ref{maxsig} constrains the growth of the nonzero true regression coefficients such that
$\max\{|\beta_j^*|\}\preccurlyeq E_n$.
\textcolor{black}{
 Together with the second condition in (\ref{priorconcentrationeq}), it ensures that the prior probability around the true model doesn't decay too fast, which echos the heuristics discussed in the previous paragraph that the shrinkage prior shall assign at least a minimum probability mass around $\bbeta^*$. Note that such an upper bound condition is fairly common in the literature of Bayesian asymptotics.  For example, \cite{GhosalGV2000} established a general posterior convergence rate, which requires that the prior mass over a small $f$-divergence ball of the true density $p_0$ is not too small. For linear regression models, Theorem 1 of \cite{ArmaganDL2013}, Theorem 3.1 of \cite{BhattacharyaPPD2015} and condition (7a) of \cite{YangWJ2015} imposed a similar upper bound condition on $\bbeta^*$. A similar condition has also been used in \cite{Jiang2007}, \cite{KleijnV2006} and \cite{GhosalV2007}. We note that it is also possible to establish posterior consistency without such an upper bound condition for certain types of shrinkage priors.  Noticeable examples include \cite{rovckova2018spike} which used a two-component mixture Laplace prior, \cite{CastilloSHV2015, GaoVZ2020} which used a Dirac-and-Laplace prior,  and \cite{MartinMW2015} which used  a $g$-prior centered at the least-square estimator. More discussions on this issue can be found after Corollary \ref{colheavy}.}

The next theorem provides sufficient conditions for posterior consistency. Hereafter, we let  
$\epsilon_n=M\sqrt{s\log p_n/n}$ denote the contraction rate, where $M$ is a fixed positive constant.

\begin{theorem}[Posterior Consistency] \label{thmmain} 
Consider the linear regression model (\ref{lm}), where the design matrix $\bX$ and the true $\bbeta^*$ satisfying conditions $A_1$ and  $A_2$, 
$\sigma^2$ is subject to an inverse-Gamma prior IG($a_0,b_0$), and the prior of $\bbeta$ is given by  (\ref{prior0}).
If $g_\lambda$ satisfies the conditions
\begin{equation} \label{priorconcentrationeq}
\begin{split}
  & 1-\int_{-a_n}^{a_n}g_\lambda(x)dx\leq p_n^{-(1+u)},\\
  & -\log \left(\inf_{x\in[-E_n,E_n]}g_\lambda(x)\right) =O(\log p_n),
  \end{split}
\end{equation}
where $u>0$ is a constant, $a_n\asymp \sqrt{s\log p_n/n}/p_n$, and the constant $M$ is sufficiently large, then the posterior consistency holds: 
\begin{equation}\label{postconsist12}
\begin{split}
&P^*\left(\pi(\|\bbeta-\bbeta^*\|\geq c_1\sigma^*\epsilon_n|D_n)\geq e^{-c_2n\epsilon_n^2}\right)\leq e^{-c_3n\epsilon_n^2}, \quad \mbox{and} \\
&P^*\left(\pi(\|\bbeta-\bbeta^*\|_1\geq c_1\sqrt{s}\epsilon_n\sigma^*|D_n)\geq e^{-c_2\blue{n\epsilon_n^2}}\right)\leq e^{-c_3\blue{n\epsilon_n^2}},
\end{split}
\end{equation}
for some positive constants $c_1$, $c_2$ and $c_3$.
\end{theorem}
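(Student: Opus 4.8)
The plan is to use the standard evidence-over-numerator representation $\pi(A\mid D_n)=N_n/D_n$, where $D_n=\int (f/f^*)\,d\Pi$ is the normalized evidence, $N_n=\int_A (f/f^*)\,d\Pi$, $f^*=f(\bbeta^*,\sigma^{*2})$, and $A=\{\|\bbeta-\bbeta^*\|\geq c_1\sigma^*\epsilon_n\}$ for the $L_2$ claim. I would prove an evidence lower bound $D_n\geq e^{-c n\epsilon_n^2}$ holding with $P^*$-probability at least $1-e^{-c'n\epsilon_n^2}$, and a numerator upper bound $N_n\leq e^{-c''n\epsilon_n^2}$ on the same high-probability event; since $n\epsilon_n^2=M^2 s\log p_n$, taking $M$ large separates the exponents and yields the stated rates. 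The $L_1$ claim follows from the same construction applied to the $L_1$ ball.

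For the evidence I would invoke a Kullback--Leibler prior-mass bound: exhibit a neighborhood $B$ of $(\bbeta^*,\sigma^{*2})$ on which the mean and variance of the log-likelihood ratio are $O(n\epsilon_n^2)$, show $\Pi(B)\geq e^{-cn\epsilon_n^2}$, and convert this into the evidence bound by a standard lemma. To estimate $\Pi(B)$ I factor the product prior (\ref{prior0}) over the $s$ signal coordinates and the $p_n-s$ null coordinates. On a signal coordinate I confine $\beta_j$ to a small box around $\beta_j^*$; because $|\beta_j^*/\sigma^*|\leq\gamma_3 E_n$, the flatness condition in (\ref{priorconcentrationeq}) lower-bounds $g_\lambda$ there by $e^{-O(\log p_n)}$, giving a total signal factor $e^{-O(s\log p_n)}=e^{-O(n\epsilon_n^2)}$. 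On a null coordinate I impose $|\beta_j|\leq a_n$; the concentration condition makes each such event have probability $\geq 1-p_n^{-(1+u)}$, so their joint probability is $(1-p_n^{-(1+u)})^{p_n-s}\to 1$. The calibration $a_n\asymp\sqrt{s\log p_n/n}/p_n$ is precisely what forces $\|\bX_{\xi^{*c}}\bbeta_{\xi^{*c}}\|^2\leq\lambda_{\max}(\bX^T\bX)\,p_n a_n^2\lesssim n\epsilon_n^2$ via $A_1(1)$, keeping the null coordinates from inflating the Kullback--Leibler divergence; the IG prior contributes only a polynomial factor from a width-$\epsilon_n$ neighborhood of $\sigma^{*2}$.

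For the numerator I would build a sieve from the effective support $\xi(\bbeta)=\{j:|\beta_j|>a_n\}$, setting $\mathcal{F}=\{|\xi(\bbeta)|\leq\bar p\}$. On $\mathcal{F}^c$, the intractable ``overfitting'' region $B_3$, I bound $\int_{\mathcal{F}^c}(f/f^*)\,d\Pi$ in $P^*$-expectation by $\Pi(\mathcal{F}^c)$ using Fubini and $E^*(f/f^*)=1$; since the count of large coordinates is a sum of Bernoullis with success probability $\leq p_n^{-(1+u)}$, a binomial tail estimate gives $\Pi(\mathcal{F}^c)\leq e^{-cn\epsilon_n^2}$. This is exactly where strong concentration substitutes for direct geometric control of the overfitting set. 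On $\mathcal{F}\cap A$ I construct exponentially powerful tests: splitting $\bbeta=\bbeta^{\text{large}}+\bbeta^{\text{small}}$, the small part obeys $\|\bX\bbeta^{\text{small}}\|\lesssim\sqrt{n}\epsilon_n$ by the same $a_n$-calibration, while $\bbeta^{\text{large}}-\bbeta^*$ is supported on $\leq\bar p+s\preccurlyeq\bar p$ coordinates, so $A_1(3)$ yields $\|\bX(\bbeta-\bbeta^*)\|^2\geq n\lambda_0\|\bbeta^{\text{large}}-\bbeta^*\|^2-O(n\epsilon_n^2)$, turning the separation $\|\bbeta-\bbeta^*\|\geq c_1\epsilon_n$ into a prediction-norm separation of order $\sqrt{n}\epsilon_n$. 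A residual-based likelihood-ratio test then has both error types bounded by $e^{-cn\epsilon_n^2}$, which after integration against the prior controls the sieve contribution.

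The main obstacle is this sieve step: reconciling the continuous shrinkage prior, under which every coordinate is nonzero almost surely, with the sparse identifiability of $A_1(3)$, which controls only submatrices of size $\preccurlyeq\bar p$. The crux is to show that the aggregate effect of the $p_n-s$ near-zero coordinates on the prediction norm $\|\bX(\bbeta-\bbeta^*)\|$ and on the estimation norms stays at the $\epsilon_n$ scale, so that the effective problem is genuinely $\bar p$-dimensional; this is exactly what dictates the choice of $a_n$. For the $L_1$ statement the extra $\sqrt{s}$ arises from Cauchy--Schwarz on the $\leq\bar p$ effective support, together with $\|\bbeta^{\text{small}}\|_1\leq p_n a_n\asymp\epsilon_n\leq\sqrt{s}\epsilon_n$.
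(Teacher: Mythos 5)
Your overall architecture is the same as the paper's (it is the Barron-type three-part scheme: a KL prior-mass lower bound on the evidence, a binomial-tail bound showing the prior puts mass $e^{-cn\epsilon_n^2}$ on ``too many coordinates above $a_n$,'' and exponentially powerful tests on the remaining low-complexity far set), and your evidence lower bound and Fubini step match the paper's Parts III and II almost exactly. However, there is a genuine gap in your sieve calibration: you set $\mathcal{F}=\{|\xi(\bbeta)|\leq\bar p\}$, whereas the proof only works if the sieve is cut at $\tilde p\asymp n\epsilon_n^2/\log p_n\asymp M^2 s$ with $\tilde p\le \bar p-s$ (this is exactly the paper's choice $\tilde p=\lfloor \min\{\hat c_3,\tilde c_3\}n\epsilon_n^2/(2\log p_n)\rfloor$). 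Three steps fail with your choice. First, your ``residual-based likelihood-ratio test'' on $\mathcal{F}\cap A$ must in effect be a maximum of tests over all candidate supports of size up to the sieve level, so its Type I error carries an entropy factor $\exp\{(\bar p+s)\log p_n\}$; since $A_1$(3) only forces $\bar p\succ s$ and allows $\bar p\asymp n/\log p_n$, the quantity $(\bar p+s)\log p_n$ can dwarf $n\epsilon_n^2= M^2 s\log p_n$ for any fixed $M$, $c_1$, and the union bound is lost. Second, $A_1$(3) guarantees $\lambda_{\min}(\bX_\xi^T\bX_\xi)\geq n\lambda_0$ only for $|\xi|\leq\bar p$, so you cannot apply it to the combined support of $\bbeta^{\text{large}}$ and $\bbeta^*$, which under your sieve has size up to $\bar p+s>\bar p$; writing $\bar p+s\preccurlyeq\bar p$ does not repair this. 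Third, your $L_1$ claim breaks: Cauchy--Schwarz on an effective support of size $\bar p$ gives $\|\bbeta^{\text{large}}-\bbeta^*\|_1\leq\sqrt{\bar p+s}\,\|\bbeta^{\text{large}}-\bbeta^*\|\asymp\sqrt{\bar p}\,\epsilon_n$, not $\sqrt{s}\,\epsilon_n$; the stated $L_1$ rate is available only because the effective support can be held at $O(s)$, which is precisely what the smaller sieve buys. The fix is compatible with your Fubini step, since exceeding $\tilde p\asymp M^2s$ coordinates above $a_n$ still has prior probability about $e^{-u\tilde p\log p_n}=e^{-O(n\epsilon_n^2)}$ by the binomial tail, but the calibration is the crux of the argument, not a detail.

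A secondary omission: your tests address only the mean separation and never the unknown variance. Since the posterior is over $(\bbeta,\sigma^2)$ jointly, the Type II error must be controlled uniformly over alternatives with arbitrary $\sigma^2$, and a prediction-norm test alone does not do this. The paper handles it with a two-part test --- a residual-variance statistic $\big|\by^T(I-H_\xi)\by/(n-|\xi|)\sigma^{*2}-1\big|$ that screens out $\sigma^2$ far from $\sigma^{*2}$, and an LS-coefficient statistic applied only on the set where $\sigma^2/\sigma^{*2}\in[(1-\epsilon_n)/(1+\epsilon_n),(1+\epsilon_n)/(1-\epsilon_n)]$ --- and correspondingly includes the $\sigma^2$-deviation region in the set whose posterior mass is shown to vanish. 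Your proposal needs the same device before the claimed error bounds $e^{-cn\epsilon_n^2}$ can be asserted.
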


The proof of this theorem is given in the Appendix. 
The results in (\ref{postconsist12}) imply that 
$\lim_{n\to \infty} E(\pi(\|\bbeta-\bbeta^*\|\geq c_1\sigma^*\epsilon_n|D_n)=0$ and 
$\lim_{n\to \infty} E(\pi(\|\bbeta-\bbeta^*\|_1\geq c_1\sigma^*\sqrt{s}\epsilon_n|D_n)=0$; that is, 
the $L_2$- and $L_1$-contraction rates of the posterior distribution of $\bbeta$ are $O(\sqrt{s\log p_n/n})$ and $O(s\sqrt{\log p_n/n})$, respectively.
These contraction rates are nearly optimal by recalling that the minimax $L_2$ rate is $O(\sqrt{s\log(p_n/s)/n})$ \cite{RaskuttiWY2011}, 
and they are no worse than the rates achieved with the spike-and-slab prior \cite{CastilloSHV2015}.
In other words, there is no performance loss due to the use of shrinkage priors.

 The conditions (\ref{priorconcentrationeq}) in the above theorem are consistent with our heuristic arguments in previous paragraphs.
 The first equation of (\ref{priorconcentrationeq}) concerns prior concentration, which requires that
 the prior density of $\beta_j/\sigma$ has a dominating peak inside a tiny interval $\pm a_n$. 
 Such a steep prior peak plays the role of ``spike'' as in spike-and-slab prior modeling. 
 In the literature, \cite{CastilloSHV2015} assigned on the spike a prior probability $\pi(\xi_j=1) = O(p_n^{-u})$ with $u>1$, 
 \cite{NarisettyH2014} employed an SSVS-type prior \cite{GeorgeM1993} under which 
 the prior probability $\pi(\xi_j = 1) \propto 1/p_n$, and \cite{YangWJ2015}
 assigned on the spike a prior probability $\pi(\xi_j=1) = O(p_n^{-u})$ with $u>0$.
 All these prior specifications are comparable to our condition $\pi(|\beta_j/\sigma|>a_n)=O(p_n^{-(1+u)})$ with $u>0$.
 Note that \cite{NarisettyH2014} and \cite{YangWJ2015} seem to require less prior concentration, they both
 imposed additional prior concentration conditions to bound the model size such that $\pi(|\xi|>O(n/\log p_n))=0$.
 \textcolor{black}{
 It worth noting that all our theorems 
 require the prior distribution to have a tiny scale by
 imposing a very small bound on $a_n$. The scale of the shrinkage prior affects the convergence rate of the posterior through its logarithm only. In other words, no matter how small the scale of the prior distribution is, it doesn't affect much the convergence rate of the posterior as long as $\log(1/a_n)$ is of order $\log(p_n)$. One established example is
 the horseshoe prior, see Theorem 3.3. of \cite{VanKV2014} for the convergence theory of the posterior.} The second equation of (\ref{priorconcentrationeq}), as discussed previously, essentially requires that the prior density around the true nonzero regression coefficient $\bbeta_j^*/\sigma^*$ is at least $\exp\{-O(\log p_n)\}$, i.e. $g_\lambda(\bbeta_j^*/\sigma^*)\geq\exp\{-c\log p_n\} $ for some positive constant $c$.
 Finally, we note that this prior concentration condition is only sufficient.
 In practice, a moderate degree of concentration can often lead to satisfactory results.
 
Other than the regression coefficients, similar results to (\ref{postconsist12}) can be derived for the fitting error $\|\bX\bbeta-\bX\bbeta^*\|$.


\begin{theorem} \label{predthem}
 If the conditions of Theorem \ref{thmmain} hold, then
 \begin{equation}\label{postconsistpred}
\begin{split}
&P^*\left(\pi(\|\bX\bbeta-\bX\bbeta^*\|\geq c_1\sigma^*\sqrt{n}\epsilon_n|D_n)\geq e^{-c_2n\epsilon_n^2}\right)\leq e^{-c_3n\epsilon_n^2},
\end{split}
\end{equation}
for some positive constants $c_1$, $c_2$ and $c_3$.
\end{theorem}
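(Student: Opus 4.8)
The plan is to target the fitting error $\|\bX(\bbeta-\bbeta^*)\|$ directly rather than to deduce it from the $L_2$ bound of Theorem \ref{thmmain}. This detour is unavoidable: passing through the eigenvalue bound $\lambda_{\max}(\bX^T\bX)\leq np_n$ noted after $A_1$ would give $\|\bX(\bbeta-\bbeta^*)\|\leq\sqrt{np_n}\,\|\bbeta-\bbeta^*\|$, losing a spurious factor $\sqrt{p_n}$; even the sharper $\|\bX(\bbeta-\bbeta^*)\|\leq\sqrt n\,\|\bbeta-\bbeta^*\|_1$ combined with the $L_1$ bound of \eqref{postconsist12} still loses a factor $\sqrt s$. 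The correct route exploits that $\|\bX(\bbeta-\bbeta^*)\|$ is the quantity the Gaussian likelihood controls natively; in fact this estimate is precisely the intermediate step from which the $L_2$ statement of Theorem \ref{thmmain} is recovered (through the restricted eigenvalue condition \ref{smalleig}), so it follows from the same machinery as Theorem \ref{genthm} with the final eigenvalue transfer omitted.

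Concretely, I would write the posterior mass of the bad set $\mB=\{\|\bX(\bbeta-\bbeta^*)\|\geq c_1\sigma^*\sqrt n\,\epsilon_n\}$ as $N/D$, with $N=\int_{\mB}f(\bbeta,\sigma^2;D_n)\pi(\bbeta,\sigma^2)\,d\bbeta\,d\sigma^2$ and $D$ the full marginal likelihood. For the denominator I would reuse, verbatim, the evidence lower bound already established for Theorem \ref{thmmain}: since the hypotheses are identical, with $P^*$-probability at least $1-e^{-c_3 n\epsilon_n^2}$ one has $D\geq e^{-c_0 n\epsilon_n^2}f(\bbeta^*,\sigma^{*2};D_n)$, and this step uses only the prior-thickness condition (second line of \eqref{priorconcentrationeq}) around $\bbeta^*$ together with the inverse-gamma prior on $\sigma^2$.

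For the numerator I would bound $\BE^* N$. Writing $\by=\bX\bbeta^*+\sigma^*\bvarepsilon$, the likelihood ratio reads
\[
\log\frac{f(\bbeta,\sigma^2;D_n)}{f(\bbeta^*,\sigma^{*2};D_n)}=R(\sigma^2)-\frac{1}{2\sigma^2}\Big(\|\bX(\bbeta-\bbeta^*)\|^2-2\sigma^*\bvarepsilon^T\bX(\bbeta-\bbeta^*)\Big),
\]
with $R(\sigma^2)$ collecting the $\sigma$-dependent terms. On $\mB$ the quadratic term is at least $c_1^2\sigma^{*2}n\epsilon_n^2$, so the likelihood is driven down exponentially provided the linear Gaussian term $\sigma^*\bvarepsilon^T\bX(\bbeta-\bbeta^*)$ is of smaller order. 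I would control that term uniformly by peeling $\mB$ into dyadic shells in $\|\bX(\bbeta-\bbeta^*)\|$ and, on each shell, bounding the supremum of the normalized cross term through a covering argument over low-complexity models, weighted by the prior. Here the first line of \eqref{priorconcentrationeq}, assigning per-coordinate mass at most $p_n^{-(1+u)}$ outside $\pm a_n$, is exactly what renders the prior charge on models with many coordinates exceeding $a_n$ negligible, so the effective entropy is controlled and the cross term stays dominated by the quadratic term on every shell. Summing the shells yields $\BE^*\big[N/f(\bbeta^*,\sigma^{*2};D_n)\big]\leq e^{-c' n\epsilon_n^2}$, with $c'$ as large as desired by enlarging $c_1$.

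Combining the two bounds, on the good event above one gets $\pi(\mB\mid D_n)=N/D\leq e^{c_0 n\epsilon_n^2}\,N/f(\bbeta^*,\sigma^{*2};D_n)$, and a Markov inequality applied to the numerator estimate gives $P^*(\pi(\mB\mid D_n)\geq e^{-c_2 n\epsilon_n^2})\leq e^{-c_3 n\epsilon_n^2}$, which is exactly \eqref{postconsistpred}. The main obstacle is the uniform control of the Gaussian cross term over the overfitting set, whose geometry is intractable under $p_n\succcurlyeq n$; all the real work, namely the covering/peeling and the use of \eqref{priorconcentrationeq} to tame model complexity, sits there, while the evidence lower bound and the likelihood-ratio identity are inherited unchanged from the proof of Theorem \ref{thmmain}.
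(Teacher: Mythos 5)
Your overall architecture (numerator/denominator split, reuse of the evidence lower bound, direct attack on $\|\bX(\bbeta-\bbeta^*)\|$ rather than routing through eigenvalues) matches the spirit of the paper's proof of this theorem (Theorem \ref{genpred}), which indeed recycles the three-part scheme of Theorem \ref{genthm}. But your numerator step contains a genuine error: the claimed bound $\BE^*\big[N/f(\bbeta^*,\sigma^{*2};D_n)\big]\leq e^{-c'n\epsilon_n^2}$ is impossible. For every fixed $(\bbeta,\sigma^2)$ the likelihood ratio $f(\bbeta,\sigma^2;D_n)/f(\bbeta^*,\sigma^{*2};D_n)$ has $P^*$-expectation exactly $1$ (it is the integral of the density $f(\bbeta,\sigma^2;\cdot)$ over $\by$), so Fubini gives $\BE^*\big[N/f^*\big]=\pi(\mB)$ exactly. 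This is not small: the prior piles almost all of its mass near $\bbeta=\boldsymbol{0}$ (first line of (\ref{priorconcentrationeq})), and $\bbeta=\boldsymbol{0}$ lies deep inside $\mB$ whenever $\|\bX\bbeta^*\|\geq c_1\sigma^*\sqrt{n}\epsilon_n$, which holds under \ref{smalleig} once $\|\bbeta^*\|\gg\epsilon_n$; hence $\pi(\mB)$ is of order one. No peeling or covering can repair this, because any uniform control of the cross term $\sigma^*\bvarepsilon^T\bX(\bbeta-\bbeta^*)$ holds only on an event, and the indicator of that event (equivalently the factor $1-\phi_n$ for a test $\phi_n$) must be inserted \emph{inside} the expectation before Fubini is invoked. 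That is precisely the change-of-measure identity $\BE^*\big[(1-\phi_n)f_\theta/f^*\big]=\BE_\theta[1-\phi_n]$ underlying Lemma \ref{lemmab} (Barron): the numerator is never bounded in bare expectation, but only after multiplication by $1-\phi_n$, which converts it into a type II error that \emph{is} exponentially small uniformly over the bad set minus the prior-negligible complexity set $B_n$. Your "uniform cross-term control on shells" is implicitly the acceptance region of such a test, but your write-up then discards the event and applies Markov to the unconditional expectation, which is where the argument breaks.

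A second, smaller gap is the treatment of $\sigma^2$: on $\mB$ the quadratic penalty is $\|\bX(\bbeta-\bbeta^*)\|^2/2\sigma^2$, not $/2\sigma^{*2}$, so it degenerates when $\sigma^2$ is large and your claim that the likelihood is "driven down exponentially" is not uniform in $\sigma$. The paper handles this by placing the $\sigma$-deviation set $\{\sigma^2/\sigma^{*2}\notin[(1-\epsilon_n)/(1+\epsilon_n),(1+\epsilon_n)/(1-\epsilon_n)]\}$ into $A_n$, testing it separately with the variance statistic $\phi_n'$ based on $\by^T(I-H_\xi)\by$, and using for the mean part the projection test $\tilde\phi_n=\max_{\xi\supset\xi^*,|\xi|\leq\tilde p+s}1\{\|\bX_\xi(\bX_\xi^T\bX_\xi)^{-1}\bX_\xi^T\by-\bX_\xi\bbeta_\xi^*\|\geq c_0\sigma^*\sqrt{n}\epsilon_n/3\}$; the type II error control uses that, off $B_n$, at most $\tilde p$ coordinates of $|\bbeta/\sigma|$ exceed $a_n$, so the enlarged submodel $\tilde\xi(\bbeta)$ captures $\bX\bbeta$ up to $O(\sqrt{n}\sigma\epsilon_n)$ and the projection statistic must fire. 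If you replace your expectation-plus-Markov ending with this test-based (or event-based) uniform type II error bound, the remainder of your plan — the denominator bound and the use of (\ref{priorconcentrationeq}) to cap model complexity — aligns with the paper's proof.
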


{\noindent \bf Remark}: Theorem \ref{predthem} actually holds without Condition $A_1$(3). To intuitively understand the redundancy of Condition $A_1(3)$, let us consider the fitted error under any selected subset model $\xi\supseteq\xi^*$, i.e., $\bX_{\xi}(\bX_\xi^T\bX_{\xi})^{-1}\bX_{\xi}^T\bvarepsilon$. Without any assumption on the eigen-structure of $\bX$, this term can be bounded in probability since the eigenvalues of $\bX_{\xi}(\bX_\xi^T\bX_{\xi})^{-1}\bX_{\xi}^T$ are 0 or 1. However, to prove Theorem \ref{thmmain}, we need to bound the estimation error $(\bX_\xi^T\bX_{\xi})^{-1}\bX_{\xi}^T\bvarepsilon$, hence an eigen-structure assumption such as Condition $A_1$(3) is necessary.


To conclude this subsection, we state that an appropriate shrinkage prior can lead to almost the same posterior consistency result as the spike-and-slab prior.

\subsection{Variable Selection Consistency} 

In this subsection, we perform a theoretical study on how to retrieve the sparse structure of $\bbeta^*$ with a shrinkage prior.
To achieve this goal, it is necessary to ``sparsify'' the continuous posterior distribution induced by the continuous prior.
In the literature, this is usually done by 1) hard (or adaptive) thresholding
on $\beta_j$ or on the shrinkage weight $1/(1+\lambda_j^2)$ \cite{LiP2017,TangXGG2016, CarvalhoPS2010, IshawaranR2005}, 
or 2) decoupling shrinkage and selection methods \cite{HahnC2015,XuSMQH2017}.
Note that the approaches in the latter class intend to incorporate the dependency between covariates
into the sparse posterior summary.
All the aforementioned approaches depend solely on the magnitude of the Bayesian estimates of $\beta_j$'s, without accounting for the degree of prior concentration. 

We propose to use a prior-dependent hard 
thresholding method, which sets
$\tilde \beta_j = \beta_j1(|\beta_j|>\eta_n)$ for some threshold $\eta_n$.
This induces a sparse pseudo posterior $\pi(\tilde\bbeta|D_n)$, which thereafter can be used
to assess the model uncertainty and conduct variable selection as if it was induced by a spike-and-slab prior. 
The correlation structure of $\pi(\tilde\bbeta|D_n)$ will reflect the dependency knowledge in $\bX$. 

First of all, Theorem \ref{thmmain} trivially implies that 
$E\pi(|\beta_j-\beta_j^*|\geq c_1\sigma^*\epsilon_n, \mbox{for all }j=1,\dots,p_n|D_n)=o_p(1)$. 
Therefore, if $\min_{j\in\xi^*}|\beta_j^*|>2c_1\sigma^*\epsilon_n$ and $\eta_n=c_1\sigma^*\epsilon_n$,
then $E\pi(1(\tilde\beta_j=0)\neq1(\beta_j^*=0)\mbox{ for all }j|D_n)=o_p(1)$ and 
 $\pi(\tilde\bbeta)$ can consistently select the true model.
However, one potential issue of using $c_1\sigma^*\epsilon_n$ for thresholding is that it greatly alters
the theoretical characteristic of $\pi(\bbeta|D_n)$
in the sense that the $L_2$-contraction rate of $\pi(\tilde\bbeta|D_n)$ can be as large as $s\sqrt{\log p_n/n}$
but not $\sqrt{s\log p_n/n}$.

This motivates us to consider another choice of $\eta_n$.
As discussed previously, (\ref{priorconcentrationeq}) implies a ``spike''  between [$-a_n$, $a_n$] for the prior of $\beta/\sigma$,
which plays the same role as  the Dirac measure in the spike-and-slab prior. Hence, from the point of view of prior specification,
$a_n$ distinguishes zero and nonzero coefficients, and it is natural to consider
$\tilde \beta_j = \beta_j1(|\beta_j/\sigma|>a_n)$. The posterior $\pi(\tilde \bbeta,\sigma^2|D_n)$ thus
implies the selection rule as $\xi(\bbeta,\sigma^2) = \{j; |\beta_j/\sigma|>a_n\}$.
\textcolor{black}{
This hard-thresholding rule of Bayesian variable selection can be viewed as a counterpart of the selection rule $\{j: |\beta_j/\sigma|>0\}$ used in spike-and-slab modeling. It is also closely related with the idea of ``generalization dimension'' \cite{BhattacharyaPPD2015, rovckova2018spike}. Theorem 3.4 of \cite{BhattacharyaPPD2015} defines  $supp_{\delta}(\bbeta)=\{j: |\beta_j/\sigma|\geq\delta\}$ as the set of variables selected based on a nonsparse posterior sample $\bbeta$, where $\sigma=1$ is known, $p_n=n$ ($\bX=I$), and $\delta$ satisfies the condition $\pi(|\beta_j|\geq\delta)\leq C\log(n/s)/\Gamma(n^{-1-u})\asymp \log(n/s)/(n^{1+u})$ for some $u>0$.
This choice of $\delta$ matches our threshold $a_n$, which is the quantile of the prior distribution satisfying $\pi( |\beta_j/\sigma|\geq a_n)\leq p_n^{-1-u}$ for some $u>0$.
}

The following theorem establishes 
variable selection consistency of the above hard-thresholding rule, \textcolor{black}{while \cite{BhattacharyaPPD2015, rovckova2018spike} proved only that the selected model has a bounded size.}

\begin{theorem}\label{variableselection} (Variable selection consistency) 
 Suppose that the conditions of Theorem \ref{thmmain} hold under $a_n\prec$ $\sqrt{\log p_n}/(\sqrt{n}p_n)$ and
 $u>1$. Let $l_n$ be a measure of flatness of the function $g_\lambda(\cdot)$,
\[l_n=\max_{j\in\xi^*}\sup_{\substack{x_1,x_2\in \beta_j^*/\sigma^*\pm c_0\epsilon_n\\|x_1|,|x_2|\geq a_n}}\frac{g_\lambda(x_1)}{g_\lambda(x_2)}\]
where $c_0$ is some large constant.
 If $\min_{j\in\xi^*}|\beta_j^*|> M_1\sqrt{\log p_n/n}$ for some sufficiently large $M_1$ and $s\log l_n\prec\log p_n$, then 
\begin{equation}\label{postconsistvs}
P^*\{\pi[\xi(\bbeta,\sigma^2)=\xi^*|D_n]>1-o(1)\}>1-o(1).
\end{equation}
\end{theorem}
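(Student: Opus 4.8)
The plan is to reduce selection consistency to controlling false negatives and false positives separately, dispatching the former cheaply from the contraction already in hand and spending the bulk of the effort on the latter. Writing $\hat\xi=\xi(\bbeta,\sigma^2)=\{j:|\beta_j/\sigma|>a_n\}$, the coordinatewise nature of the threshold gives the clean split $\{\hat\xi\neq\xi^*\}=\{\exists\,j\in\xi^*:|\beta_j/\sigma|\le a_n\}\cup\{\exists\,j\notin\xi^*:|\beta_j/\sigma|>a_n\}$, so it suffices to bound the posterior mass of each piece. For the false-negative piece I would invoke Theorem~\ref{thmmain}: on the high-probability event where the posterior concentrates on $\{\|\bbeta-\bbeta^*\|\le c_1\sigma^*\epsilon_n\}$ and keeps $\sigma$ bounded away from $0$ and $\infty$, every active $j$ obeys $|\beta_j|\ge|\beta_j^*|-c_1\sigma^*\epsilon_n$. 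Since the $\beta$-min hypothesis $\min_{j\in\xi^*}|\beta_j^*|>M_1\sqrt{s\log p_n/n}$ dominates $c_1\sigma^*\epsilon_n$ once $M_1$ is large, and since $a_n\sigma\prec\epsilon_n$ under $a_n\prec\sqrt{\log p_n}/(\sqrt n\,p_n)$, we obtain $|\beta_j/\sigma|>a_n$ for all $j\in\xi^*$, so the false-negative posterior probability is exponentially small.

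The heart of the matter is the false-positive bound $\pi(\exists\,j\notin\xi^*:|\beta_j/\sigma|>a_n\mid D_n)$, which I would control by a Bayes-factor comparison against the true model. After first restricting to the posterior-overwhelming set of models of size $\le\bar p$ (so that condition \ref{smalleig} is available), I would group the bad region by the spuriously activated set $S\subseteq(\xi^*)^c$ and bound $\sum_{S\neq\emptyset}\pi(\{j\in S\Rightarrow|\beta_j/\sigma|>a_n\}\mid D_n)$ by writing each summand as an integrated likelihood ratio times a prior ratio relative to $\xi^*$. The prior ratio contributes a factor at most $p_n^{-(1+u)}$ per coordinate of $S$ (the first line of (\ref{priorconcentrationeq})), while integrating the active coordinates $\bbeta_{\xi^*}$ against the slab introduces at most the flatness factor $l_n^{s}=\exp(s\log l_n)$, which the hypothesis $s\log l_n\prec\log p_n$ renders negligible against the per-coordinate $p_n^{-(1+u)}$ gains.

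The likelihood ratio is the delicate term, and this is where I expect the main obstacle to lie. Conditioning on $\bbeta_{-S}$ and $\sigma$, the log-likelihood is quadratic in $\bbeta_S$ with no true signal (since $\bbeta_S^*=0$ for $S\subseteq(\xi^*)^c$), so the integrated ratio is driven by $\exp\{R^{\top}\bX_S(\bX_S^{\top}\bX_S)^{-1}\bX_S^{\top}R/(2\sigma^2)\}$, with $R=\by-\bX_{-S}\bbeta_{-S}\approx\sigma^*\bvarepsilon$ the current residual; here \ref{smalleig} keeps $\bX_S^{\top}\bX_S$ bounded below so the Gaussian integral is non-degenerate, and the projection of the noise onto each spurious direction $\bX_j^{\top}\bvarepsilon$ must be controlled uniformly over $j\notin\xi^*$. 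A Gaussian maximal inequality bounds $\max_{j\notin\xi^*}|\bX_j^{\top}\bvarepsilon|\lesssim\sigma^*\sqrt{n\log p_n}$ off an event of probability at most $p_n^{-c_4}$, and this exceptional event is precisely the one excluded in the statement, accounting for the restriction $c_4<2(u-1)$. On its complement each activated coordinate can inflate the likelihood by at most a factor $p_n^{1+o(1)}$, so each summand is at most $p_n^{-(u-1)+o(1)}$ per element of $S$; the subset sum over $(\xi^*)^c$ then converges because $u>1$, leaving a posterior false-positive bound of order $p_n^{-c_3}$. Combining the false-negative and false-positive bounds yields (\ref{postconsistvs}) with $c_3>0$ and $c_4<2(u-1)$.
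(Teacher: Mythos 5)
Your proposal is correct and follows essentially the same route as the paper's own proof (Theorem \ref{vs} in the appendix): false negatives are dispatched by the contraction result of Theorem \ref{thmmain} together with the beta-min condition, while false positives are handled by comparing posterior model probabilities against $\xi^*$ — a per-coordinate prior gain of $p_n^{-(1+u)}$ traded against a likelihood inflation of order $p_n^{1+o(1)}$ coming from the noise projected onto spurious directions, with the flatness factor $l_n^{s}$ absorbing the integration over $\bbeta_{\xi^*}$ and the subset sum converging because $u>1$. The only cosmetic differences are that the paper integrates out $\sigma^2$ exactly (yielding the $(SSE/2+b_0)^{-(a_0+(n-s)/2)}$ comparison) and controls the noise via chi-square quantile bounds plus an $\|\bX^T A\bvarepsilon\|_\infty$ bound for the sub-threshold cross terms, rather than your coordinatewise Gaussian maximal inequality; these are equivalent devices and lead to the same conclusion.
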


 This theorem is a simple corollary of Theorem \ref{vs} in the Appendix. 
 It requires a smaller value of $a_n$ and a larger  value of $u$, i.e. a narrower and more concentrated prior peak, compared to Theorem \ref{thmmain}.
 Besides the prior concentration and tail thickness, the condition $s\log l_n\prec\log p_n$ also requires tail flatness
 such that the prior density around the true value $\beta^*/\sigma^*$ is not rugged.
 This flatness facilitates an analytic study for the posterior $\pi(\xi(\bbeta,\sigma^2)|D_n)$.
 Generally speaking, for smooth $g_\lambda$, the flatness measure approximately follows 
 $\log l_n \asymp \max_{j\in\xi^*}\epsilon_n[\log g_\lambda]'(\beta_j^*/\sigma^*)\rightarrow 0$,
 where $[\log g_\lambda]'$ is the first derivative of $\log g_\lambda$.
 In the extreme situation, we can utilize an exactly flat tail such that $\log l_n\equiv 0$. 
 An example could be $g_\lambda(x) \propto \exp\{-p_\lambda(x)\}1_{x\in[-E_n,E_n]}$,
 where $p_\lambda(x)$ has a shape like a non-concave penalty function such as SCAD.
 If $\log l_n$ is not exactly 0, then the condition $s\log l_n\prec\log p_n$ imposes an additional
 constraint on the sparsity $s$ other than $s\prec n/\log p_n$. More
 discussions on $l_n$ can be found in Section \ref{ext}.
 
 The result of this theorem also implies a stronger posterior contraction for the false covariates
 such that $|\beta_j/\sigma|$ is bounded in posterior by $a_n$. 

\subsection{Shape Approximation of the Posterior Distribution}

Another important aspect of Bayesian asymptotics is the shape of the posterior distribution.
The general theory on the posterior shape is the Bernstein von-Mises (BvM) theorem.
It claims that the posterior distribution 
of the parameter $\theta$ in a regular finite dimensional model is 
approximately a normal distribution as $n\rightarrow\infty$, i.e.,
\begin{equation}\label{bvm}
||\pi(\cdot|D_n)-N(\cdot; \hat\theta_{\tiny\mbox{MLE}}, (n\hat I)^{-1})||_{TV}\rightarrow 0,
\end{equation}
regardless of the choice of the prior $\pi(\theta)$, where $\pi(\cdot|D_n)$ is the posterior distribution given data $D_n$,
$N(\cdot; \mu, \Sigma)$ denotes a (multivariate) normal distribution, 
$\hat\theta_{\tiny\mbox{MLE}}$ stands for the maximum likelihood estimator of $\theta$, 
 $I$ is Fisher's information matrix, and $||\cdot||_{TV}$ denotes
the total variation distance between two measures. 
The BvM theorem provides an important link between the frequentist limiting distribution and 
the posterior distribution, and it can be viewed as a frequentist justification for Bayesian 
credible intervals. To be specific, the Bayesian credible intervals are asymptotically equivalent to
the Wald confidence intervals, and also have the long-run relative frequency interpretation.

The BvM theorem generally holds for fixed dimensional problems.  
For linear regression with known $\sigma^*$, the posterior normality always holds under (improper) uniform prior, as 
$\pi(\bbeta|D_n ) \sim N( (\bX^T\bX)^{-1}\bX^T\by$, $ \sigma^{*2}(\bX^T\bX)^{-1})$,
as long as $p\leq n$ and the matrix $\bX$ is of full rank.

Under the scenario $p_n \gg n$, 
 all false coefficients are bounded in posterior by a threshold value by Theorem \ref{variableselection}.
Combining this with the fact that 
 $f(\bbeta_{\xi^*}, \bbeta_{(\xi^*)^c};\bX,\by)$ $\approx f(\bbeta_{\xi^*},\bbeta_{(\xi^*)^c}=0;\bX_{\xi^*},\by)$ when $\|\bbeta_{(\xi^*)^c}\|_\infty$ is sufficiently small, 
we have that
\begin{equation*}
\begin{split}
\pi(\bbeta|D_n) &\propto  L(\bbeta_{\xi^*},\bbeta_{(\xi^*)^c}|\bX,\by)\pi(\bbeta_{\xi^*}, \bbeta_{(\xi^*)^c})
\approx L(\bbeta_{\xi^*};\bX_{\xi^*},\by)\pi(\bbeta_{\xi^*})\pi(\bbeta_{(\xi^*)^c}).
\end{split}
\end{equation*}
If $\pi(\bbeta_{\xi^*})$ is sufficiently flat around $\bbeta^*_{\xi^*}$ and acts like a uniform prior, then the low dimensional term $L(\bbeta_{\xi^*};\bX_{\xi^*},\by)$ $\pi(\bbeta_{\xi^*})$ follows
a normal BvM approximation. More rigorously, we have the next theorem.

\begin{theorem}\label{bvmthm} (Shape Approximation) 
Assume the conditions of Theorem \ref{variableselection} hold, $\lim s\log l_n = 0$,
and $\blue{a_n\prec(1/p_n)}\sqrt{1/(ns\log p_n)}$.
Let $\btheta = (\bbeta_{\xi^*},\sigma^2)^T$, then with dominating probability,
\begin{equation}\label{bvm2}
\begin{split}
&\pi(\bbeta,\sigma^2|D_n) \mbox{ converges in total variation to}\\
&\phi(\bbeta_{\xi^*};\hat\bbeta_{\xi^*}, \sigma^2(\bX_{{\xi^*}}^T\bX_{{\xi^*}})^{-1})\prod_{j\notin\xi^*} \pi(\beta_j|\sigma^2)
ig\left(\sigma^2, \frac{n-s}{2}, \frac{\hat\sigma^2(n-s)}{2}\right),
\end{split}
\end{equation}
where $\phi(\cdot)$ is a multivariate normal density function,
$ig(\cdot)$ is an inverse-gamma density function,
 $\pi(\beta_j|\sigma^2)$ is the conditional prior distribution of $\beta_j$, and 
$\hat\bbeta_{\xi^*}$ and $\hat\sigma^2$ are, respectively, the maximum likelihood estimates (MLEs) of $\bbeta_{\xi^*}$ and $\sigma^2$ given data $(\by,\bX_{\xi^*})$.
\end{theorem}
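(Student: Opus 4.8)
The plan is to prove the factorization in \eqref{bvm2} by first collapsing the high-dimensional posterior onto the selected model $\xi^*$, then establishing a genuinely low-dimensional Bernstein--von Mises expansion on the active block $\bbeta_{\xi^*}$ while showing that the inactive block $\bbeta_{(\xi^*)^c}$ decouples from the likelihood and therefore retains its prior. By Theorem~\ref{variableselection}, with dominating probability the event $\mathcal{E}=\{\xi(\bbeta,\sigma^2)=\xi^*\}$, i.e.\ $\{|\beta_j/\sigma|\le a_n\text{ for all }j\notin\xi^*\}$, carries posterior mass $1-o(1)$; the target measure also concentrates on $\mathcal{E}$ up to an $O(p_n\cdot p_n^{-(1+u)})=p_n^{-u}=o(1)$ remainder (using $u>1$ and the symmetry of $g_\lambda$). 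Hence it suffices to control the total variation distance between the two measures renormalized on $\mathcal{E}$, which I would do by a Scheff\'e/density-ratio argument: show the posterior density equals the target density times $1+o(1)$ uniformly over $\mathcal{E}$, further restricted to the $O(\epsilon_n)$-neighborhood of $\bbeta^*_{\xi^*}$ where the normal factor lives.

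Second, I would decouple the likelihood. Writing $\|\by-\bX\bbeta\|^2=\|\by-\bX_{\xi^*}\bbeta_{\xi^*}\|^2-2(\by-\bX_{\xi^*}\bbeta_{\xi^*})^T\bX_{(\xi^*)^c}\bbeta_{(\xi^*)^c}+\|\bX_{(\xi^*)^c}\bbeta_{(\xi^*)^c}\|^2$, I must show the last two terms are negligible in the exponent on $\mathcal{E}$. The quadratic term is handled by $\lambda_{\max}(\bX^T\bX)\le np_n$ and $\|\bbeta_{(\xi^*)^c}\|^2\le p_n a_n^2\sigma^2$, giving $\|\bX_{(\xi^*)^c}\bbeta_{(\xi^*)^c}\|^2/\sigma^2\le np_n^2 a_n^2=o(1)$ precisely because $a_n\prec(1/p_n)\sqrt{1/(ns\log p_n)}$. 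Completing the square in $\bbeta_{\xi^*}$ around the least-squares fit $\hat\bbeta_{\xi^*}=(\bX_{\xi^*}^T\bX_{\xi^*})^{-1}\bX_{\xi^*}^T\by$ then produces exactly the Gaussian factor $\phi(\bbeta_{\xi^*};\hat\bbeta_{\xi^*},\sigma^2(\bX_{\xi^*}^T\bX_{\xi^*})^{-1})$, while the residual $\|\by-\bX_{\xi^*}\hat\bbeta_{\xi^*}\|^2$ feeds the $\sigma^2$ factor.

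Third, I would treat the prior, which factorizes as $\prod_{j\in\xi^*}[g_\lambda(\beta_j/\sigma)/\sigma]\cdot\prod_{j\notin\xi^*}[g_\lambda(\beta_j/\sigma)/\sigma]$; the second product is carried through verbatim and becomes $\prod_{j\notin\xi^*}\pi(\beta_j|\sigma^2)$. For the first, the strengthened flatness hypothesis $\lim s\log l_n=0$ gives $\prod_{j\in\xi^*}g_\lambda(\beta_j/\sigma)=\prod_{j\in\xi^*}g_\lambda(\beta_j^*/\sigma^*)(1+o(1))$ uniformly over the $O(\epsilon_n)$-ball, since the ratio is trapped in $[l_n^{-s},l_n^{s}]=[1-o(1),1+o(1)]$; thus the active-block prior acts as a constant and the conditional posterior of $\bbeta_{\xi^*}$ is asymptotically the claimed normal. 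Finally, marginalizing the $s$-dimensional Gaussian (a factor $\sigma^{s}$), integrating each inactive coordinate ($\int g_\lambda(\beta_j/\sigma)\,d\beta_j=\sigma$), and collecting all powers of $\sigma$ together with the $\mathrm{IG}(a_0,b_0)$ prior yields an inverse-gamma marginal for $\sigma^2$; one must then check that its shape and scale reproduce $ig(\sigma^2;(n-s)/2,\hat\sigma^2(n-s)/2)$ up to total-variation negligible corrections (the contributions of $a_0,b_0$ and the residual $O(s)$ drift in the shape parameter must be shown harmless in the present sparsity regime).

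The hardest step is controlling the linear cross term $(\by-\bX_{\xi^*}\bbeta_{\xi^*})^T\bX_{(\xi^*)^c}\bbeta_{(\xi^*)^c}/\sigma^2$ in the exponent: unlike the quadratic term it aggregates over $\sim p_n$ inactive coordinates, and a crude worst-case bound using $\max_j|\bX_j^T\bvarepsilon|=O_p(\sqrt{n\log p_n})$ only gives $O(1/\sqrt{s})$, which is not $o(1)$. Making this genuinely negligible is where the stringency of $a_n$ and the symmetry of the shrinkage prior about zero must be used together: since $\by-\bX_{\xi^*}\hat\bbeta_{\xi^*}=\sigma^*(I_n-\bX_{\xi^*}(\bX_{\xi^*}^T\bX_{\xi^*})^{-1}\bX_{\xi^*}^T)\bvarepsilon$, the linear functional has mean zero under the symmetric inactive prior, so its first-order contribution to the density ratio vanishes and only its variance matters; that variance is bounded by $(a_n^2/\sigma^2)\|\bX_{(\xi^*)^c}^T(\by-\bX_{\xi^*}\hat\bbeta_{\xi^*})\|^2\prec 1/(p_n s)=o(1)$, again thanks to the sharp bound on $a_n$. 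Pinning down this cancellation uniformly over $\mathcal{E}$, and then converting the resulting uniform density-ratio bound into the total-variation statement (adding back the $o(1)$ mass off $\mathcal{E}$), is the crux of the argument.
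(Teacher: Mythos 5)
Your proposal is correct in outline and follows essentially the same architecture as the paper's proof (Theorem \ref{BvM}): restrict to the event where every inactive coordinate satisfies $|\beta_j/\sigma|\le a_n$ and $\bbeta$ lies in an $O(\epsilon_n)$-ball around $\bbeta^*$ (justified by Theorems \ref{genthm} and \ref{vs}), use $\lim s\log l_n=0$ to treat the active-block prior as constant (the paper's factor $\pi_2$), decouple the inactive block from the likelihood (the paper's factor $\pi_3$), and match the remaining exact normal--inverse-gamma factor (the paper's $\pi_1$) to the target $\pi_0$. The one place you genuinely depart from the paper is the linear cross term, and your reason for departing rests on a misreading of the notation: the paper defines $a\prec b$ as $\lim a/b=0$, so the hypothesis $a_n\prec(1/p_n)\sqrt{1/(ns\log p_n)}$ makes the crude H\"older bound $\|\bX_{(\xi^*)^c}^T(I-H_{\xi^*})\bvarepsilon\|_\infty\,\|\bbeta_{(\xi^*)^c}\|_1=O_p(\sqrt{n\log p_n})\cdot p_na_n\sigma$ equal to $o_p(1/\sqrt s)=o_p(1)$, not merely $O(1/\sqrt s)$ as you assert; this is exactly the bound the paper uses (its term $O(\sqrt{n\log p_n}\,a_np_n)$ in the proof of Theorem \ref{BvM}), so no cancellation argument is needed. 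Your symmetry-plus-variance refinement is nevertheless sound: the exponent is uniformly bounded, so Cauchy--Schwarz converts the $o(1/(p_ns))$ second moment into an $L_1$ bound on $e^Z-1$, and this buys something real, namely that the argument survives even if $a_n$ is only of the same order as the threshold rate rather than strictly below it. If you carry it out, two caveats: (i) the cross term must be controlled at every $\bbeta_{\xi^*}$ in the $\epsilon_n$-ball, not only at $\hat\bbeta_{\xi^*}$, so the additional piece $(\hat\bbeta_{\xi^*}-\bbeta_{\xi^*})^T\bX_{\xi^*}^T\bX_{(\xi^*)^c}\bbeta_{(\xi^*)^c}$ needs the same treatment (the paper absorbs the analogous term $O(\sqrt n\epsilon_n\cdot\sqrt n\,a_np_n)$ using $na_np_n\prec1/\epsilon_n$, which is equivalent to the stated condition on $a_n$); and (ii) your cancellation step relies on symmetry of $g_\lambda$ about zero, which is not among the hypotheses of Theorem \ref{bvmthm} or of condition (\ref{priorconcentrationeq}) --- the paper's crude bound needs no such assumption, so to prove the theorem as stated you would have to fall back on it anyway.
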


 Refer to Theorem \ref{BvM} for the proof of this theorem. 
 Its condition is slightly stronger than that of Theorem \ref{variableselection}. 
 It requires that $a_n$ is smaller and the prior log-density $\log g_\lambda(\cdot)$ 
 is almost constant 
 around the true value of $\beta_j^*/\sigma^*$. 
The following corollary can be easily derived from the above theorem.
\begin{corollary}\label{bvmthm2}
Under the condition of Theorem \ref{bvmthm}, 
for any $j\in\xi^*$, the marginal posterior of $\beta_j$
converges to normal distribution $\phi(\beta_j, \hat\beta_j, \sigma^{*2}\sigma_j)$,
where $\hat\beta_j$ is the $j$th entry of $\hat\bbeta_{\xi^*}$,
$\sigma_j=[(\bX_{{\xi^*}}^T\bX_{{\xi^*}})^{-1}]_{j,j}$.
Furthermore, if $s\prec \sqrt n$,
the posterior $\pi(\bbeta_{\xi^{*c}},\bbeta_{\xi^*},\sigma^2|D_n)$ converges in total variation to
\[
\prod_{j\notin\xi^*} \pi(\beta_j|\sigma^2)
\phi\left(\btheta ;\hat\btheta, (n\hat I)\right), \mbox{ with probability approaching 1,}
\]
where $\btheta=(\bbeta_{\xi^*},\sigma^2)^T$,
$\hat\btheta=(\hat\bbeta_{\xi^*}$, $\hat\sigma^2)^T$, and 
$(n\hat I)^{-1}=\mbox{diag}\left(\hat\sigma^2(\bX_{{\xi^*}}^T\bX_{{\xi^*}})^{-1}, 2\hat\sigma^4/n
 \right)$.
 In other words, the BvM theorem holds for the parameter component $(\bbeta_{\xi^*},\sigma^2)$.
\end{corollary}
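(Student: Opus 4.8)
The plan is to treat Corollary~\ref{bvmthm2} as a deterministic post-processing of the total-variation limit already supplied by Theorem~\ref{bvmthm}. On the $P^*$-probability-tending-to-one event on which that theorem holds, the joint posterior of $(\bbeta,\sigma^2)$ lies within $o(1)$ total variation of the reference law
\[
\phi(\bbeta_{\xi^*};\hat\bbeta_{\xi^*},\sigma^2\Sigma_n)\prod_{j\notin\xi^*}\pi(\beta_j|\sigma^2)\,ig\!\left(\sigma^2,\tfrac{n-s}{2},\tfrac{\hat\sigma^2(n-s)}{2}\right),\qquad \Sigma_n=(\bX_{\xi^*}^T\bX_{\xi^*})^{-1}.
\]
Since marginalization and any further deterministic total-variation estimates only contract the distance, it suffices to bound the total variation between this reference law and each of the two target laws in the statement; the ``with probability approaching one'' qualifier is then inherited from Theorem~\ref{bvmthm} and from the consistency of $\hat\sigma^2$.

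For the marginal statement I would marginalize the reference law to a single coordinate $\beta_j$, $j\in\xi^*$. The false-coordinate factors integrate to one, and integrating the $s$-dimensional Gaussian over $\bbeta_{\xi^*\setminus\{j\}}$ leaves $\beta_j\mid\sigma^2\sim N(\hat\beta_j,\sigma^2\sigma_j)$ with $\sigma_j=[\Sigma_n]_{jj}$. Averaging this over $\sigma^2\sim ig(\cdot;(n-s)/2,\hat\sigma^2(n-s)/2)$ produces exactly a univariate Student-$t$ law centred at $\hat\beta_j$, with scale $\hat\sigma^2\sigma_j$ and $n-s$ degrees of freedom. As $n-s\to\infty$ this $t$ converges in total variation to $N(\hat\beta_j,\hat\sigma^2\sigma_j)$ at rate $O(1/n)$, and replacing $\hat\sigma^2$ by $\sigma^{*2}$ costs only the total variation between two one-dimensional normals with variance ratio $\hat\sigma^2/\sigma^{*2}\to 1$, which vanishes by consistency of the oracle residual-variance estimator. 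This yields $\phi(\beta_j;\hat\beta_j,\sigma^{*2}\sigma_j)$ and, importantly, requires no restriction on $s$ beyond those in Theorem~\ref{bvmthm}.

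For the joint BvM statement I would pass from the reference law to the product target $\prod_{j\notin\xi^*}\pi(\beta_j|\sigma^2)\,\phi(\btheta;\hat\btheta,(n\hat I)^{-1})$ in two steps joined by the triangle inequality, with the common factor $\prod_{j\notin\xi^*}\pi(\beta_j|\sigma^2)$ simply riding along. First, replace the inverse-gamma marginal of $\sigma^2$ by $N(\hat\sigma^2,2\hat\sigma^4/n)$: the inverse-gamma with shape $(n-s)/2$ and scale $\hat\sigma^2(n-s)/2$ has mean $\hat\sigma^2(1+o(1))$ and variance $(2\hat\sigma^4/n)(1+o(1))$, and its total-variation distance to the matched normal is $O(1/\sqrt n)$. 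Second, decouple $\bbeta_{\xi^*}$ from $\sigma^2$ by replacing the $\sigma^2$-dependent conditional covariance $\sigma^2\Sigma_n$ with the fixed $\hat\sigma^2\Sigma_n$, which turns the dependent Gaussian--inverse-gamma block into the block-diagonal Gaussian $\phi(\btheta;\hat\btheta,(n\hat I)^{-1})$ with $(n\hat I)^{-1}=\mathrm{diag}(\hat\sigma^2\Sigma_n,2\hat\sigma^4/n)$.

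The hard part is this second step, and it is exactly where the dimension $s$ enters. Writing both blocks as the common $\sigma^2$-marginal times differing conditionals, the cost is the $\sigma^2$-averaged total-variation distance between the $s$-dimensional normals $N(\hat\bbeta_{\xi^*},\sigma^2\Sigma_n)$ and $N(\hat\bbeta_{\xi^*},\hat\sigma^2\Sigma_n)$, i.e.\ the discrepancy between an $s$-dimensional scale mixture and a fixed-scale Gaussian. Diagonalizing $\Sigma_n$ turns each into a product of $s$ univariate normals whose variances differ by the common factor $1+\delta$, $\delta=\sigma^2/\hat\sigma^2-1$; bounding the total variation of the product by the sum of the $s$ coordinatewise distances gives a bound of order $s|\delta|$, and averaging against the posterior of $\sigma^2$, for which $E|\delta|\asymp n^{-1/2}$, yields order $s/\sqrt n$. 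Requiring this to vanish is precisely the extra hypothesis $s\prec\sqrt n$. Collecting the three estimates and invoking $\hat\sigma^2\to\sigma^{*2}$ in $P^*$-probability gives the stated total-variation convergence with probability approaching one, from which the marginal-$\beta_j$ and joint-$(\bbeta_{\xi^*},\sigma^2)$ conclusions are read off.
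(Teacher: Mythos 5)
Your proof is correct and follows exactly the route the paper intends: the paper gives no separate proof of Corollary \ref{bvmthm2}, asserting only that it can be easily derived from Theorem \ref{bvmthm}, and your marginalization/decoupling post-processing of the total-variation limit (Student-$t$ to normal for the marginal, inverse-gamma to normal plus covariance decoupling for the joint) is precisely that derivation. Your accounting of the decoupling cost of order $s/\sqrt{n}$ also correctly pinpoints why the extra hypothesis $s \prec \sqrt{n}$ appears only in the joint statement.
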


Theorem \ref{bvmthm} is comparable to the result developed under the spike-and-slab prior \cite{CastilloSHV2015}. Under the spike-and-slab prior, the posterior density of $\bbeta$ 
 can be rewritten as a mixture,
\begin{equation}\label{bvmps0}
  \pi(\bbeta|D_n) =\sum_{\xi\subset{\{1,\dots,p\}}} \pi(\xi|D_n) \pi(\bbeta_{\xi}|\bX_\xi,\by)
  1\{\bbeta_{\xi^c}=0\},
\end{equation}
where $\pi(\bbeta_{\xi}|\bX_\xi,\by)\propto h_{\xi}(\bbeta_{\xi})f(\bbeta_{\xi};\bX_{\xi},\by)$, and  
$h_{|\xi|}(\cdot)$ is defined in (\ref{mixprior}). If
$\pi(\xi^*|D_n)\rightarrow 1$, $\pi(\bbeta|D_n)$ converges to $\pi(\bbeta_{\xi^*}|\bX_{\xi^*},\by)
1\{\bbeta_{{\xi^*}^c}=0\}$. 
Furthermore, if $\pi(\bbeta_{\xi^*})$ is sufficiently flat and BvM holds for the  low-dimensional term $\pi(\bbeta_{\xi^*}|\bX_{\xi^*},\by)$, then
it leads to a posterior normal approximation as
\begin{equation}\label{bvmps}
\pi(\bbeta|D_m)\approx\mbox{N}(\bbeta_{\xi^*}; \hat\bbeta_{\xi^*}, (\bX_{\xi^*}^T\bX_{\xi^*})^{-1})\otimes \delta_{0}(\bbeta_{(\xi^*)^{c}}),
\end{equation}
where $\otimes$ denotes product of measure.
   


 Theorem \ref{bvmthm} and Corollary \ref{bvmthm2} extend the BvM-type result from the spike-and-slab 
 prior to the shrinkage prior. They show that the 
 marginal posterior distribution for the true covariates follows 
 the BvM theorem as if under the low dimensional setting, 
 while the marginal posterior for the false covariates can be approximated by its
 prior distribution. Since the prior distribution is already highly concentrated,
 the posterior of the false covariates being almost the same as the prior does not contradict our contraction results.
 Note that Bayesian procedure can be viewed as a process of updating the probabilistic knowledge
 of parameters. The concentrated prior distribution reflects our prior belief that almost all 
 the predictors are inactive, and (\ref{bvm2}) can be interpreted as that
the Bayesian procedure correctly identifies the true model $\xi^*$ and updates the distribution of $\bbeta_{\xi^*}$ using the data, but it obtains no evidence to support $\beta_j\neq 0$ for any $j\notin \xi^*$ and thus doesn't update their concentrated prior distributions.

Let $CI_i(\alpha)$ denote the posterior quantile credible interval of the $i$th covariate. 
If $\pi(\beta|\sigma^2)$ is a symmetric distribution, 
then Corollary \ref{bvmthm2} implies that
\begin{equation}\label{ci}
\begin{split}
&\lim P^*(\beta_i^*\in CI_i(\alpha) ) = 1-\alpha,\, \mbox{if $i\in\xi^*$  and }\\
&\lim P^*( 0 \in CI_i(\alpha)) =1,\, \mbox{if $i\notin\xi^*$, }
\end{split}
\end{equation}
 for any $1>\alpha>0$. This result implies that for the false covariates, the Bayesian credible interval  
 is super-efficient:  Asymptotically, it can be very narrow (as the prior is highly concentrated), 
  but has always 100\% probability coverage. This is much different from the confidence interval. 
 
It is important to note that both Theorem \ref{bvmthm} and its counterpart (\ref{bvmps}) rely on selection consistency (and beta-min condition),  which drives Bayesian post-selection inference. Therefore, the frequentist coverage of the 
Bayesian credible interval (first equation of (\ref{ci})) does not hold uniformly for all nonzero $\beta_i$ values, but
only hold for those bounded away from 0. If the beta-min condition is violated, one can 
rewrite the posterior with the shrinkage prior as a mixture distribution similar to (\ref{bvmps0}).
Hence, the corresponding posterior inference will be model-average-based.

The above asymptotic studies are completely different from the frequentist 
sampling distribution-based inference tools such as de-biased Lasso \cite{VanBRD2014,ZhangZ2014}.
The de-biased Lasso method established asymptotic normality as
 \begin{equation}\label{dlasso2}
\sqrt{n}(\hat\bbeta-\bbeta^*)\stackrel{d}{\rightarrow}\mbox{N}(0, \sigma^{*2}S\bX^T\bX S^{T}/n),
\end{equation}
for any $\bbeta^*$, even when it is arbitrary closed to zero, and
 $S$ is some surrogate inverse matrix of the sample covariance.
Different from our posterior consistency result, the asymptotic distribution in the right hand 
side of (\ref{dlasso2}) is a divergent distribution when $p_n\gg n$.

\textcolor{black}{In the literature, there is a different line of researches about the validity of Bayesian credible intervals, which do not require selection consistency, see e.g. \cite{VanSV2017_2, belitser2020empirical}. These works are usually based on the first-order Bayesian convergence rate only. As a consequence, these credible intervals/balls involve an unknown multiplicative constants (e.g., $c_1$ and $M$) that appear in the posterior convergence rate (\ref{postconsist12}) and their coverage always converges to 1, rather than the nominal level $1-\alpha$.}



  We conjecture that if consistent point estimation and 
  inference of credible intervals are made simultaneously, the credible intervals will 
  be super-efficient for the false covariates due to 
  the sparsity constraints (i.e. the prior distribution) imposed on the regression coefficients. 
  These constraints ensure  posterior consistency and thus reduces the 
  variability of the coefficients of the false covariates. Based on this understanding, 
  it seems that under the framework  
  of consistent high-dimensional Bayesian analysis, 
   a separate post-selection inference procedure (without sparsity constraints) 
  is necessary to induce the correct second-order inference. For example, it can be done 
  in a sequential manner (refer the idea to \cite{LockhartTTT2014} and \cite{TibshiraniTLT2014}): Attempting to add each of the unselected variables 
  to the selected model, and calculating the corresponding credible interval 
  for the unselected variable. 

\section{Consistent Shrinkage Priors}\label{ext}

In the previous section, we establish general theory for shrinkage priors based on
abstract conditions. In this section, we will apply the theory to several types of 
shrinkage priors, and study the corresponding posterior asymptotics.

 
 The condition (\ref{priorconcentrationeq}) requires certain balance between
 prior concentration and tail thickness. First of all, it is easy to see that
 the Laplace prior fails to satisfy condition (\ref{priorconcentrationeq}) unless the tuning
 parameter  $\lambda_n\sim p_n\log p_n/\sqrt{\frac{s\log p_n}{n}}$ and 
 the true coefficients are as tiny as $|\beta_j^*|=O(\sqrt{s\log p_n/n}/p_n)$ for all $j\in \xi^*$.
Therefore, we first consider the prior specification that has a heavier tail than the exponential distribution.

\subsection{Polynomial-tailed Distributions}

We assume that the prior density of $\bbeta$ has the form  
$\pi(\bbeta|\sigma^2)=\prod_{i=1}^{p_n} \frac{1}{\lambda_n\sigma}g(\beta_i/\lambda_n\sigma)$, 
where $\lambda_n$ is a scale hyperparameter, and the density $g(\cdot)$ is symmetric and polynomial-tailed, i.e.
$g(x)\asymp x^{-r} $ as $|x|\rightarrow \infty$ for some positive $r>1$. 
Under the above prior specification, we adapt  Theorem \ref{thmmain} as follows:

\begin{theorem}\label{betaprior}
Assume conditions $A_1$ and $A_2$ hold for the linear regression model, and 
a polynomial-tailed prior is used. 
If $\log(E_n)=O(\log p_n)$, and the scale parameter $\lambda_n$ satisfies
 $\lambda_n\leq a_np_n^{-(u+1)/(r-1)}$ and  $-\log\lambda_n = O(\log p_n)$
 for some $u>0$,  then 
\begin{itemize}
 \item the posterior consistency (\ref{postconsist12})
holds when $a_n\asymp \sqrt{s\log p_n/n}/p_n$;
 \item the model selection consistency (\ref{postconsistvs}) holds when
 $a_n\prec \sqrt{\log p_n}/\sqrt{n}p_n$, 
 $\min_{j\in\xi^*}|\beta_j^*|\geq M_1\sqrt{\log p_n/n}$ for sufficiently large $M_1$,
 $s\log l_n\prec\log p_n$ and $u>1$;
 \item the posterior approximation (\ref{bvm2}) holds if
$a_n\prec \sqrt{1/(ns\log p_n)}/p_n$,
 $\min_{j\in\xi^*}|\beta_j^*|\geq M_1\sqrt{\log p_n/n}$ for sufficiently large $M_1$, $s\log l_n\prec 1$, and $u>1$.
\end{itemize}
\end{theorem}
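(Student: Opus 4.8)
The plan is to verify that the polynomial-tailed prior satisfies the abstract hypotheses of Theorems \ref{thmmain}, \ref{variableselection} and \ref{bvmthm}, and then invoke those theorems verbatim. Writing the marginal prior density in the form (\ref{prior0}) identifies $g_\lambda(x) = \lambda_n^{-1} g(x/\lambda_n)$, so every abstract quantity reduces, after the change of variables $t = x/\lambda_n$, to an integral or supremum of the fixed density $g$ whose tail behavior $g(t)\asymp t^{-r}$ is known.

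First I would check the prior concentration inequality (the first line of (\ref{priorconcentrationeq})). By symmetry and the substitution $t=x/\lambda_n$,
\begin{equation*}
1-\int_{-a_n}^{a_n} g_\lambda(x)\,dx = 2\int_{a_n/\lambda_n}^{\infty} g(t)\,dt.
\end{equation*}
The scaling condition $\lambda_n \leq a_n p_n^{-(u+1)/(r-1)}$ forces $a_n/\lambda_n \geq p_n^{(u+1)/(r-1)}\to\infty$, so the integral lies in the polynomial-tail regime and $\int_{a_n/\lambda_n}^\infty g(t)\,dt \asymp (r-1)^{-1}(a_n/\lambda_n)^{-(r-1)}$. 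Requiring $(a_n/\lambda_n)^{-(r-1)} \leq p_n^{-(u+1)}$ is then exactly the stated scaling bound, so the concentration inequality holds; the only care needed here is that the equivalence $g(t)\asymp t^{-r}$ be integrable on the tail, which is immediate since $r>1$.

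Next I would verify the tail-thickness inequality (the second line of (\ref{priorconcentrationeq})). Since $g$ is symmetric and polynomial-tailed, its mass sits near the origin and the infimum of $g_\lambda$ over $[-E_n,E_n]$ is controlled by its value near the endpoints, giving $\inf_{x\in[-E_n,E_n]}g_\lambda(x) \gtrsim \lambda_n^{-1}(E_n/\lambda_n)^{-r} = \lambda_n^{r-1}E_n^{-r}$, whence $-\log\inf_{x\in[-E_n,E_n]} g_\lambda(x) \lesssim (r-1)(-\log\lambda_n) + r\log E_n$. Both $-\log\lambda_n$ and $\log E_n$ are $O(\log p_n)$ by hypothesis, so this quantity is $O(\log p_n)$ and the condition holds. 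With both inequalities established at $a_n\asymp\sqrt{s\log p_n/n}/p_n$, Theorem \ref{thmmain} yields the first bullet.

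For the second and third bullets I would rerun the same two computations with the smaller prescribed $a_n$ (which only tightens the required $\lambda_n$, still within the stated scaling), and in addition control the flatness measure $l_n$. Differentiating, $[\log g_\lambda]'(x) = \lambda_n^{-1}g'(x/\lambda_n)/g(x/\lambda_n) \approx -r/x$ on the relevant range, so a first-order expansion gives $\log l_n \asymp \max_{j\in\xi^*} r\,\epsilon_n/|\beta_j^*/\sigma^*|$; the abstract flatness hypotheses $s\log l_n\prec\log p_n$ and $s\log l_n\prec 1$ then become the sparsity and signal conditions appearing in the two bullets, and together with $u>1$ and the beta-min condition they let me invoke Theorem \ref{variableselection} for selection consistency and Theorem \ref{bvmthm} for the shape approximation. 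I expect the main obstacle to be bookkeeping rather than anything deep: ensuring that the three distinct $a_n$ regimes, the single scaling constraint $\lambda_n\leq a_n p_n^{-(u+1)/(r-1)}$, and the flatness estimate hold simultaneously, and that the endpoint infimum and the tail expansion are justified uniformly over $j\in\xi^*$ rather than merely pointwise.
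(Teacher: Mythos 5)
Your proposal is correct and follows essentially the same route as the paper's proof: after the change of variables $t=x/\lambda_n$, bound the tail mass by $\bar c(r-1)^{-1}(a_n/\lambda_n)^{-(r-1)}\leq O(p_n^{-(1+u)})$ using the scaling condition, bound $\inf_{[-E_n,E_n]}g_\lambda$ from below by the endpoint value $\asymp\lambda_n^{r-1}E_n^{-r}$ so that its negative logarithm is $O(\log p_n)$, and then invoke Theorems \ref{thmmain}, \ref{variableselection} and \ref{bvmthm}. The only deviation is your derivative-based estimate of $\log l_n$, which is superfluous for this theorem --- in Theorem \ref{betaprior} the flatness conditions $s\log l_n\prec\log p_n$ and $s\log l_n\prec 1$ are hypotheses passed through verbatim to the abstract theorems, and the estimate $\log l_n\asymp \epsilon_n/\min_{j\in\xi^*}|\beta_j^*/\sigma^*|$ (which needs the extra regularity condition (\ref{L})) is really the content of Corollary \ref{colheavy}.
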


Note that polynomially decaying distributions that we most commonly used satisfy
$g(x) = Cx^{-r}L(x)$, where $\lim_x L(x) =1$ with the rate
\begin{equation}
 |L(x)- 1| = O( x^{-t}) \mbox{ for some }t\geq 0. \label{L}
\end{equation}
It is not difficult to see that if $\min_{j\in\xi^*}|\beta_j^*|>M_2\epsilon_n$ for some large $M_2$,
$\lambda_n=O(\epsilon_n)$, then
$s\log l_n \asymp s\epsilon_n/\min_{j\in\xi^*}|\beta_j^*|. $
Therefore, Theorem \ref{betaprior} can be refined as
follows.


\begin{corollary}\label{colheavy}
 Consider the polynomial-tailed prior distributions satisfying (\ref{L}). Assume condition $A_1$
 holds,  $s\log p_n\prec n$, and $\log(\max_{j\in\xi^*}|\beta_j^*|) = O(\log p_n)$.
 Let the choice of $\lambda_n$ satisfy $-\log \lambda_n = O(\log p_n)$, then
\begin{itemize}
 \item If $\lambda_n = O\{\sqrt{s\log p_n/n}\big/p_n^{(u+r)/(r-1)}\}$ with $u>0$,
 then posterior consistency holds with a nearly optimal contraction rate;
 \item If $s\sqrt{s\log p_n/n}/\min_{j\in\xi^*}|\beta_j^*|\prec\log p_n$, $\lambda_n\prec\sqrt{\log p_n/ n}\big/p^{(u+r)/(r-1)}$ with $u>1$, 
 $\min_{j\in\xi^*}|\beta_j^*|\geq M_1\sqrt{s\log p_n/n}$ for sufficiently large $M_1$, then the variable selection consistency holds;
 \item If $s\sqrt{s\log p_n/n}/\min_{j\in\xi^*}|\beta_j^*|\prec 1$, 
  $\lambda_n\prec \sqrt{1/n\log p_n}\big/p_n^{(u+r)/(r-1)}$ with $u>1$, and $\min_{j\in\xi^*}|\beta_j^*|\geq M_1\sqrt{s\log p_n/n}$ for sufficiently large $M_1$, 
 then the posterior shape approximation holds.
\end{itemize}
\end{corollary}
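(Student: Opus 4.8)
The plan is to obtain Corollary \ref{colheavy} directly from Theorem \ref{betaprior} by specializing its abstract hypotheses to the regularly-varying family (\ref{L}); no fresh probabilistic argument is needed, since the posterior statements themselves are already proved there. Two reductions carry the whole proof: translating the scale condition $\lambda_n \leq a_n p_n^{-(u+1)/(r-1)}$ into the explicit bounds on $\lambda_n$ stated in each bullet, and evaluating the flatness measure $l_n$ so that the abstract hypothesis $s\log l_n \prec \log p_n$ (resp. $s\log l_n \prec 1$) turns into the displayed gap condition on $\min_{j\in\xi^*}|\beta_j^*|$. The conditions $-\log\lambda_n = O(\log p_n)$ and $\log(\max_{j\in\xi^*}|\beta_j^*|) = O(\log p_n)$ are carried over verbatim (the latter matching $\log E = O(\log p_n)$ upon taking $E \asymp \max_{j\in\xi^*}|\beta_j^*/\sigma^*|$, using that $\sigma^*$ is fixed), and the requirements $u>0$, $u>1$ are inherited unchanged.

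For the first reduction I would simply substitute, in each regime, the value of $a_n$ dictated by the governing theorem. For posterior consistency $a_n \asymp \sqrt{s\log p_n/n}/p_n$, so that
\[
a_n p_n^{-(u+1)/(r-1)} \asymp \sqrt{s\log p_n/n}\, p_n^{-1-(u+1)/(r-1)} = \sqrt{s\log p_n/n}/p_n^{(u+r)/(r-1)},
\]
using the exponent identity $1 + (u+1)/(r-1) = (u+r)/(r-1)$. The identical one-line computation turns $a_n \prec \sqrt{\log p_n/n}/p_n$ and $a_n \prec \sqrt{1/(ns\log p_n)}/p_n$ into the bounds of the second and third bullets, respectively.

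The second and more delicate reduction is the evaluation of $l_n$. Writing $g_\lambda(x) = g(x/\lambda_n)/\lambda_n$ and using $g(y) = Cy^{-r}L(y)$ for $|y|$ large, I would first note that the beta-min bound $\min_{j\in\xi^*}|\beta_j^*| \geq M_1\sqrt{s\log p_n/n}$ with $M_1$ large keeps the whole interval $\beta_j^*/\sigma^* \pm c_0\epsilon_n$ bounded away from zero and of order $|\beta_j^*/\sigma^*|$; together with $\lambda_n \prec \epsilon_n$ this pushes the arguments $x_i/\lambda_n$ deep into the tail where the expansion (\ref{L}) applies. Then for $x_1,x_2$ in that window,
\[
\log\frac{g_\lambda(x_1)}{g_\lambda(x_2)} = -r\log\frac{x_1}{x_2} + \log\frac{L(x_1/\lambda_n)}{L(x_2/\lambda_n)}.
\]
The first term is $O(|x_1-x_2|/|\beta_j^*/\sigma^*|) = O(\epsilon_n/|\beta_j^*|)$, while (\ref{L}) makes the second term $O((\lambda_n/\min_{j\in\xi^*}|\beta_j^*|)^t)$, which is polynomially small in $p_n$ and hence negligible. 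Maximizing over $j\in\xi^*$ gives $\log l_n \asymp \epsilon_n/\min_{j\in\xi^*}|\beta_j^*|$, so $s\log l_n \asymp s\sqrt{s\log p_n/n}/\min_{j\in\xi^*}|\beta_j^*|$; this is exactly why $s\log l_n \prec \log p_n$ reads as the selection condition and $s\log l_n \prec 1$ as the shape condition.

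The main obstacle is this flatness estimate: one must ensure that the regularly-varying remainder $L$ does not spoil the clean polynomial behaviour across the entire $c_0\epsilon_n$-window, which is precisely where the beta-min condition (placing the window in the tail) and the smallness of $\lambda_n$ enter; for $t>0$ the $L$-contribution vanishes polynomially faster than the leading $\epsilon_n/|\beta_j^*|$ term, and for the borderline $t=0$ it is at worst of the same order and so does not enlarge $\log l_n$. Once $\log l_n$ is pinned down, each of the three bullets follows by reading off the hypotheses of Theorem \ref{betaprior} in the corresponding regime.
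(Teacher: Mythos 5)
Your proposal is correct and takes essentially the same route as the paper: the paper obtains this corollary from Theorem \ref{betaprior} by exactly your two reductions, namely substituting the regime-specific $a_n$ into the scale condition $\lambda_n \leq a_n p_n^{-(u+1)/(r-1)}$ (via the same exponent identity $1+(u+1)/(r-1)=(u+r)/(r-1)$) and evaluating the flatness measure as $s\log l_n \asymp s\epsilon_n/\min_{j\in\xi^*}|\beta_j^*|$ under the beta-min condition and $\lambda_n = O(\epsilon_n)$. If anything, your treatment of the slowly-varying remainder $L$ in (\ref{L}) is more explicit than the paper's, which asserts the flatness evaluation with only an ``it is easy to see.''
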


Theorem \ref{betaprior} and Corollary \ref{colheavy} show that a nearly optimal contraction rate
can be achieved for high-dimensional linear regression by adopting a polynomial-tailed prior with an appropriate value of $\lambda_n$.
As suggested by Corollary  \ref{colheavy}, it is sufficient to choose the scale parameter as
$\log\lambda_n\sim -c \log p_n$ for some $c\ll (u+r)/(r-1)$, since $n=O(p_n)$ and $s=o(p_n)$.
Compared to the choice $\lambda_n = (s/p)\sqrt{\log(p/s)}$ under normal means models \cite{VanKV2014,GhoshC2014},
we note that a stronger prior concentration is required for regression models.
Our results allow the maximum magnitude of nonzero coefficients to increase up to a polynomial
of $p_n$. In contrast, the DL prior allows  $|\beta_j^*|$ to increase with a logarithmic order of $n$ only \cite{BhattacharyaPPD2015}.
It is worth to note that the bounded condition on $|\beta_j^*|$ is not necessary when a polynomially decaying prior under normal means models, i.e., when $\bX=I$ \cite{VanKV2014,GhoshC2014,song2020bayesian}. However, under general regression settings,
such condition may be necessary due to the dependency among covariates.
One should also notice that selection consistency or posterior normality requires stronger beta-min condition (minimal $\beta^*$ is greater than the order of $\sqrt{s\log p_n/n}$) and an 
additional condition on the true sparsity $s$ (e.g., if $\min_{j\in\xi^*}|\beta_j^*|>C$ for some
constant $C$, selection consistency and posterior normality require $s^3\prec n\log^2p_n$ and $s^3\prec n/\log p_n$ respectively). 
The reason we need such unpleasant conditions is that the polynomially decaying prior 
modeling utilizes only one scale hyperparameter.
Although this simplifies the modeling part, 
we lose control on the shape or tail flatness of the prior distribution.
If we utilize both scale and shape hyperparameters in  prior modeling, the conditions can be improved, as seen in  Section \ref{ext3}.

For the convenience of posterior sampling, one way to construct polynomially decaying 
prior is to design a hierarchical scale mixture Gaussian distribution as 
\begin{equation}\label{prior4th}
 \beta_j\sim \mbox{N}(0, \lambda_j^2\sigma^2), \quad \lambda_j^2 \sim \pi_{s_n}(\lambda_j^2),\quad
 \mbox{independently for all }j,
\end{equation}
where $s_n$ is the scale hyperparameter of the mixing distribution $\pi_{s_n}(\cdot)$, i.e., 
$\pi_{s_n}(\cdot) = \pi_1(\cdot/s_n)/s_n$. Equivalently,
 $\sqrt{s_n}$ is the scale parameter of the marginal prior of $\beta_j$. The scale mixture
Gaussian distribution can also be viewed as a local-global shrinkage prior, where $\lambda_j^2$'s 
are local shrinkage parameters, and $s_n$ is a deterministic global shrinkage parameter. 
As shown in the next lemma, the tail behavior of the marginal distribution of $\beta_j$ is determined
by the tail behavior of $\pi_1$.

\begin{lemma}
If the mixing distribution $\pi_{s_n}(\cdot)$ is a polynomial-tailed distribution satisfying
$\pi_1(\lambda^2) = C\lambda^{-2\tilde r}\tilde L(\lambda^2) $ 
 and $|\tilde L(\lambda^2)-1|=O((\lambda^2)^{-\tilde t})$, 
 then the marginal prior distribution of $\beta_j$ induced by (\ref{prior4th}) is 
 polynomial-tailed with order $2\tilde r-1$ and satisfies $|L(x)-1|=O(x^{-2\tilde t})$, 
 where $L$ is defined in (\ref{L})).
\end{lemma}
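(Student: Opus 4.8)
The plan is to compute the marginal density of $\beta_j$ as a Gaussian scale mixture and read off its tail asymptotics directly. Since (\ref{prior4th}) shows that the marginal law of $\beta_j$ is merely a rescaling (by $\sqrt{s_n}\,\sigma$) of the unit-scale version, it suffices to analyze, writing $v=\lambda_j^2$,
\[
 g(x) = \int_0^\infty \frac{1}{\sqrt{2\pi v}}\exp\Big(-\frac{x^2}{2v}\Big)\pi_1(v)\,dv,\qquad \pi_1(v)=Cv^{-\tilde r}\tilde L(v).
\]
First I would apply the change of variables $u=x^2/v$, which turns the Gaussian kernel into $e^{-u/2}$ and extracts the power of $x$ cleanly, giving
\[
 g(x)=\frac{C}{\sqrt{2\pi}}\,|x|^{-(2\tilde r-1)}\int_0^\infty u^{\tilde r-3/2}e^{-u/2}\,\tilde L(x^2/u)\,du.
\]
This already displays the claimed tail order $r=2\tilde r-1$, provided the $u$-integral tends to a finite positive limit.

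Next I would pin down the leading constant. As $|x|\to\infty$, for each fixed $u$ one has $x^2/u\to\infty$, hence $\tilde L(x^2/u)\to1$; dominated convergence then yields $\int_0^\infty u^{\tilde r-3/2}e^{-u/2}\,du=2^{\tilde r-1/2}\Gamma(\tilde r-1/2)$, which is finite and positive because $\tilde r>1/2$ (implied by $r>1$). This confirms $g(x)\asymp|x|^{-(2\tilde r-1)}$ and hence the polynomial order $2\tilde r-1$.

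For the convergence rate I would write the slowly-varying factor $L(x)-1$ as proportional to the deviation integral $\int_0^\infty u^{\tilde r-3/2}e^{-u/2}[\tilde L(x^2/u)-1]\,du$ and insert the hypothesis $|\tilde L(v)-1|=O(v^{-\tilde t})$, which gives the pointwise bound $|\tilde L(x^2/u)-1|=O(x^{-2\tilde t}u^{\tilde t})$. Since $\int_0^\infty u^{\tilde r-3/2+\tilde t}e^{-u/2}\,du<\infty$, the deviation integral is $O(x^{-2\tilde t})$, so $|L(x)-1|=O(x^{-2\tilde t})$, as claimed.

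The main obstacle is that both the polynomial form $\pi_1(v)\asymp Cv^{-\tilde r}$ and the rate bound on $\tilde L$ are only guaranteed for large $v$, i.e. small $u$, while the integrals above run over all $u$. I would handle this by truncating at a fixed $v_0$: on $\{v\le v_0\}$, equivalently $\{u\ge x^2/v_0\}$, the Gaussian factor is bounded by $e^{-x^2/(2v_0)}$, so this region contributes only an exponentially small term, negligible against both the $|x|^{-(2\tilde r-1)}$ leading order and the $O(x^{-2\tilde t})$ correction; on the complementary region the asymptotics for $\pi_1$ and $\tilde L$ are valid and the estimates above apply verbatim. Finally, reinstating $\sigma$ and $s_n$ only rescales the argument $x$, leaving both the tail order and the $O(\cdot)$ rate unchanged.
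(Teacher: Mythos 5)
Your proof is correct. Note that the paper does not actually supply a proof of this lemma --- it states ``The proof of this lemma is trivial and hence omitted'' --- and your change-of-variables computation ($u = x^2/v$, dominated convergence for the leading Gamma-integral constant, and the pointwise bound $|\tilde L(x^2/u)-1| = O(x^{-2\tilde t}u^{\tilde t})$ for the rate) is exactly the standard argument the authors are alluding to. One cosmetic point: in your truncation step it is the full Gaussian density $\frac{1}{\sqrt{2\pi v}}e^{-x^2/(2v)}$, not just the exponential factor, that should be bounded on $\{v \le v_0\}$; this follows because that density is increasing in $v$ on $(0, x^2]$, so it is at most $\frac{1}{\sqrt{2\pi v_0}}e^{-x^2/(2v_0)}$ once $|x|^2 \ge v_0$, which keeps the small-$v$ contribution exponentially small without requiring integrability of $v^{-1/2}\pi_1(v)$ near the origin.
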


The proof of this lemma is trivial and hence omitted in this paper.


 Combining the above lemma and  Corollary \ref{colheavy}, 
 it is sufficient to assign $\lambda_j^2$ a polynomial-tailed distribution and properly choose the 
 scale parameter $s_n$ such that $\sqrt{s_n}$ is decreasing and satisfies the conditions in Corollary \ref{colheavy}.
 \cite{GhoshC2014} studied the posterior convergence of the normal means models with 
 a scale mixture Gaussian prior (\ref{prior4th})
  and achieved  a minimax contraction rate.
  However, their result is only applicable to the case that the polynomial order $\tilde r$ of 
 $\pi_1(\lambda_j^2)$ is between 1.5 and 2. 
 Our result is more general 
 and valid for any $\tilde r>1$.

 In what follow, we list some examples of polynomially decaying prior distributions which can be represented 
 as a scale mixture Gaussian. All these priors satisfy condition (\ref{L}):

\begin{itemize}
 \item Student's $t$-distribution,  for which the mixing distribution of $\lambda^2$ is inverse gamma $\mbox{IG}(a_1, s_n)$ with $a_1>0$.

\item Normal-exponential-gamma (NEG) distribution \cite{GriffinB2011}, for which the mixing distribution is
$\pi(\lambda^2)=\nu s_n^{-1}(1+s_n^{-1}\lambda^2)^{-\nu-1}$ with $\nu>0$.

 \item Generalized double Pareto distribution \cite{ArmaganDL2013} with the density 
   $g(x) = (2\lambda_n)^{-1}$ $(1+|x|/(a_1\lambda_n))^{-(a_1+1)}$, for which the mixing distribution can be represented 
   as a gamma mixture of exponential distributions with $a_1>0$. 

 \item Generalized Beta mixture of Gaussian distributions \cite{ArmaganCD2011}, 
  for which the mixing distribution is inverted Beta: 
    $\lambda_j^2/s_n\sim \mbox{Inverted Beta}(a_1,b_1)$ with $a_1>0$.
    Note that the horseshoe prior is a special case of  generalized Beta mixture Gaussian distributions 
    with $a_1=b_1 = 1/2$.
\end{itemize}

In addition, Theorem \ref{betaprior} implies a simple way to remedy the inconsistency of Bayesian Lasso
by imposing a heavy tail prior on the hyperparameter:
 $\beta/\sigma\sim \mbox{DE}(\lambda_j)$, $\lambda_j^{-1}\sim \pi_{s_n}$, 
where $\mbox{DE}(\lambda)$ denotes the double exponential distribution $\lambda\exp\{-\lambda x\}/2$, and the mixing distribution 
$\pi_{s_n}$ of $\lambda_j^{-1}$ has a polynomial tail with the scale parameter $s_n$.

In the above analysis, we choose the scale parameters $\lambda_n$ or $s_n$ to decrease deterministically 
as $n$ increases. Hence, in practice, certain tuning procedures are recommended as described 
in Section \ref{perform}. 
Such hyperparameter tuning occurs in most Bayesian procedures under the spike-and-slab prior as well. \textcolor{black}{Note the such a tuning procedure usually requires multiple simulations under different levels of $\lambda_n$.}
In the literature, an adaptive Bayesian way to choose $\lambda_n$ is
to assign a hyper-prior on $\lambda_n$. \cite{VanSV2017} studied the horseshoe prior for the
normal means models, and they showed that the posterior consistency remains if $\lambda_n$ is subject to
a hyper-prior which is truncated on $[1/n,1]$.
However, the results derived for normal means models may not be trivially applicable to regression models.
Note there is a $\sqrt n$ difference between  regression models and normal means models,
in terms of $L_2$-norm for the columns in the design matrix. The
result of \cite{VanSV2017} suggests to truncate the prior of $\lambda_n$
on [$n^{-3/2}, n^{-1/2}$] for regression models.
A toy example shown in Figure \ref{shoe} indicates that such truncation still leads to many false discoveries.
\textcolor{black}{Another popular choice is to impose the global shrinkage parameter a half Cauchy prior $\lambda_n\sim \mathcal C^+(0,1)$. 
Simulation studies have been conducted with this hierarchical prior in the supplementary material. 
The numerical results show that this hierarchical prior leads to insufficient prior shrinkage and less accurate posterior concentration.}
Finally, our posterior shape approximation result relies on the fact that 
$\beta_j$'s are {\it a priori} independent  conditioned on $\sigma^2$. If a hyper-prior on $\lambda_n$ is used, then the conditional {\it a priori} independence does not hold any more, and the BvM result (\ref{bvm2}) fails.

\subsection{Two-Component Mixture Gaussian Distributions}\label{ext3}

Another prior that has been widely used in Bayesian linear regression analysis is the two-component 
 mixture Gaussian distribution, see e.g., \cite{GeorgeM1993} and \cite{NarisettyH2014}:
 \begin{equation}\label{prior5th}
  \beta_j/\sigma \sim (1-\xi_j)\mbox{N}(0,\sigma_0^2)+\xi_j\mbox{N}(0,\sigma_1^2),
\quad \xi_j\sim\mbox{Bernoulli}(m_1).
\end{equation}
The component $\mbox{N}(0,\sigma_0^2)$ has a very small $\sigma_0$ and can be viewed as an approximation 
 to the point mass at 0. In the literature, 
 the interest in this prior has been focused only 
 on the consistency of variable selection, i.e., $\pi(\{j: \xi_j=1\}=\xi^*|D_n)$.
 Here, we treat it as an absolutely continuous prior and study the posterior properties of $\bbeta$ 
 in the next theorem.

\begin{theorem}\label{thmssvs}
 Suppose that the two-component mixture Gaussian prior (\ref{prior5th}) is used for the high-dimensional 
 linear regression model (\ref{lm}), and that the following conditions hold: condition $A_1$, condition $A_2$, 
 $E_n^2/\sigma^2_1+\log \sigma_1\asymp \log p_n$, 
 $m_1=1/p_n^{1+u}$ and $\sigma_0\leq a_n/\sqrt{2(1+u)\log p_n}$
for some $u>0$.  Then
\begin{itemize}
 \item the posterior consistency (\ref{postconsist12}) holds when $a_n\asymp\sqrt{s\log p_n/n}/p_n$;
 \item the model selection consistency (\ref{postconsistvs}) holds when
 $a_n\prec \sqrt{\log p_n}/\sqrt{n}p_n$, 
  $sE_n\sqrt{s\log p_n/n}/\sigma_1^2\prec\log p_n$,
 $\min_{j\in\xi^*}|\beta_j^*|\geq M_1\sqrt{\log p_n/n}$ for sufficiently large $M_1$
 and $u>1$;
 \item the posterior approximation (\ref{bvm2}) holds if
$a_n\prec \sqrt{1/(ns\log p_n)}/p_n$, $sE_n\sqrt{s\log p_n/n}/\sigma_1^2\prec 1$,
 $\min_{j\in\xi^*}|\beta_j^*|$ $\geq M_1\sqrt{\log p_n/n}$ for sufficiently large $M_1$ and $u>1$.
\end{itemize}
\end{theorem}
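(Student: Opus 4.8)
The plan is to prove Theorem \ref{thmssvs} purely by verification: for the two-component Gaussian mixture I would check that the abstract hypotheses of the already-established general results hold, namely the prior concentration/tail condition (\ref{priorconcentrationeq}) of Theorem \ref{thmmain}, and the flatness condition $s\log l_n\prec\log p_n$ (resp. $s\log l_n\prec 1$) required by Theorems \ref{variableselection} and \ref{bvmthm}. Integrating out the latent $\xi_j$ in (\ref{prior5th}), the marginal conditional density of $\beta_j/\sigma$ is
\begin{equation*}
g_\lambda(x) = (1-m_1)\,\phi(x;0,\sigma_0^2) + m_1\,\phi(x;0,\sigma_1^2),
\end{equation*}
where $\phi(\cdot;0,\tau^2)$ is the $N(0,\tau^2)$ density and $\lambda=(\sigma_0,\sigma_1,m_1)$. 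Every displayed conclusion then follows by feeding the resulting estimates into the cited theorems, so all the work lies in bounding $g_\lambda$.

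For the first inequality of (\ref{priorconcentrationeq}) I would split the excess tail mass along the two components,
\begin{equation*}
1-\int_{-a_n}^{a_n} g_\lambda(x)\,dx = (1-m_1)\,P(|N(0,\sigma_0^2)|>a_n) + m_1\,P(|N(0,\sigma_1^2)|>a_n).
\end{equation*}
The second term is at most $m_1=p_n^{-(1+u)}$. For the first, the Gaussian tail bound $P(|N(0,\sigma_0^2)|>a_n)\le 2\exp(-a_n^2/2\sigma_0^2)$ combined with the hypothesis $\sigma_0\le a_n/\sqrt{2(1+u)\log p_n}$ forces $a_n^2/2\sigma_0^2\ge(1+u)\log p_n$, giving a bound $2p_n^{-(1+u)}$; summing yields $\le 3p_n^{-(1+u)}$, which is dominated by $p_n^{-(1+\tilde u)}$ for any $\tilde u<u$ and large $p_n$, so Theorem \ref{thmmain} applies with $\tilde u>0$ (and $\tilde u>1$ when $u>1$). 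For the second inequality of (\ref{priorconcentrationeq}) I would lower bound $g_\lambda$ by its slab, $g_\lambda(x)\ge m_1\phi(x;0,\sigma_1^2)$, so that
\begin{equation*}
-\log\Big(\inf_{|x|\le E_n} g_\lambda(x)\Big) \le -\log m_1 + \log(\sqrt{2\pi}\,\sigma_1) + \frac{E_n^2}{2\sigma_1^2},
\end{equation*}
which is $O(\log p_n)$ since $-\log m_1=(1+u)\log p_n$ and $\log\sigma_1+E_n^2/\sigma_1^2\asymp\log p_n$ by hypothesis. This gives the first bullet.

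For the selection and BvM bullets it remains to control $l_n$. Near a true nonzero coordinate $\beta_j^*/\sigma^*$, whose magnitude is at most $\gamma_3 E_n$ and is bounded away from zero by the beta-min condition $\min_{j\in\xi^*}|\beta_j^*|\ge M_1\sqrt{s\log p_n/n}$, the spike $\phi(\cdot;0,\sigma_0^2)$ is negligible relative to the slab because $\sigma_0$ is of order $a_n\prec\sqrt{\log p_n}/(\sqrt n\,p_n)$; thus $g_\lambda(x)\asymp m_1\phi(x;0,\sigma_1^2)$ on $\beta_j^*/\sigma^*\pm c_0\epsilon_n$. Hence for $x_1,x_2$ in that window,
\begin{equation*}
\frac{g_\lambda(x_1)}{g_\lambda(x_2)}\asymp \exp\Big(\frac{x_2^2-x_1^2}{2\sigma_1^2}\Big),\qquad |x_2^2-x_1^2|\lesssim \epsilon_n E_n,
\end{equation*}
using $|x_1-x_2|\le 2c_0\epsilon_n$ and $|x_1+x_2|\lesssim E_n$, so $\log l_n\lesssim E_n\sqrt{s\log p_n/n}/\sigma_1^2$ and $s\log l_n\lesssim sE_n\sqrt{s\log p_n/n}/\sigma_1^2$. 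The selection hypothesis $sE_n\sqrt{s\log p_n/n}/\sigma_1^2\prec\log p_n$ then gives $s\log l_n\prec\log p_n$, and the BvM hypothesis $sE_n\sqrt{s\log p_n/n}/\sigma_1^2\prec 1$ gives $s\log l_n\prec 1$; together with the stated requirements on $a_n$ and $u$, Theorems \ref{variableselection} and \ref{bvmthm} deliver the last two bullets.

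The hard part will be making the claim that the spike is uniformly negligible on the neighborhoods of the nonzero coordinates fully rigorous, since that is exactly what licenses the clean Gaussian-ratio estimate of $l_n$. Concretely, I would compare $(1-m_1)\phi(\beta_j^*/\sigma^*;0,\sigma_0^2)$ with $m_1\phi(\beta_j^*/\sigma^*;0,\sigma_1^2)$ and verify that the exponential gap $\exp\{(\beta_j^*/\sigma^*)^2(1/\sigma_0^2-1/\sigma_1^2)/2\}$ dwarfs $(1-m_1)/m_1=O(p_n^{1+u})$, which follows from the beta-min lower bound on $|\beta_j^*|$ pitted against the extreme smallness of $\sigma_0$. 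The only other friction is cosmetic: absorbing the constant factor $3$ in the tail bound into a slightly reduced exponent $\tilde u$ so that the exact form $p_n^{-(1+\tilde u)}$ of (\ref{priorconcentrationeq}) holds, which costs nothing because all conclusions are stated up to constants and the strict inequalities $u>0$ and $u>1$ leave room.
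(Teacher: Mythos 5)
Your proposal is correct and follows essentially the same route as the paper's own proof: split the mixture's tail mass and bound the spike via the Gaussian tail inequality with $\sigma_0\leq a_n/\sqrt{2(1+u)\log p_n}$, lower-bound the density on $[-E_n,E_n]$ by the slab component $m_1\phi(\cdot;0,\sigma_1^2)$, show the spike is exponentially negligible on the $\pm c_0\epsilon_n$ windows around the nonzero $\beta_j^*/\sigma^*$ so that $s\log l_n\lesssim sE_n\sqrt{s\log p_n/n}/\sigma_1^2$, and then invoke the general theorems. The only cosmetic differences are that the paper uses a Mills-ratio-type tail bound instead of $2e^{-t^2/2}$ and writes "for some $0<u'\leq u$" where you absorb the constant $3$ into $\tilde u$; neither affects the argument.
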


The two normal mixture distribution contains three hyperparameters $m$, $\sigma_0^2$ and $\sigma_1^2$.
Hence, we have more control on the prior shape compared to the polynomially decaying priors,
and the theoretic properties are improved slightly comparing to Corollary \ref{colheavy}.
Specifically, Theorem \ref{thmssvs} allows us to choose $\sigma_1=E_n=p_n^c$ for some $c>1$
and thus 
 $sE_n\sqrt{s\log p_n/n}/\sigma_1^2\prec 1$ always holds, i.e, there will be no additional conditions on the upper bound of the model size $s$; and  Theorem \ref{thmssvs} only requires that $\min_{j\in\xi^*}|\beta^*_j|$ is larger than the order of $\sqrt{\log p/n}$.



\section{Bayesian Computation and an Illustrative Example}\label{perform}

In this section, we will first discuss some important practical issues, including 
posterior computation, model selection and hyperparameter tuning, and then
 we will use some toy examples to illustrate the performance of the shrinkage priors.
 For convenience, we will call the Bayesian method, whose consistency is guaranteed by Theorem \ref{thmmain} with a shrinkage prior, a Bayesian consistent shrinkage (BCS) method in what follows. In particular, we will use the student-$t$ prior, as an example of the shrinkage 
 prior, and compare it with the Laplace prior. 

 The scale mixture Gaussian priors (\ref{prior4th}), under a proper hierarchical representation,
 usually lead to posterior conjugate Gibbs updates. For example, for the student-$t$ prior, 
 the posterior distribution can be updated in the following way: 
\begin{equation}\label{gibbs}
\begin{split}
\sigma^2|\bbeta, \lambda_1,\ldots,\lambda_{p_n} &\sim \mbox{IG}(a_0+\frac{n+p_n}{2}, b_0+\frac{\|\by-\bX\bbeta\|^2}{2}+\sum_j\frac{\beta_j^2}{2\lambda_j^2}),\\
\bbeta |\sigma^2,\lambda_1,\ldots,\lambda_{p_n}   &\sim \mbox{N}(K^{-1}\bX^T\by/\sigma^2, K^{-1}),\\ 
f(\lambda_j^2|\bbeta,\sigma^2) &\propto \frac{1}{\lambda_j}\exp\left\{-\frac{\beta_j^2}{2\lambda_j^2\sigma^2}\right\}\pi(\lambda_j^2), \quad j=1,\ldots, p_n,
\end{split}
 \end{equation}
where $K=(\bX^T\bX+\Lambda)/\sigma^2$, $\Lambda = \mbox{diag}(1/\lambda_j^2)$, and $\pi(\lambda_j^2)$ denotes the density 
 function of an inverse gamma distribution, i.e., $\lambda_j^2\sim \mbox{IG}(a_1, s_n)$. 

The step of updating $\bbeta$ is computationally difficult due to the inverse of a $p_n \times p_n$ matrix.
However, the special structure of the covariance matrix $K^{-1}$ allows for a blockwise update 
 of $\bbeta$ \cite{IshawaranR2005}. For example,
 if we partition $\bbeta$ into two blocks $\bbeta^{(1)}$ and 
 $\bbeta^{(2)}$, and partition $\bX=[\bX_1,\bX_2]$ and $\Lambda=\mbox{diag}(\Lambda_1,\Lambda_2)$ accordingly, 
 then the conditional distribution of $\bbeta^{(1)}$ is given by 
\begin{equation}\label{bgibbs}
 \bbeta^{(1)}|\bbeta^{(2)}\sim \mbox{N}((\bX_1^T\bX_1+\Lambda_1)^{-1}\bX_1^T(\by-\bX_2\bbeta^{(2)}),\sigma^2(\bX_1^T\bX_1+\Lambda_1)^{-1}),
\end{equation} 
 which requires only an inverse of a lower dimensional matrix. 
The computational complexity  of
updating $\bbeta$ in (\ref{gibbs}) is $O(p_n^3)$, while that in (\ref{bgibbs}) is $O((d^3+n(p_n-d))p_n/d)$, 
where $d$ is the block size and the term $n(p_n-d)$ comes from computing the product $\bX_2\bbeta^{(2)}$.
The optimal order of $d$ is $O(\sqrt[3]{np_n})$, which yields a computational complexity of $O(n^{2/3}p_n^{5/3})$ for
one update of the entire vector $\bbeta$. Further improvement in computation is possible when we
 incorporate the idea of the skinny Gibbs sampler \cite{Narisetty2016}.

Posterior model selection based on BCS has been discussed in Sections \ref{main} and \ref{ext}
from the theoretical aspect. However, in practice, the selection rule $\xi(\bbeta)=\{j: |\beta_j/\sigma|>a_n\}$
cannot be directly used since $a_n$ is not an explicit hyperparameter of the prior distribution. 
Recall that $a_n$ represents the boundary of the prior spike region, and it is implicitly 
defined through the condition (\ref{priorconcentrationeq}) as $\pi(|\beta_j/\sigma|>a_n) = p_n^{-1-u}$. 
Since $u$ is unknown, we suggest to choose the threshold $a$ in the rule $\pi(|\beta_j/\sigma|>a) = 1/p_n$, i.e., let $u=0$. 
This rule can be interpreted as that 
the expected {\it a priori} model size 
is equal to 1. Such a rule has often been 
in the literature of Bayesian model selection, 
see e.g. \cite{NarisettyH2014}. 
Obviously, $a_n\leq a$, and thus it leads to a conservative selection. 
However, if $a\ll \min_{j\in\xi^*}|\beta_j^*|$, it is not difficult to see that the Bayesian
selection consistency remains, when $\min_{j\in\xi^*}|\beta_j^*|$ satisfies beta min condition.
 In the simulation studies of this paper, we choose the Bayesian estimator for the model as
 $\hat\xi=\{j: q_j\triangleq\pi(|\beta_j/\sigma|>a|D_n)>t\}$, where $t=0.5$ and $q_j$  
 plays the role of posterior inclusion probability. 
 It is worth to mention that one may also use a data-driven method to determine the value of $t$,
 and make the variable selection rule more robust across different sparsity regimes. 
 For instance, we can conduct a multiple hypothesis test based on the marginal inclusion probabilities
 $q_j$'s for the hypotheses $H_{j0}: \beta_j=0$
 versus $H_{j1}: \beta_j \ne 0$ $j=1,\ldots, p_n$ based on posterior summaries. 
 This can be done using an empirical Bayesian approach as developed in \cite{Efron2004, LiangZ2008}.  
 

 Another important practical issue is how to select
  hyperparameters. The theory developed in Section \ref{main} and Section \ref{ext}
 provides only sufficient conditions for the asymptotic order of hyperparameters. 
 For example, by Theorem \ref{colheavy}, one can set the scale parameter 
 $\lambda_n =1/[\sqrt{n\log p_n}p_n^{\gamma}]$ with any sufficiently large value of $\gamma$
 for the $t$-prior.  Asymptotically, 
 an excessively large value of $\gamma$ 
 doesn't affect the rate of convergence,
 but affects only the multiplicative constants,  such as $M$ and $c_1$, in the statement of Theorem \ref{thmmain}. 
 However, in finite-sample applications, it is crucial to select a properly scaled parameter such that 
 the posterior is neither over- nor under-shrunk.
 In this work, we let $\lambda_n = 1/[\sqrt{n\log p_n}p_n^{\hat\gamma}]$ and choose $\hat\gamma$ to minimize the posterior mean of a ``BIC-like score'': $\int bic(\bbeta,\sigma^2) d\pi(\bbeta,\sigma^2|D_n,\gamma)$,
 where $bic(\bbeta) = n\log (\|Y-\bX^T\tilde\bbeta\|^2/n)+\|\tilde \bbeta\|_0\log n$, $\tilde\bbeta=(\tilde\beta_1,\dots,\tilde\beta_{p_n})$, $\tilde\beta_j=\beta_j1(|\beta_j/\sigma|>a)$,
 and $\pi(\bbeta,\sigma^2|D_n,\gamma)$ is the posterior distribution of $(\bbeta, \sigma^2)$ given the hyperparameter $\gamma$.
 In practice, one can run multiple posterior simulations with
 different values of $\gamma$, and then choose the one that yields the smallest posterior sample mean of the ``BIC-like'' score. 
 Since the multiple runs can be made in parallel on a high-performance computer, 
 such a parameter tuning strategy doesn't add much on computational time. 
 Since investigating the theoretical properties of tuning parameter selection is beyond the scope of this work, such study will be conducted elsewhere.

 We illustrate the performance of BCS using a simulated example, where $p=200$, $n=120$, and
 the non-zero coefficients are $(\beta_1,\beta_2,\beta_3,\beta_4)=(1,1,1,1)$. 
For the Laplace prior, we set the hyper-parameter $\lambda = \sqrt{n \log(p_n)}$ 
 at which the Lasso estimator is known to be consistent, see e.g. \cite{ZhangH2008}. 
For the student-$t$ prior, we set the degree of freedom to be 3 
 with the scale parameter $s_n=\lambda_n^2=1/[n\log p_np_n^{-2\gamma}]$, 
 where $\gamma$ ranges from -0.25 to 1.1, the best $\hat\gamma$ is selected as described in the above.
For both priors, we let $\sigma^2$ be subject to an inverse gamma distribution with 
$a_0=b_0=1$.

The numerical results are summarized in Figure \ref{toy}. The first plot shows the
 posterior sample mean of the BIC-like score with different values of $\gamma$. It shows that when $\gamma$ is larger than 0.8,
 the tuning parameter $\lambda_n$ is too small, the posterior begins to miss true covariates due to over-shrinkage, and
 thus the posterior mean of the BIC-like score rapidly increases to a very large value.
 The second and third plots are the posterior boxplots of $\pi(\beta_j|D_n)$ of Bayesian Lasso, 
 and BCS under the optimal setting of $\hat\gamma$.  
 To make the boxplots more visible, we only include the coefficients of the first 50 covariates, including four true covariates.
 The comparison shows that BCS led to a consistent inference of the model in the 
 sense that the coefficients of the false covariates were shrunk to zero, and the coefficients of the true covariates were 
 distributed around their true values.
 In contrast, Bayesian Lasso over-shrunk the coefficients of true covariates, 
 and under-shrunk the coefficients of false covariates. 
 This is due to the fact the Laplace prior failed to achieve the balance between prior concentration and 
 tail thickness.
 But it is worth to note that the posterior Bayesian Lasso can still separate the true and false covariates, and 
 thus it can be used for model selection.  
 
\begin{figure}[htbp]
 \begin{center}
  \includegraphics[width=13cm]{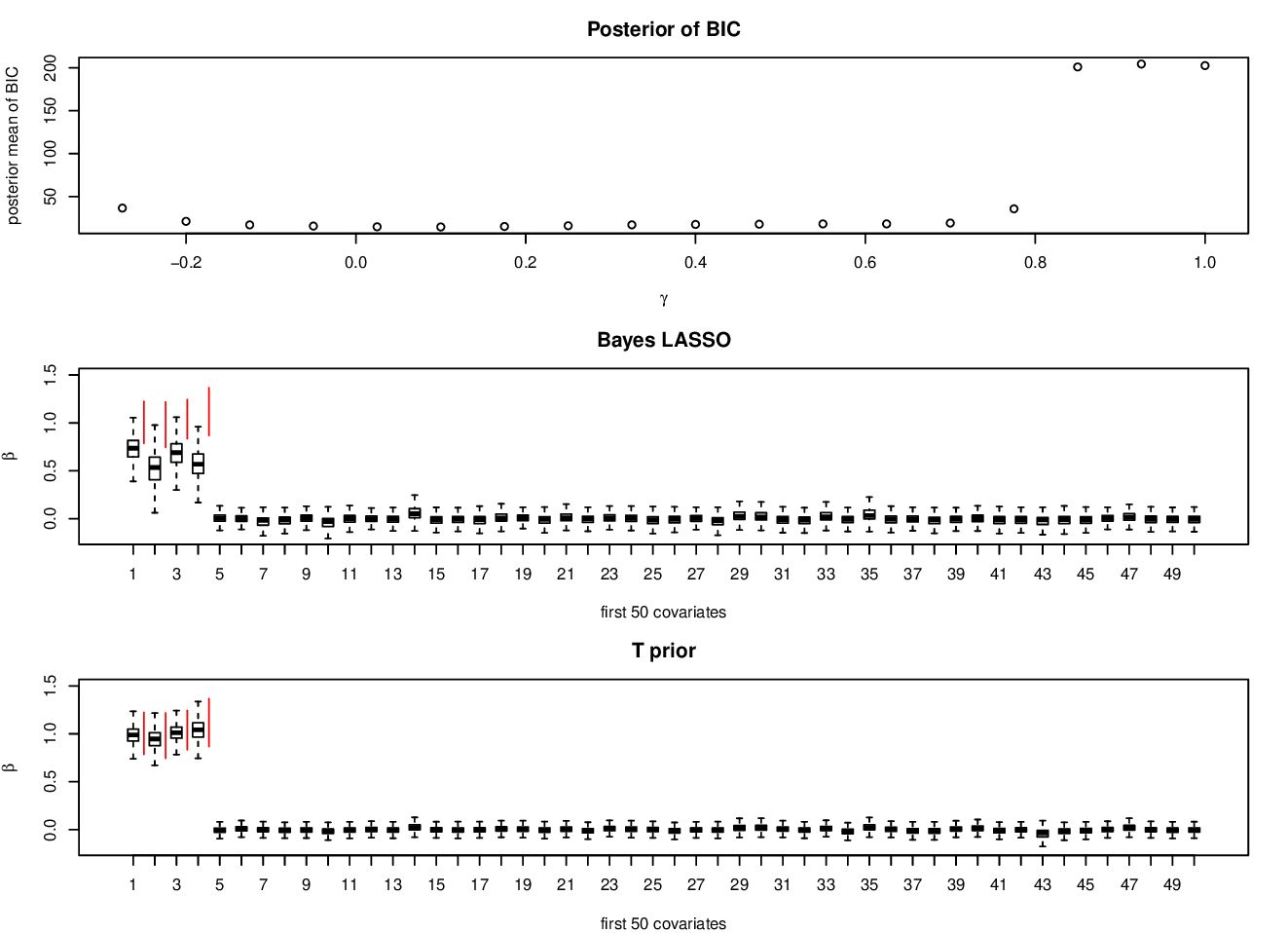}
  \caption{
   Upper: posterior mean of the BIC-like score for different value of $\gamma$;
   middle: box-plots of the posterior samples by Bayesian Lasso;
   lower: box-plots of the posterior samples by BCS.
  }\label{toy}
 \end{center}
\end{figure}

\begin{figure}[htbp]
 \begin{center}
  \includegraphics[width=12cm]{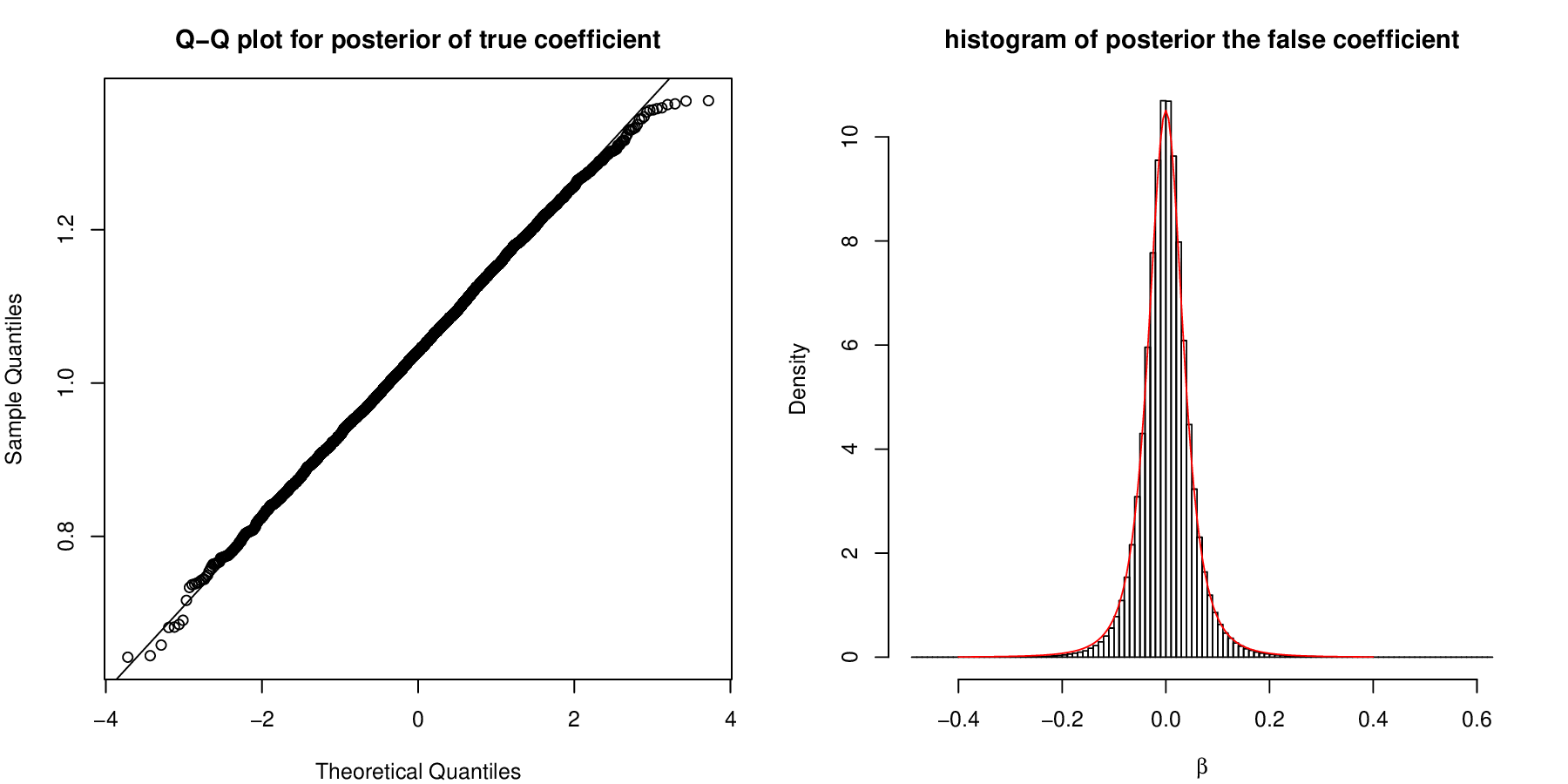}
  \caption{Shape of the posterior distribution  by BCS: the left plot shows the QQ plot for one true covariate, and the right plot shows a histogram of the posterior samples of $\beta_j/\sigma$, $j\notin\xi^*$ (i.e.,  
 false covariates), where the red curve represents the density of the student-$t$ prior.
  }\label{toytoy}
 \end{center}
\end{figure}

In addition, we drew in Figure \ref{toy} four red vertical segments
which represent the 99\% oracle confidence intervals of the 
true coefficients by assuming that the true model is known. In Figure \ref{toytoy}, 
we examined the shape of posterior samples resulted from BCS.
The plots are consistent with the established BvM Theorem (\ref{bvm2}).

 Figure \ref{toy} shows that for this example a wide range of $\gamma$, from -0.1 to 0.6,
 yielded similar posterior means for the  BIC-like score, which implies that the true model is correctly selected 
 under $\gamma$ within this range. 
 The BIC-like score posterior mean criterion tends to select a smaller value of $\gamma$ within this range, 
 since a smaller $\gamma$ reduces the shrinkage effect on the true covariates.
 But as shown in the supplementary material, the performance of BCS is actually quite stable with any $\gamma$ in this range.
 This also implies that BCS is tolerant to stochastic tuning errors.

As discussed previously, the Bayesian interval estimates obtained by BCS will be super-efficient for false covariates. 
Their coverages highly rely on the selection consistency, and have completely different 
performance compared to frequentist confidence intervals. 
The frequentist de-biased Lasso estimator 
is defined as $\hat\bbeta = \hat\bbeta_{\tiny \mbox{LASSO}}+ \frac{1}{n}S\bX^T(\by-\bX \hat\bbeta_{\tiny \mbox{LASSO}})$,
where $S$ is the surrogate inverse matrix of the sample covariance. 
This de-bias step applies an OLS-type bias correction to the Lasso estimator. 
In the ideal case that $p_n\leq n$ and $\frac{1}{n}S=(\bX^T\bX)^{-1}$, the de-biased Lasso estimator reduces to the OLS estimator.
Therefore, the marginal confidence intervals of all covariates, including both true and false, 
have the same length scale (Detailed illustration can be found in  the supplementary material).

\section{Numerical Studies}\label{simu}

This section examines the performance of BCS 
in variable selection and uncertainty assessment for the regression coefficient estimates.
The method is tested on two simulation examples and a real data example.

In the simulation study, two design matrices were considered for the model (\ref{lm}):
$(n,p)=(80,201)$ and (100,501), where the intercept term has been included.
The true values of the parameters are $\sigma^*=1$, $\bbeta=(0,1,1.5, 2,0,\dots,0)^T$, where the first 0 
corresponds to the intercept term. 
The  design matrices were generated from the multivariate normal distribution $\mbox{N}(0,\Sigma)$ 
 with the covariance structure 
1) independent covariates: $\Sigma = I$; or 2) pairwisely dependent covariates: 
 $\Sigma_{ii}=1.0$ for all $i$, $\Sigma_{i,j}=0.5$ for $i\neq j$. 
The methods under comparison include BCS, Bayesian Lasso, Lasso, and SCAD. 
 For Bayesian Lasso, we set the scale parameters  
 to be $\lambda=\sqrt{n\log p_n}$. 
 For BCS, the tuning parameter $s_n$ is selected by the posterior mean of the BIC-like score  as discussed in Section \ref{perform}.
 For the setting of the Gibbs sampler, we set the total iteration number to $N=40,000$ in addition to 5000 iterations for the burn-in process. The posterior samples were collected at every 40 iterations.
 The R-package {\it glmnet} \cite{FriedmanHT2010} and {\it ncvreg} \cite{BrehenyH2011}
 were used for implementing Lasso and SCAD, \textcolor{black}{where the tuning parameter $\lambda$ was chosen to minimize the 10-fold cross-validation error. For Lasso, this is to set $\lambda=  \texttt{lambda.min}$ in {\it glmnet}. Since LASSO is known to select many false variables, we have also tried to set $\lambda=\texttt{lambda.1se}$, which is to choose the largest value of $\lambda$ such that the cross-validation error is within one standard deviation of the minimum cross-validation error.}
 The R-package {\it hdi} \cite{DezeureBMM2014} was used  for implementing 
 de-biased Lasso. All the results reported below were based on 112 simulated replicates.

\subsection{Simulation I: $n$=80, $p$=201}
We evaluated accuracy of the estimates obtained from various methods in 
$L_1$-error, which is defined as $\sum_{j\in\xi^*}|\beta^*_j-\hat\beta_j|$ for 
the true covariates and $\sum_{j\notin\xi^*}|\hat\beta_j|$ 
for the false ones.  
For the Bayesian methods, the posterior mean was used as the point estimator, although 
which is not the optimal choice for minimizing the $L_1$-error.
We evaluated the accuracy of variable selection using the average number of
selected true covariates $|\hat\xi\cap\xi^*|$ (the perfect value is 3), and the
average number of selected false covariates $|\hat\xi\cap(\xi^*)^{c}|$ (the perfect value is 0).
For each covariate, we also compared the marginal
credible intervals produced by the Bayesian methods and the 
confidence intervals produced by de-biased Lasso under a nominal level of 95\%.
For simplicity, the credible intervals were constructed based on the empirical quantiles from posterior samples
instead of the highest density region.

\begin{table}
\caption{Comprehensive comparison of BCS, Bayesian Lasso (Bay-Lasso), LASSO with \texttt{lambda.min} (LASSO$_1$), LASSO with \texttt{lambda.1se} (LASSO$_2$), SCAD and
de-biased Lasso for the datasets with independent covariates, $n=80$ and $p=201$.}\label{Tg1}
\begin{center}
\begin{tabular}{ccccccc}
 \hline
 & \multicolumn{6}{c}{Methods}\\ \cline{2-7}
                                         & BCS            & Bay-Lasso        & debiased-Lasso & LASSO$_1$ & LASSO$_2$ &SCAD\\ \hline
$L_1$ error of $\bbeta_{\xi^*}$          &0.3380          &2.1115            & 0.3503  &0.6678 &1.0850 &0.2811\\ 
Standard error                           &0.0149          &0.0281            & 0.2537  &0.0211 &0.0275 &0.0133\\ \hline
$L_1$ error of $\bbeta_{(\xi^*)^{c}}$    &2.3137          &4.5533            & 23.305  &0.8402 &0.1650 &0.2180\\ 
Standard error                           &0.0758          &0.0360            & 0.2537  &0.0950 &0.0319 &0.0324\\\hline
\hline
$|\hat\xi\cap\xi^*|$                     & 3              &2.3036            &--- & 3       & 3      & 3    \\ 
Standard error                           & ---            &0.0505            &--- & ---     & ---    &  ---   \\\hline
$|\hat\xi\cap(\xi^*)^{c}|$               & 0              &0                 &--- & 14.161  & 3.1964 &4.3304\\ 
Standard error                           & ---            &---               &--- & 1.2841  & 0.5041 &0.5176\\\hline
\hline                                                     
Coverage of $\xi^*$                      &0.9067          & 0.0595           &0.9613&---&---&---\\
Average length                           &0.4996          & 0.8471           &0.5798&---&---&---\\ \hline
Coverage of $(\xi^*)^{c}$                &1.0000          & 1.0000           &0.9492&---&---&---\\ 
Average length                           &0.1371          & 0.3322           &0.5490&---&---&---\\ \hline
\end{tabular}
\end{center}
\end{table}

\begin{table}
\caption{Comprehensive comparison of BCS, Bayesian Lasso (Bay-Lasso), LASSO with \texttt{lambda.min} (LASSO$_1$), LASSO with \texttt{lambda.1se} (LASSO$_2$), SCAD and
de-biased Lasso for the datasets with dependent covariates, $n=80$ and $p=201$.}\label{Tg2}
\begin{center}
\begin{tabular}{ccccccc}
 \hline
 & \multicolumn{6}{c}{Methods}\\ \cline{2-7}
                                       & BCS             & Bay-Lasso  & debiased-Lasso & LASSO$_1$ & LASSO$_2$ &SCAD\\ \hline
$L_1$ error of $\bbeta_{\xi^*}$        &0.5040           &2.7798      & 0.4469  &0.8735 &0.9516 &0.3593\\ 
Standard error                         &0.0342           &0.0302      & 0.0192  &0.0265 &0.0242 &0.0170\\ \hline
$L_1$ error of $\bbeta_{(\xi^*)^{c}}$  &0.3805           &4.8558      & 24.795  &1.3638 &0.4509 &0.1587\\ 
Standard error                         &0.0782           &0.0302      & 0.2448  &0.1083 &0.0274 &0.0198\\\hline
\hline                                                          
$|\hat\xi\cap\xi^*|$                   & 2.9             &1.7500      &--- & 3     &3      & 3    \\ 
Standard error                         & 0.03            &0.0546      &--- & ---   &---    & ---   \\\hline
$|\hat\xi\cap(\xi^*)^{c}|$             & 0.01            &0           &--- & 16.179&6.7232 & 2.3125\\ 
Standard error                         & 0.01            &---         &--- & 0.9801&0.3418 & 0.2568\\\hline
\hline
Coverage of $\xi^*$                    &0.9000           & 0.0327     &0.8988&---&---&---\\
Average length                         &0.6970           & 0.9279     &0.6046&---&---&---\\ \hline
Coverage of $(\xi^*)^{c}$              &1.0000           & 1.0000     &0.9543&---&---&---\\ 
Average length                         &0.0373           & 0.3804     &0.5418&---&---&---\\ \hline
\end{tabular}
\end{center}
\end{table}

The results are summarized in Table \ref{Tg1} and Table \ref{Tg2} for the case of 
independent covariates and the case of dependent covariates, respectively.  
First of all, we can see that BCS 
worked extremely well in identifying true models, whose performance is almost perfect.
As seen in Section \ref{perform}, Bayesian Lasso can also distinguish the true and false covariates from 
 posterior samples when the coefficients of the true covariates are sufficiently large. 
 However, due to over-shrinkage, it doesn't work well when they are small. Hence,
Bayesian Lasso mis-identified some true covariates for this example.
Both Lasso and SCAD tend to select dense models, although the true covariates 
can be selected.  As mentioned previously, this is due to an inherent drawback of the 
 regularization methods. The regularization shrinks the true regression coefficients  
 toward zero. To compensate the shrinkage effect, some false covariates have to be included.
 \textcolor{black}{For Lasso, the comparison shows that the choice of $\lambda=\texttt{lambda.1se}$ alleviates the ``overselection'' issue, and leads to less estimation error for zero $\beta_j$'s and larger estimation bias for nonzero $\beta_j$'s.}
 BCS also shrinks the true regression coefficients, but it can still perform well in 
 variable selection. This is due to that BCS accounts for the uncertainty 
 of coefficient estimates in variable selection: BCS is sample-based, for which 
 different false covariates might be selected to compensate the shrinkage effect at different 
 iterations, and thus the chance of selecting false covariates can be largely eliminated by 
 averaging over different iterations. 

 Regarding parameter estimation, we note that SCAD yields a somehow better results than BCS. However,
 a direct comparison of these two methods is unfair, as the BCS tells us something more beyond 
 point estimation, e.g., credible interval. Also, BCS leads to much accurate variable selection
 as reported above. Among the Bayesian methods,  
 we can see that BCS performs much better than Bayesian Lasso, which indicates the 
 importance of posterior consistency. \textcolor{black}{We note that it is unfair to directly compare $L_1$-estimation errors of $\bbeta_{(\xi^*)^c}$ for shrinkage estimators (BCS or Bayesian Lasso) and sparse estimators (Lasso or SCAD), since the shrinkage estimators never shrink any coefficients to exactly zero. For example, in Table \ref{Tg1}, the $L_1$ error of BCS is 2.3, which is much larger than those by LASSO and SCAD. However, it actually implies that $\hat\beta_j\approx 2.3/200\approx0.011$ for each zero $\beta_j$, as BCS selected almost no false predictors. Hence, it represents a fairly successful shrinkage for the false predictors.}

 For interval estimation, de-biased Lasso produced high quality confidence intervals. 
 For both true and false covariates, it produced about 
 the same length confidence intervals, and the coverage rates of these confidence intervals were about the same as the nominal level.
 This observation is consistent with our previous
 discussion.
 For the true covariates, BCS yields almost 95\% converge; in contrast,  
 Bayesian Lasso yields a very low coverage due to the effect of over-shrinkage.
 For the false covariates, both BCS and Bayesian Lasso produced 100\% coverage with very narrow 
 credible intervals. Hence, they don't hold the long-run frequency coverage for the false predictors.
 These discoveries agree with our theoretical results.
 The de-biased Lasso yields wider intervals for the false covariates, as
 it cannot incorporate the model sparsity information 
 into the construction of confidence intervals. 

The performance of BCS for the cases of independent and dependent covariates are quite consistent, 
except that the proposed method tends to select a smaller value of $\gamma$ for the independent case 
 and, as a consequence, the posterior $L_1$-error of the false covariates tends to be larger than for 
 the dependent case.
 This is reasonable, as the high spurious correlation requires a higher
 penalty for the multiplicity adjustment.

\subsection{Simulation II: $n$=100, $p$=501}

\begin{table}
\caption{Comprehensive comparison of BCS, Bayesian Lasso (Bay-Lasso), LASSO with \texttt{lambda.min} (LASSO$_1$), LASSO with \texttt{lambda.1se} (LASSO$_2$), SCAD and
de-biased Lasso for the datasets with independent covariates, $n=100$ and $p=501$.}\label{Tg3}
\begin{center}
\begin{tabular}{cccccccc}
 \hline
 & \multicolumn{6}{c}{Methods}\\ \cline{2-7}
                                          & BCS      & Bay-Lasso & debiased-Lasso & LASSO$_1$ & LASSO$_2$ &SCAD\\ \hline
$L_1$ error of $\bbeta_{\xi^*}$           &0.2789    &2.3863     & 0.3177         & 0.7173 &0.9645 &0.2616\\ 
Standard error                            &0.0115    &0.0310     & 0.0145         & 0.0229 &0.0253 &0.0107\\\hline
$L_1$ error of $\bbeta_{(\xi^*)^{c}}$     &4.4011    &8.7190     & 50.301         & 0.9736 &0.2158 &0.3080\\ 
Standard error                            &0.0312    &0.0602     & 0.4636         & 0.0900 &0.0436 &0.0402\\\hline
\hline                                                                        
$|\hat\xi\cap\xi^*|$                      & 3             &2.1964         &---              & 3     &3      & 3    \\ 
Standard error                            & ---           &0.0436         &---              & ---   &---    & ---   \\\hline 
$|\hat\xi\cap(\xi^*)^{c}|$                & 0.0268        &0              &---              & 20.554&4.7411 & 7.0178\\ 
Standard error                            & 0.0153        &---            &---              & 1.6070&0.8629 & 0.8042\\\hline
\hline
Coverage of $\xi^*$                       & 0.9285   & 0.0208    &0.9494            &---&---&---\\
Average length                            & 0.430    & 0.7412    &0.4985            &---&---&---\\ \hline
Coverage of $(\xi^*)^{c}$                 & 1.0000   & 1.0000    &0.9517            &---&---&---\\ 
Average length                            & 0.1506   & 0.2841    &0.6038            &---&---&---\\ \hline
\end{tabular}                         
\end{center}
\end{table}

\begin{table}
\caption{Comprehensive comparison of BCS, Bayesian Lasso (Bay-Lasso), LASSO with \texttt{lambda.min} (LASSO$_1$), LASSO with \texttt{lambda.1se} (LASSO$_2$), SCAD and
de-biased Lasso for the datasets with dependent covariates, $n=100$ and $p=501$.}\label{Tg4}
\begin{center}
\begin{tabular}{ccccccc}
 \hline
 & \multicolumn{6}{c}{Methods}\\ \cline{2-7}
                                           & BCS       & Bay-Lasso & debiased-Lasso & LASSO$_1$ & LASSO$_2$ &SCAD\\ \hline
$L_1$ error of $\bbeta_{\xi^*}$            &0.3960     &3.1087     & 0.3888 & 0.8742 &1.0228  & 0.3338\\ 
Standard error                             &0.0260     &0.0300     & 0.0155 & 0.0282 &0.0223  & 0.0158\\\hline
$L_1$ error of $\bbeta_{(\xi^*)^{c}}$      &0.4288     &9.2585     &54.2889 & 1.3656 &0.5331  & 0.1424\\ 
Standard error                             &0.1076     &0.0694     & 0.4754 & 0.1045 &0.0342  & 0.0196\\\hline
\hline                                                             
$|\hat\xi\cap\xi^*|$                       & 2.9464           &1.4554     &---    & 3     &  3    & 3    \\ 
Standard error                             & 0.0213           &0.0566     &---    & ---   &---    & ---   \\\hline
$|\hat\xi\cap(\xi^*)^{c}|$                 & 0.0089           &0          &---    & 21.428&9.7324 & 6.4732\\ 
Standard error                             & 0.0089           &---        &---    & 1.3218&0.4742 & 0.8065\\\hline
\hline                                                            
Coverage of $\xi^*$                        & 0.9107    & 0.0060    & 0.9077&---&---&---\\
Average length                             & 0.5783    & 0.7498    & 0.5263&---&---&---\\ \hline
Coverage of $(\xi^*)^{c}$                  & 1.0000    & 1.0000    & 0.9316&---&---&---\\ 
Average length                             & 0.0219    & 0.2870    & 0.6142&---&---&---\\ \hline
\end{tabular}                                                     
\end{center}
\end{table}

The results are summarized in Tables \ref{Tg3} and \ref{Tg4} for 
the independent and dependent covariates, respectively. 
 As in the case with $n=80$ and $p=201$, BCS performs much better than 
 the regularization methods in variable selection, and performs 
 much better than Bayesian Lasso in all aspects of variable selection, 
 parameter estimation and interval estimation. 
 
 \vskip 0.2in
 {\noindent Before moving forward to the real application in the next section, we would like to mention that we also conduct simulations, under the same data generation scheme, for the two-Gaussian mixture prior specification, and the results are presented in the Supplementary Materials. While the two-Gaussian mixture prior also achieves near-perfect model selection performance, we find that its shrinkage effect on $\bbeta_{(\xi^*)^c}$ and its interval estimation coverage performance are inferior to those of $t$ shrinkage prior (although they are much better than Bayesian Lasso inference results). One potential reason is that the hyperparameters $m_1$, $\sigma_1^2$ and $\sigma_0^2$ are not optimally tuned. Our empirical experience shows that the value of $m_1$ has a large effect on model selection performance, and the values of $\sigma_1^2$ and $\sigma_0^2$ affect the level of posterior shrinkage and posterior normality asymptotics. However, tuning all three hyperparameters simultaneously is much more difficult in practice, than tuning only one hyperparameter of the $t$-shrinkage methods, hence is not recommended.
 }

\subsection{A Real Data Example}

 We analyzed a reduced gene expression dataset on Bardet-Biedl syndrome from  \cite{Scheetz2006}. The reduced dataset is available in the R package {\it flare} \cite{flareR}, which contains 120 samples with 201 gene expression levels. The scientific community has discovered that TRIM32 is the causal gene to Bardet-Biedl syndrome \cite{chiang2006homozygosity}. In this example, we treat the expression level of gene TRIM32 as the response variable and the expression levels of the other 200 genes as predictors. Therefore, the selected set of genes from this regression will cover the regulators of the gene TRIM32 by the consistency property of BCS. 
 
 We applied both de-biased Lasso and BCS to this regression problem.
 De-biased Lasso identified  gene 153 as the only significant covariate according to the  Bonferroni-adjusted p-values, and produced a 95\% confidence interval of  [0.024,0.072] for this gene.
 For BCS, 
 the optimal value $\hat \gamma = 0.58$ was selected, and
 the posterior exceedance probability $q_j\triangleq \pi(|\beta_j/\sigma|>a|D_n)$ was  
 used to quantify the significance of each covariate, 
 where $a$ is as defined in Section \ref{perform}.
 BCS also identified gene 153 as the most significant 
 covariate with $q_{153} =0.54$.
 Figure \ref{eye1} shows the posterior distribution of the regression coefficient of gene 153 under the 
  choice of $\hat\gamma=0.58$ as well as the confidence intervals produced by
 the two methods. The 95\% HPD 
 credible interval produced by BCS is $[-0.018, 0.018]\cup[0.064, 0.131]$, which
 is the union of two intervals
  representing the evidence against and for 
 being the true covariate, respectively.
 Note that if the true model is exactly the 153th gene, its OLS estimator will be 0.109.
 The de-biased Lasso confidence interval (represented by the dashed segment in Figure \ref{eye1}) seems a compromise between the two intervals, and it doesn't contain the OLS value 0.109.

\begin{figure}[htbp]
 \begin{center}
  \includegraphics[width=3.5in,height=2.5in]{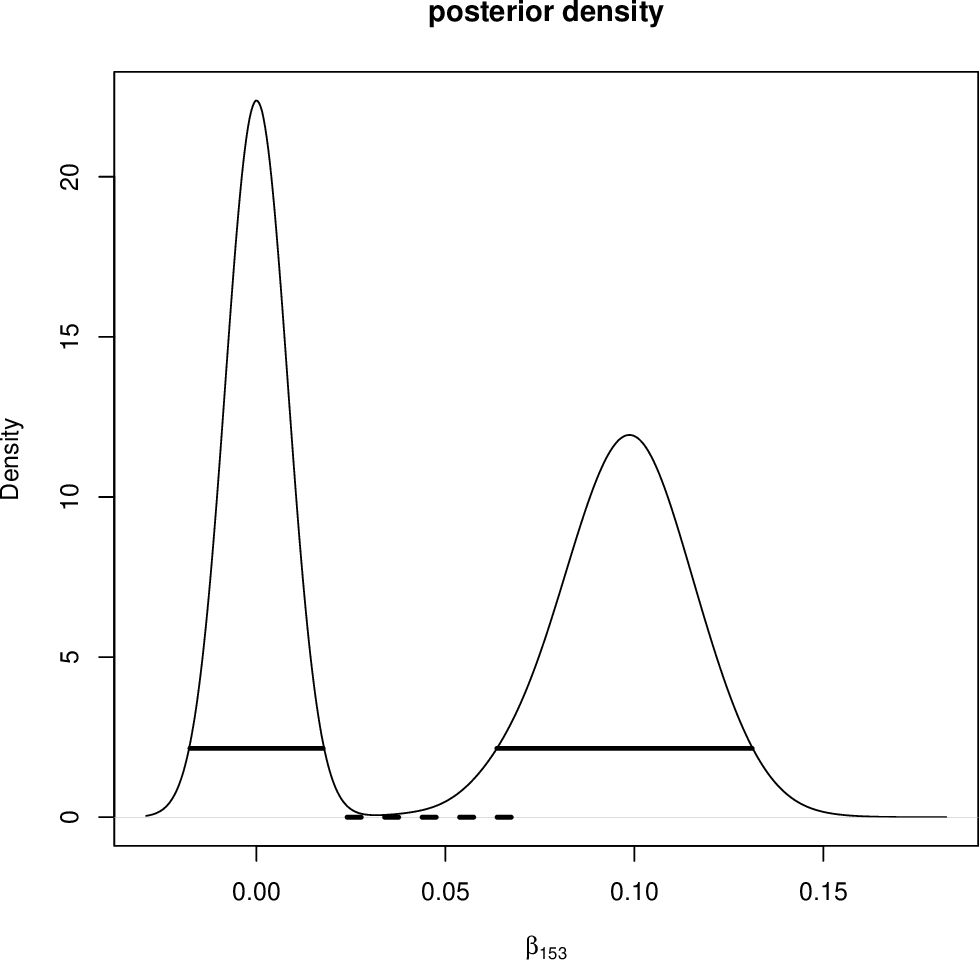}
  \caption{
  Histogram of the posterior samples of the regression coefficient of gene 153, where
  the black line shows the posterior HPD interval, and the dashed line shows the 
  de-biased Lasso confidence interval.}\label{eye1}
 \end{center}
\end{figure}

\section{Conclusion}\label{diss}

 In this paper, we have studied the posterior asymptotics under absolutely continuous priors 
 for high-dimensional linear regression. 
 We first proved that if the prior distribution is heavy-tailed and allocates a 
 sufficiently large probability mass in a very small neighborhood of zero, then the posterior consistency
 holds with a nearly optimal contraction rate.  
 More specifically, we found that any polynomial-tailed distribution 
 with a scale parameter, which decreases as $p_n$ increases, can be used as an appropriate prior 
 to derive valid Bayesian inference for high dimensional regression models.
 Note that it is not necessary for the continuous prior distribution to
 have an infinite density at zero as in the DL or horseshoe priors.

 In the literature, the local-global shrinkage prior has been widely studied, 
 especially for the normal means problem. Such prior follows 
 $\beta_j\sim N(0, \sigma^2\lambda_j^2\tau^2)$, where $\lambda_j^2$ controls the local shrinkage, and 
 $\tau^2$ controls the degree of global shrinkage. Our work verifies that 
 a sufficient condition that ensures consistency of the local-global shrinkage is to let the 
 local shrinkage parameter $\lambda_j^2$  follow some polynomial-tailed distribution, 
 and let the global shrinkage parameter $\tau^2$  deterministically decrease in the order
 $-\log(\tau^2)=O(\log p_n)$. 
 In this work, we suggest a BIC-like score posterior mean criterion for tuning the global shrinkage parameter. Although it works well for our examples,  it is still of great interest to the Bayesian community if  
 an adaptive or full Bayesian approach can be 
 developed for choosing, rather than tuning, 
 the global shrinkage parameter. 
 Such analysis has been conducted by \cite{VanSV2017} under normal means models. 
 However, there is a significant difference between  normal means models and regression models.
 For the former, one can directly analyze the marginal posterior $\pi(\beta_j|D_n)$ as  
 $\beta_j$'s are (conditionally) independent. 
 For the latter one, one needs to take into 
 account of the dependency among covariates. Empirically, the result of \cite{VanSV2017} seems not applicable to regression 
 problems. Figure \ref{shoe} shows the boxplots of the regression coefficients drawn from a posterior $\pi(\beta_j|D_n)$ constructed with a 
 horseshoe prior for the same dataset used in the toy example of Section \ref{perform}, where $\lambda_j$ is subject to a half-Cauchy prior, $\tau$ is subject to a uniform prior truncated on
 $[n^{-3/2}, n^{-1/2}]$. The plot shows that the horseshoe prior leads to many false discoveries 
 for this example. Therefore, we would note that adaptively choosing the global shrinkage parameter is nontrivial due to spurious multicollinearity caused by the curse of dimensionality.

 \begin{figure}[htbp]
 \begin{center}
  \includegraphics[width=10cm]{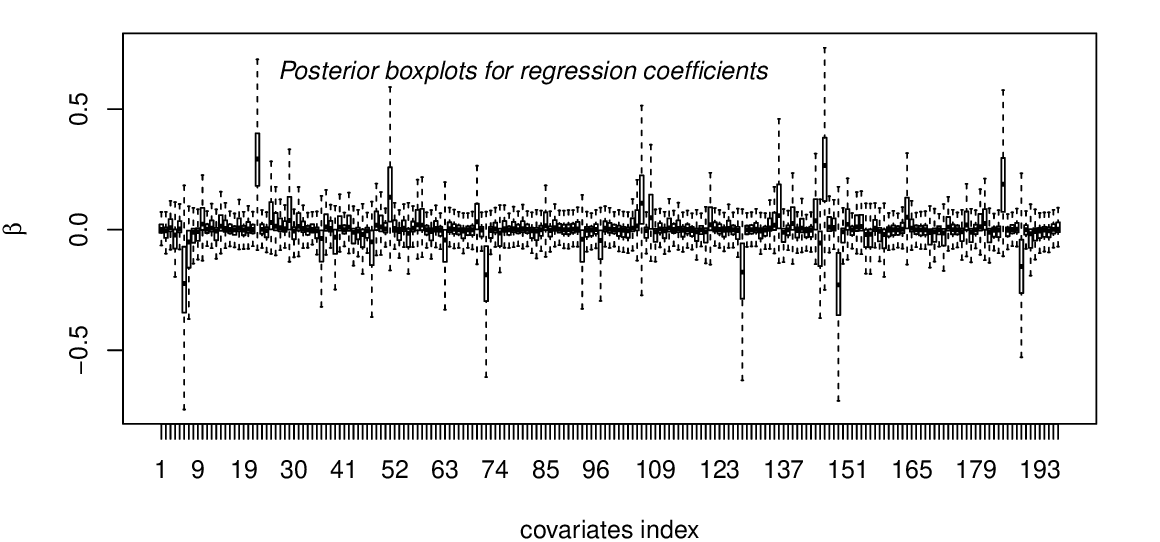}
  \caption{
  Boxplots of  $\{\beta_j\}_{j\notin\xi^*}$ simulated from a posterior distribution with a horseshoe prior for the same dataset used in Figure \ref{toy}, where the global shrinkage parameter was truncated into [$n^{-3/2}, n^{-1/2}$].}\label{shoe}
 \end{center}
\end{figure}

In this paper, we have also studied the selection consistency based on sparsified posterior,
as well as the posterior shape approximation. We proved that if the tail of the prior distribution 
is sufficiently flat, then selection is consistent and the BvM-type result holds.
This further implies that for the true covariates, the credible intervals are asymptotically equivalent to 
the oracle confidence intervals; and for the false covariates, the credible intervals 
are super-efficient. 

The theory established in this paper implies that a consistent shrinkage prior shares almost 
the same posterior asymptotic behavior with the golden standard spike-and-slab prior, see e.g. 
 \cite{CastilloSHV2015}. However, the shrinkage prior is more efficient in computation. In this paper, we used a student-$t$ prior in all numerical studies, and the Gibbs sampler was conveniently used in sampling from  posterior distributions. 
 The computation shall be further improved if a stochastic gradient MCMC algorithm is employed for simulations.
 However, for the spike-and-slab prior, a trans-dimensional MCMC sampler has to be used
 for simulations.


\Acknowledgements{Song's research is supported in part  by the NSF grant DMS-1811812. Liang's researchh is supported in part by
 the NSF grants DMS-2015498, and the NIH grants 
R01GM117597 and R01GM126089. The authors thank Professor Xuming He for his comments and recommendation on the paper, and the editor, associate editor and referees for their constructive comments on the paper.}


\Supplements{ }

\bibliographystyle{mybst2}
\bibliography{ref.bib}

\begin{appendix}

\section{Proof of the main theorem}
First, we restate the result from lemma 2.2.11 in \cite{VaartW1996} for the sake of readability.
\begin{lemma}[Bernstein's inequality]\label{lemmaa}
If $Z_1,\dots,Z_n$ are independent random variables with mean zero and satisfy that $E|Z_i|^m\leq m!M^{m-2}v_i/2$ for every 
$m>1$ and some constants $M$ and $v_i$, then
\[P(|\sum Z_i|>z)\leq 2\exp\{-z^2/2(v+Mz)\},\]
for $v\geq \sum v_i$.
\end{lemma}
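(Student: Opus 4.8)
The plan is to use the classical exponential-moment (Chernoff) method. First I would write $S=\sum_{i=1}^n Z_i$ and, for any $t>0$, apply Markov's inequality to $e^{tS}$, which by independence gives $P(S>z)\le e^{-tz}\prod_{i=1}^n E[e^{tZ_i}]$. The problem then reduces to controlling each moment generating function $E[e^{tZ_i}]$ by means of the hypothesized growth bound on the moments.

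Second, I would expand the exponential as a power series and invoke $E[Z_i]=0$ to annihilate the linear term, so that $E[e^{tZ_i}]=1+\sum_{m\ge 2}\frac{t^m}{m!}E[Z_i^m]\le 1+\sum_{m\ge 2}\frac{t^m}{m!}E[|Z_i|^m]$. Substituting the hypothesis $E|Z_i|^m\le \tfrac{1}{2}m!\,M^{m-2}v_i$, the factorials cancel and the tail becomes a geometric series $\tfrac{v_i}{2}t^2\sum_{m\ge 2}(tM)^{m-2}$, which sums to $\tfrac{v_i t^2}{2(1-tM)}$ whenever $tM<1$. Using $1+x\le e^x$ then yields $E[e^{tZ_i}]\le \exp\{v_i t^2/[2(1-tM)]\}$, and multiplying over $i$ gives $\prod_i E[e^{tZ_i}]\le \exp\{v t^2/[2(1-tM)]\}$ for any $v\ge\sum_i v_i$.

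Third, I would optimize the resulting bound $P(S>z)\le \exp\{-tz+v t^2/[2(1-tM)]\}$ over $t\in(0,1/M)$. Rather than differentiate, I would simply plug in the near-optimal choice $t=z/(v+Mz)$, for which $1-tM=v/(v+Mz)$, so that the exponent collapses exactly to $-z^2/[2(v+Mz)]$. Finally, applying the identical argument to $-S$ (the hypotheses are symmetric in the sign of $Z_i$) bounds $P(S<-z)$ by the same quantity, and a union bound supplies the prefactor $2$, producing the claimed two-sided inequality.

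The step I expect to require the most care is the summation of the moment series: one must check that the prescribed $t$ indeed satisfies $tM<1$ (it does, since $Mz<v+Mz$ as $v>0$), so that the geometric series is legitimate, and one must track that the $m!$ appearing in the hypothesis precisely cancels the $m!$ from the series expansion of the exponential. This exact cancellation is what forces a sub-exponential (Bernstein) tail of the form $e^{-z^2/[2(v+Mz)]}$ to emerge, rather than a purely sub-Gaussian tail.
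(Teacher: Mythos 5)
Your proof is correct: the Chernoff bound, the exact cancellation of the $m!$ factors leading to the geometric series $\tfrac{v_it^2}{2(1-tM)}$, the choice $t=z/(v+Mz)$ (which indeed satisfies $tM<1$), and the symmetrization giving the factor $2$ all go through as written. Note that the paper does not prove this lemma at all --- it simply restates it from Lemma 2.2.11 of van der Vaart and Wellner (1996) --- and your argument is essentially the same standard moment-generating-function proof given in that reference, so there is nothing to reconcile beyond the routine remark that interchanging the expectation with the power series is justified (for $tM<1$) by the absolute convergence that your moment hypothesis guarantees.
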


As mentioned in \cite{LuoSW2014}, the conditions in Lemma \ref{lemmaa} are satisfied by the centered one-degree chi-square distribution.
\begin{lemma}\label{lemmac}
If $X$ follows $\chi_1^2$ distribution, there exists some constant C, such that for any $m\in\mathbb{N}$, we have
$E|X-E(X)|^m\leq C m!2^m$. Therefore, given any constant scale $\lambda$, $E|\lambda X-E(\lambda X)|^m\leq m!(2\lambda)^{m-2}(4C\lambda^2)$.
\end{lemma}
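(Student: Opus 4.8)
The plan is to reduce everything to a bound on the absolute central moments of a single squared standard normal. Write $X = Z^2$ with $Z \sim N(0,1)$, so that $E(X) = 1$ and the quantity of interest is $E|Z^2 - 1|^m$. The whole argument is elementary once the right pointwise inequality and the right combinatorial estimate are in place.

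First I would dispose of the absolute value by the elementary pointwise bound $|Z^2 - 1| \le \max(Z^2, 1)$, which holds because $Z^2 - 1 \le Z^2$ when $Z^2 \ge 1$ and $1 - Z^2 \le 1$ otherwise. Raising to the $m$-th power and using $\max(a,b)^m = \max(a^m,b^m) \le a^m + b^m$ gives $|Z^2 - 1|^m \le Z^{2m} + 1$, hence $E|X - 1|^m \le E[Z^{2m}] + 1 = (2m-1)!! + 1$, where I invoke the standard Gaussian moment formula $E[Z^{2m}] = (2m-1)!!$.

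The key step is then to bound the double factorial by $m!\,2^m$. Writing $(2m-1)!! = (2m)!/(2^m m!) = \binom{2m}{m} m!/2^m$ and using the central binomial estimate $\binom{2m}{m} \le 4^m$, I get $(2m-1)!! \le 4^m m!/2^m = 2^m m!$. Consequently $E|X-1|^m \le 2^m m! + 1 \le C m!\,2^m$ for any $C \ge 2$ (since $1 \le 2^m m!$ for every $m \ge 1$), which is the first assertion of the lemma.

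For the second assertion there is nothing further to prove beyond pulling the scalar out and rewriting: $E|\lambda X - E(\lambda X)|^m = \lambda^m E|X - E(X)|^m \le C m!\,2^m \lambda^m$, and a direct algebraic check shows $C m!\,2^m \lambda^m = m!\,(2\lambda)^{m-2}(4C\lambda^2)$, which is exactly the form required by the Bernstein condition of Lemma \ref{lemmaa} (with $M = 2\lambda$). I do not expect any genuine obstacle here; the only place needing a moment of care is the passage through the double factorial, where the central-binomial estimate $\binom{2m}{m} \le 4^m$ is precisely what delivers the clean geometric factor $2^m$ rather than the lossy $4^m$ that a cruder convexity bound such as $(Z^2+1)^m \le 2^{m-1}(Z^{2m}+1)$ would produce.
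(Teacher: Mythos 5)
Your proof is correct, and it is worth noting that the paper itself does not prove this lemma at all: it is stated without argument, with the claim attributed to an external reference (the Luo--Song--Witten citation preceding it). So your write-up supplies a self-contained elementary proof where the paper offers only a pointer. Each step checks out: the pointwise bound $|Z^2-1|\le\max(Z^2,1)$, hence $|Z^2-1|^m\le Z^{2m}+1$; the Gaussian moment formula $E[Z^{2m}]=(2m-1)!!$; the estimate $(2m-1)!!\le 2^m m!$; and the algebraic identity $C\,m!\,2^m\lambda^m=m!\,(2\lambda)^{m-2}(4C\lambda^2)$, which is exactly the Bernstein condition of Lemma A.1 with $M=2\lambda$ and $v_i=8C\lambda^2$. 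Your argument even produces the explicit constant $C=2$. One small simplification: the detour through $\binom{2m}{m}\le 4^m$ is unnecessary, since the bound $(2m-1)!!\le 2^m m!$ follows immediately term by term from $2k-1\le 2k$ for $k=1,\dots,m$; this gives the clean factor $2^m$ just as directly, without invoking the central binomial estimate. The only implicit assumption you should flag is $\lambda>0$ (so that $|\lambda|^m=\lambda^m$), which is harmless since $\lambda$ is a scale parameter in the application (it multiplies a $\chi^2_1$ variable in the proof of Theorem A.5).
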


The following lemma (\cite{ZubkovS2013}) gives an upper bound for the tail probability of the binomial distribution.
\begin{lemma}\label{lemmad}
 For a Binomial random variable $X\sim \mbox{B}(n,v)$, for any $1<k<n-1$
 \[ 
  Pr(X\geq k+1)\leq 1- \Phi(\mbox{sign}(k-nv)\sqrt{2nH(v, k/n)}),
 \]
  where $\Phi$ is the CDF of standard Gaussian distribution and
$H(v, k/n)= (k/n)$ $\log(k/nv)+(1-k/n)\log[(1-k/n)/(1-v)]$.
\end{lemma}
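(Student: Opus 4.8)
This is a sharpening of the classical Chernoff bound $\Pr(X\geq k+1)\leq e^{-nH(v,k/n)}$: since $1-\Phi(x)\leq\tfrac12 e^{-x^2/2}$, the asserted quantity $1-\Phi(\sqrt{2nH})$ is strictly smaller than $\tfrac12 e^{-nH}$, so the exponential-moment method alone cannot deliver it and the Gaussian prefactor of order $n^{-1/2}$ must be retained throughout. The first step is to remove all combinatorics by rewriting the tail exactly through the regularized incomplete beta function,
\[
 F(v):=\Pr(X\geq k+1)=I_v(k+1,n-k)=\frac{n!}{k!\,(n-k-1)!}\int_0^v t^k(1-t)^{n-k-1}\,dt ,
\]
so that the claim becomes a statement comparing two smooth functions of $v$.

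The plan is then to fix $k$ and $n$ and compare $F(v)$ with $G(v):=1-\Phi\bigl(s(v)\bigr)$, where $s(v)=\mbox{sign}(k-nv)\sqrt{2nH(v,k/n)}$, over the whole range $v\in(0,1)$. Both functions are increasing in $v$ (one checks $s$ is decreasing, so $G$ increases, and $F'\geq0$), and the pivot $v=k/n$ separates the two sign regimes automatically: $H=0$ there, $G(k/n)=1-\Phi(0)=\tfrac12$, and the two cases $k\gtrless nv$ sit on either side of it, so no separate case analysis is needed. At the pivot the required inequality reduces to the median-type estimate $F(k/n)\leq\tfrac12$, which holds because the $\mbox{B}(n,k/n)$ law has integer mean $k$ and hence median $k$, giving $\Pr(X\geq k+1)\leq\tfrac12$. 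Away from the pivot I would argue by comparing derivatives, using the explicit forms
\[
 F'(v)=\frac{n!}{k!\,(n-k-1)!}\,v^k(1-v)^{n-k-1},\qquad
 G'(v)=-\phi\bigl(s(v)\bigr)\,s'(v),\qquad
 \partial_v H(v,k/n)=\frac{v-k/n}{v(1-v)} ,
\]
with $\phi$ the standard normal density; showing that $G-F$ keeps the correct sign on each side of $k/n$ reduces to an elementary but sharp inequality between the beta density and a Gaussian density, with the normalizing constant $n!/\{k!(n-k-1)!\}$ controlled by two-sided Stirling bounds on $\binom{n}{k}$.

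The hard part will be precisely this last comparison. The Laplace/Chernoff heuristic identifies $F'$ and $G'$ only to exponential order and discards exactly the $O(n^{-1/2})$ prefactor that distinguishes $1-\Phi(\sqrt{2nH})$ from $\tfrac12 e^{-nH}$; to retain it, the Stirling estimates for the binomial coefficient must be two-sided and uniform in $k$, and the pivot estimate must be handled without asymptotics, since the statement is a universal inequality valid for every $1<k<n-1$. Carrying out this uniform, non-asymptotic bookkeeping is the technical core of the result, and it is exactly what is established in \cite{ZubkovS2013}, whose bound we invoke here.
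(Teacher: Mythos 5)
Your proposal ends up in the same place as the paper itself: the paper states this lemma as a known result quoted directly from Zubkov and Serov (2013) without giving any proof, and your argument, after sketching the incomplete-beta representation, the monotonicity of both sides in $v$, and the median bound $F(k/n)\leq\tfrac12$ at the pivot, likewise defers the genuinely hard step (the uniform, non-asymptotic comparison of the beta and Gaussian densities) to that same reference. Since the operative step in both cases is the citation to \cite{ZubkovS2013}, your treatment is correct and essentially identical to the paper's; the surrounding sketch is a reasonable outline of how the cited proof is organized but is not itself a substitute for it.
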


We also restate Lemma 6 in \cite{Barron1998}:
\begin{lemma}\label{lemmab}
Let $B_n$ and $C_n$ be two subsets of the parameter space $\Theta$, and $\phi_n$ be the test function satisfying 
$\phi_n(D_n)\in[0,1]$ for any realization $D_n$ of the data generation. If $\pi(B_n)\leq b_n$, $E_{\theta^*}\phi_n(D_n)\leq b_n'$,
$\sup_{\theta\in C_n}E_{\theta}(1-\phi_n(D_n))\leq c_n$, where $E_\theta(\cdot)$ denotes the expectation with respect to the data generation with true parameter value
being $\theta$. Furthermore, if 
\[P^*\left\{\frac{m(D_n)}{f^*(D_n)}\geq a_n \right\}\geq 1-a_n',\]
where $f^*=f_{\theta^*}$ is the true density function, $m(D_n)=\int_{\Theta} \pi(\theta)f_{\theta}(D_n)d\theta$ is the margin probability of $D_n$. Then,
\[P^*\left(\pi(C_n\cup B_n|D_n)\geq\frac{b_n+c_n}{a_n\delta_n}\right)\leq \delta_n+b_n'+a_n',\]
for any $\delta_n$.
\end{lemma}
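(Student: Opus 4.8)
The plan is to follow the standard \emph{testing plus evidence-lower-bound} scheme for posterior concentration. Write $A:=C_n\cup B_n$ and express the posterior mass as the ratio
\[
\pi(A\mid D_n)=\frac{\int_A \pi(\theta)f_\theta(D_n)\,d\theta}{m(D_n)}.
\]
First I would split the numerator using the test function. Since $\phi_n$ depends on $D_n$ only (not on $\theta$) and $\pi(A\mid D_n)\le 1$, one gets the identity
\[
\pi(A\mid D_n)=\phi_n\,\pi(A\mid D_n)+\frac{N(D_n)}{m(D_n)}\le \phi_n+\frac{N(D_n)}{m(D_n)},\qquad N(D_n):=\int_A(1-\phi_n)\pi(\theta)f_\theta(D_n)\,d\theta .
\]
The role of the hypothesis $P^*\{m(D_n)/f^*(D_n)\ge a_n\}\ge 1-a_n'$ is to control the denominator: on the evidence event $E_1:=\{m(D_n)\ge a_n f^*(D_n)\}$ we may replace $m(D_n)$ by $a_n f^*(D_n)$ and bound $N(D_n)/m(D_n)\le N(D_n)/\{a_n f^*(D_n)\}$, turning the intractable marginal into the true density in the denominator.

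The crucial step is a first-moment computation for $N(D_n)/f^*(D_n)$ under the truth. By Fubini and a change of measure,
\[
E_{\theta^*}\!\left[\frac{N(D_n)}{f^*(D_n)}\right]=\int_A \pi(\theta)\,E_{\theta^*}\!\left[(1-\phi_n)\frac{f_\theta(D_n)}{f^*(D_n)}\right]d\theta=\int_A \pi(\theta)\,E_\theta(1-\phi_n)\,d\theta ,
\]
where the last equality uses $E_{\theta^*}[(1-\phi_n)f_\theta/f^*]=\int(1-\phi_n)f_\theta=E_\theta(1-\phi_n)$. Now I would split the domain, exploiting the two different controls available on $C_n$ and $B_n$: on $C_n$ the type-II error bound $E_\theta(1-\phi_n)\le c_n$ applies, while on $B_n$ I use the trivial bound $E_\theta(1-\phi_n)\le 1$ together with the prior-mass bound $\pi(B_n)\le b_n$. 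Since $\pi(C_n)\le 1$, this gives
\[
E_{\theta^*}\!\left[\frac{N(D_n)}{f^*(D_n)}\right]\le c_n\,\pi(C_n)+\pi(B_n)\le c_n+b_n .
\]
Markov's inequality then yields $P^*\{N(D_n)/f^*(D_n)\ge (b_n+c_n)/\delta_n\}\le \delta_n$.

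Finally I would assemble the three error sources by a union bound. Taking $\phi_n$ to be the indicator of the rejection region (its intended usage), the assumption $E_{\theta^*}\phi_n\le b_n'$ is exactly $P^*(\phi_n=1)\le b_n'$. On the complement of the three bad events $E_1^c$, $\{\phi_n=1\}$, and $\{N(D_n)/f^*(D_n)\ge (b_n+c_n)/\delta_n\}$ — an event of probability at least $1-a_n'-b_n'-\delta_n$ — one has $\phi_n=0$, $m(D_n)\ge a_n f^*(D_n)$, and $N(D_n)/f^*(D_n)<(b_n+c_n)/\delta_n$, so the displayed identity collapses to
\[
\pi(A\mid D_n)=\frac{N(D_n)}{m(D_n)}\le \frac{N(D_n)}{a_n f^*(D_n)}<\frac{b_n+c_n}{a_n\delta_n},
\]
which is precisely the negation of the target event. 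Hence $\{\pi(A\mid D_n)\ge (b_n+c_n)/(a_n\delta_n)\}$ is contained in the union of the three bad events and has probability at most $\delta_n+b_n'+a_n'$. This being a restatement of a known result, there is no deep obstacle; the only genuinely delicate points are the change-of-measure/Fubini step that converts the frequentist moment into the prior-averaged type-II error, and the bookkeeping that keeps the three error contributions additive — which is clean precisely because $\phi_n$ is treated as a $\{0,1\}$-valued test and the denominator is tamed by the evidence event $E_1$.
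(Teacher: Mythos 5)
The paper never proves this lemma itself --- it is restated verbatim from Lemma 6 of \cite{Barron1998} --- so your proposal can only be judged on its own merits. Its core is sound and is exactly the standard route: the change-of-measure/Fubini identity
\[
E_{\theta^*}\!\left[\frac{N(D_n)}{f^*(D_n)}\right]=\int_{C_n\cup B_n}\pi(\theta)E_\theta(1-\phi_n)\,d\theta\le c_n\pi(C_n)+\pi(B_n)\le b_n+c_n,
\]
followed by Markov's inequality and the evidence event $E_1=\{m(D_n)\ge a_n f^*(D_n)\}$, is precisely the mechanism that makes this lemma work, and your bookkeeping of the three error terms $a_n'$, $b_n'$, $\delta_n$ is correct.

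There is, however, one genuine gap: your final assembly is valid only for $\{0,1\}$-valued tests, whereas the lemma assumes $\phi_n(D_n)\in[0,1]$. You declare ``taking $\phi_n$ to be the indicator of the rejection region,'' and then use that on the complement of $\{\phi_n=1\}$ one has $\phi_n=0$. For a randomized test this fails: on $\{\phi_n<1\}$ your identity only gives $\pi(A\mid D_n)=\bigl(N(D_n)/m(D_n)\bigr)/(1-\phi_n)$, which is uncontrolled when $\phi_n$ is close to $1$, and the inequality $\pi(A\mid D_n)\le\phi_n+N(D_n)/m(D_n)$ is vacuous there since its right-hand side exceeds $1$. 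The fix is to replace the union bound over $\{\phi_n=1\}$ by a splitting of the expectation: with $S=\{\pi(A\mid D_n)\ge t\}$ and $t=(b_n+c_n)/(a_n\delta_n)$,
\[
P^*(S)\le E_{\theta^*}[\phi_n]+E_{\theta^*}\bigl[(1-\phi_n)1_S1_{E_1}\bigr]+P^*(E_1^c),
\]
and on $S\cap E_1$ one has $1_S1_{E_1}\le\int_A f_\theta\pi\,d\theta\big/\bigl(t\,a_n f^*(D_n)\bigr)$, so the middle term is bounded by $(b_n+c_n)/(t\,a_n)=\delta_n$ via the same Fubini computation; no indicator structure is needed. (Alternatively, one can reduce to your case by randomization, replacing $\phi_n$ with $1\{U\le\phi_n\}$ for an independent uniform $U$.) In this paper's application (Theorem \ref{genthm}) the tests are indeed indicators, so your proof covers the intended use, but as stated the lemma claims more than your argument delivers.
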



\begin{theorem}\label{genthm}
Consider a linear regression model (\ref{lm}) with the design matrix satisfying conditions $A_1$ and $A_2$. The prior of
$\sigma^2$ follows an inverse-Gamma distribution IG($a,b$), and the prior density of $\bbeta$ is given by 
\[\pi(\bbeta|\sigma^2)=\prod_{i=1}^{p_n} \frac{1}{\sigma}g_\lambda(\beta_i/\sigma).\] 
If there exists a positive constant $u$ such that
\begin{equation}\label{gencon}
\begin{split}
  & 1-\int_{-a_n}^{a_n}g_\lambda(x)dx\leq p_n^{-(1+u)},\\
  & - \log (\inf_{x\in[-E_n,E_n]}g_\lambda(x)) =O(\log p_n),\\
  \end{split}
\end{equation} 
hold for $a_n\asymp \sqrt{s\log p_n/n}/p_n$, then the posterior consistency holds asymptotically, i.e.,
\[ P^{*}\big\{\pi[A_n|D_n]>\exp(-c_1n\epsilon_n^2)\big\}\leq \exp(-c_2n\epsilon_n^2),\]
where $A_n=\{\mbox{At least $\tilde p$ entries of $|\bbeta/\sigma|$ is larger than } a_n\}
  \cup\{\|\bbeta-\bbeta^*\|\geq (3+\sqrt{\lambda_0})\sigma^*\epsilon_n\}\cup\{\sigma^2/\sigma^{*2}>(1+\epsilon_n)/(1-\epsilon_n)
  \mbox{ or }\sigma^2/\sigma^{*2}<(1-\epsilon_n)/(1+\epsilon_n)\}$ with
$\tilde p \asymp s$, $\epsilon_n= M\sqrt{s\log p_n/n}$ for some large constant $M$.
\end{theorem}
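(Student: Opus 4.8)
The plan is to invoke the general posterior-mass bound of Lemma~\ref{lemmab} with parameter $\theta=(\bbeta,\sigma^2)$, decomposing $A_n=A_{n,1}\cup A_{n,2}\cup A_{n,3}$ into the complexity event $A_{n,1}=\{\text{at least }\tilde p\text{ entries of }|\bbeta/\sigma|\text{ exceed }a_n\}$, the $L_2$ event $A_{n,2}=\{\|\bbeta-\bbeta^*\|\ge(3+\sqrt{\lambda_0})\sigma^*\epsilon_n\}$, and the variance event $A_{n,3}=\{\sigma^2/\sigma^{*2}\notin[(1-\epsilon_n)/(1+\epsilon_n),(1+\epsilon_n)/(1-\epsilon_n)]\}$. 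I would take $A_{n,1}$ as the set $B_n$ of negligible prior mass and $A_{n,2}\cup A_{n,3}$ as the set $C_n$ requiring tests; a union bound over the three resulting estimates then yields the claim. Throughout I use $n\epsilon_n^2=M^2 s\log p_n$.

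First I would supply the denominator requirement $P^*\{m(D_n)/f^*(D_n)\ge e^{-cn\epsilon_n^2}\}\ge 1-e^{-c'n\epsilon_n^2}$ of Lemma~\ref{lemmab}. Restricting $m(D_n)$ to the neighborhood $N=\{\|\bbeta_{\xi^*}-\bbeta^*_{\xi^*}\|\le\delta,\ |\beta_j|\le a_n\text{ for }j\notin\xi^*,\ |\sigma^2-\sigma^{*2}|\le\delta\}$, the second line of (\ref{gencon}) lower-bounds $g_\lambda$ on $[-E_n,E_n]$ by $e^{-O(\log p_n)}$, so the prior mass of the $s$ active coordinates is at least $e^{-O(s\log p_n)}=e^{-O(n\epsilon_n^2)}$, while the first line of (\ref{gencon}) makes $\{|\beta_j|\le a_n\ \forall j\notin\xi^*\}$ carry prior probability $\ge(1-p_n^{-(1+u)})^{p_n}\ge 1-p_n^{-u}$. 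On $N$ the log-likelihood ratio is controlled by Bernstein's inequality applied to the centered chi-square terms (Lemmas~\ref{lemmaa}--\ref{lemmac}), delivering the bound with the stated probability.

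Next I would treat the three numerator pieces. For $A_{n,1}$ no test is needed: under the prior each coordinate has $\pi(|\beta_j/\sigma|>a_n)\le p_n^{-(1+u)}$ independently, so the count of large coordinates is dominated by a $\mathrm{B}(p_n,p_n^{-(1+u)})$ variable and Lemma~\ref{lemmad} gives $\pi(A_{n,1})\le e^{-c\tilde p\,u\log p_n}$, which is $e^{-\Theta(n\epsilon_n^2)}$ since $\tilde p\asymp s$. For $A_{n,2}$ I would work on the sieve $A_{n,1}^c$, where the calibration $a_n\asymp\sqrt{s\log p_n/n}/p_n$ renders the sub-threshold coordinates harmless: condition $A_1(1)$ gives $\lambda_{\max}(\bX^T\bX)\le np_n$, hence $\|\bX\bbeta_{\text{small}}\|\lesssim\sqrt{np_n}\sqrt{p_n}\,a_n\asymp\sqrt{s\log p_n}=\sqrt n\,\epsilon_n/M$, so $\bbeta$ behaves like an $O(s)$-sparse vector and $A_1(3)$ yields the identifiability $\|\bX(\bbeta-\bbeta^*)\|^2\ge n\lambda_0\|\bbeta-\bbeta^*\|^2$. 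Standard Gaussian likelihood-ratio tests then have type I and type II errors of order $e^{-cn\epsilon_n^2}$, and a union bound over the at most $\binom{p_n}{\tilde p}\le e^{O(n\epsilon_n^2)}$ active models keeps the aggregate error at $e^{-\Theta(n\epsilon_n^2)}$ once $M$ is large. The event $A_{n,3}$ follows from the concentration of $\|\by-\bX\bbeta\|^2$ around $n\sigma^{*2}$ via Lemmas~\ref{lemmaa}--\ref{lemmac}.

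The main obstacle, as flagged in the heuristic discussion preceding the theorem, is the intractable geometry of the overfitting set in high dimensions, which cannot be covered by tests directly. The resolution is to avoid testing against it altogether: the strong prior concentration in the first line of (\ref{gencon}) forces $\pi(A_{n,1})$ to be exponentially small, confining the posterior to an effectively low-dimensional sieve on which ordinary likelihood-ratio testing succeeds. The delicate bookkeeping is then to match exponents across the three bounds---ensuring that the binomial rate $\tilde p\,u\log p_n$ and the testing rate $cn\epsilon_n^2$ both dominate the entropy term $O(s\log p_n)$ and the $e^{-c'n\epsilon_n^2}$ loss incurred in the denominator---which is precisely where the freedom to enlarge $M$ and the hypothesis $u>0$ enter.
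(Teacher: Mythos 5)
Your proposal is correct and follows essentially the same route as the paper's own proof: both invoke Lemma~\ref{lemmab} with $B_n$ taken to be the prior-negligible complexity event (bounded via the binomial tail in Lemma~\ref{lemmad}), cover the remaining set $C_n$ by tests on the sieve of models of size $O(\tilde p + s)$ with a union bound absorbing the $\binom{p_n}{\tilde p+s}$ entropy, and establish the denominator bound through prior concentration on a shrinking neighborhood of $(\bbeta^*,\sigma^{*2})$ using both lines of (\ref{gencon}). The paper merely makes your ``standard Gaussian likelihood-ratio tests'' explicit, using the projected residual statistic $\by^T(I-H_\xi)\by$ for the variance component and the restricted OLS estimator $(\bX_\xi^T\bX_\xi)^{-1}\bX_\xi^T\by$ for the coefficients.
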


\begin{proof}
We apply Lemma \ref{lemmab} to prove this theorem. 
Define $C_n=A_n\backslash B_n$, where
$B_n=\{$At least $\tilde p$ entries of $|\bbeta/\sigma|$ is larger than $a_n\}$, 
$\tilde p\leq\bar p-s$, $\tilde p \prec n\epsilon_n^2$, and its specific choice will be given below.
The proof consists of three parts:

Firstly, we will show the existence of a testing function $\phi_n$ such that 
\begin{equation}\label{testfn}
 \begin{split}
  E_{(\bbeta^*,\sigma^{^*2})}(\phi_n)\leq \exp(-c_3n\epsilon_n^2),\quad \mbox{and}
  \sup_{(\bbeta,\sigma^2)\in C_n} E_{(\bbeta,\sigma^2)}(1-\phi_n)\leq \exp(-c_3'n\epsilon_n^2);
 \end{split}
\end{equation} for some positive constants $c_3$ and $c_3'$.

Secondly, we will show that for some $c_4>0$,
\begin{equation}\label{badset}
 \pi(B_n)<e^{-c_4n\epsilon_n^2}.
\end{equation}

Thirdly, we will show that 
\begin{equation}\label{priorconcern}
\lim_n P^*\left\{\frac{m(D_n)}{f^*(D_n)}\geq \exp(-c_5n\epsilon_n^2) \right\}>1-\exp\{-c_5'n\epsilon_n^2\},
\end{equation} for some positive $0<c_5<\min(c_3',c_4)$.
Therefore, the proof can be concluded by Lemma \ref{lemmab}.

{\bf Part I:}
We consider the following  testing function 
$\phi_n=\max\{\phi'_n,\tilde\phi_n\}$, where
\[
\begin{split}
\phi'_n&=\max_{\{\xi\supseteq\xi^*, |\xi|\leq \tilde p+s\}}1\{\big|\by^T(I-H_{\xi})\by/(n-|\xi|)\sigma^{*2}-1\big|\geq \epsilon_n\}, \mbox{ and }\\
\tilde\phi_n &= \max_{\{\xi\supseteq\xi^*, |\xi|\leq \tilde p+s\}}
1\{\|(\bX_\xi^T\bX_\xi)^{-1}\bX_\xi^T\by-\bbeta^*_\xi)\|\geq \sigma^*\epsilon_n\},
\end{split}
\]
and $H_\xi = \bX_{\xi}(\bX_{\xi}^T\bX_{\xi})^{-1}\bX_{\xi}^T$ is the hat matrix corresponding to $\xi$.

For any $\xi$ that satisfies $\xi\supseteq\xi^*, |\xi|\leq \tilde p+s$, we have
\begin{equation}\label{test1}
\begin{split}
   &E_{(\bbeta^*,\sigma^{*2})}1\{\big|\by^T(I-H_{\xi})\by/(n-|\xi|)\sigma^{*2}-1\big|\geq \epsilon_n\}\\
  = &Pr(|\chi^2_{n-|\xi|}-(n-|\xi|)|\geq (n-|\xi|)\epsilon_n)
  \leq \exp(-\hat c_3n\epsilon_n^2)
\end{split}
\end{equation}
for some small constant $\hat c_3$,
where $\chi^2_{p}$ denotes a chi-square distribution with degree of freedom $p$, and the last inequality follows from Bernstein inequality (Lemma \ref{lemmaa} and \ref{lemmac}) 
and the facts $\epsilon\prec1$, $s+\tilde p\prec n$.

Following similar arguments as in the proof of Lemma 1 in \cite{ArmaganDL2013}, we have that 
for any $\xi$ satisfying  $\xi\supseteq\xi^*, |\xi|\leq \tilde p+s\prec n\epsilon_n^2$,
\begin{equation}\label{test2}
\begin{split}
 &E_{(\bbeta^*,\sigma^{*2})}1\{\|(\bX_\xi^T\bX_\xi)^{-1}\bX_\xi^T\by-\bbeta^*_\xi\|\geq \sigma^*\epsilon_n|\bbeta^*,\sigma^{*2}\}\\
=&E_{(\bbeta^*,\sigma^{*2})}1\{\|(\bX_\xi^T\bX_\xi)^{-1}\bX_\xi^T\bvarepsilon\|\geq \epsilon_n\}\leq Pr(\chi^2_{|\xi|}\geq n \lambda_0\epsilon_n^2)\\
\leq&
\exp(-\tilde c_3n\epsilon_n^2),
\end{split}
\end{equation}
for  some $\tilde c_3>0$. Note that the last inequality holds due to Bernstein inequality and large value of $M$.

Combining (\ref{test1}) and (\ref{test2}),  we obtain that
\begin{equation}
\begin{split}\label{test3}
E_{(\bbeta^*,\sigma^{*2})}\phi_n
& \leq E_{(\bbeta^*,\sigma^{*2})}\sum_{\{\xi\supseteq\xi^*, |\xi|\leq \tilde p+s\}}\big(1\{\big|\by^T(I-H_{\xi})\by/(n-|\xi|)\sigma^{*2}-1\big|\geq \epsilon_n\}\\
 &\qquad+1\{\|(\bX_\xi^T\bX_\xi)^{-1}\bX_\xi^T\by-\bbeta^*_\xi)\|\geq \epsilon_n\}\big)\\
& <(\tilde p+s){p_n \choose \tilde p+s}[\exp(-c_3n\epsilon_n^2)+\exp(-c_3'n\epsilon_n^2)].
\end{split}
\end{equation}
We set $\tilde p = \lfloor\min\{\hat c_3,\tilde c_3\}n\epsilon_n^2/(2\log p_n)\rfloor$. (Since $\bar p\log p_n\succ n\epsilon_n^2$,
 $\tilde p$ always exists.) 
 Hence, we have  $\log(\tilde p+s)+ (\tilde p+s)\log p_n <(2\min\{\hat c_3,\tilde c_3\}n\epsilon_n^2)/3$,
 which leads to $E_{(\bbeta^*,\sigma^{*2})}\phi_n\leq \exp(- c_3n\epsilon_n^2)$ for some 
 fixed $c_3$.
 
Now we study $\sup_{(\bbeta,\sigma^2)\in C_n}E_{(\bbeta,\sigma^2)}(1-\phi_n)$.
Let $C_n\subset \hat C_n\cup\tilde C_n$, where
\[
\begin{split}\hat C_n=&\{
\sigma^2/\sigma^{*2}>(1+\epsilon_n)/(1-\epsilon_n)\mbox{ or }\sigma^2/\sigma^{*2}<(1-\epsilon_n)/(1+\epsilon_n)\}\\
       &\cap\{\mbox{at most $\tilde p$ entries of $|\bbeta/\sigma|$ is larger than } 
a_n\},\\
\tilde C_n=&\{\|\bbeta-\bbeta^*\|>(3+\sqrt{\lambda_0})\sigma^*\epsilon_n, \sigma^2/\sigma^{*2}\leq(1+\epsilon_n)/(1-\epsilon_n)\\
          & \mbox{ and at most $\tilde p$ entries of $|\bbeta/\sigma|$ is larger than } 
a_n\}.
\end{split}
\]
Then we have
\[
\begin{split}
\sup_{(\bbeta,\sigma^2)\in C_n}E_{(\bbeta,\sigma^2)}(1-\phi_n)
& = \sup_{(\bbeta,\sigma^2)\in C_n}E_{(\bbeta,\sigma^2)}\min\{1-\phi'_n,1-\tilde\phi_n\}\\
& \leq \max\{\sup_{(\bbeta,\sigma^2)\in \hat C_n}E_{(\bbeta,\sigma^2)}(1-\phi'_n), \sup_{(\bbeta,\sigma^2)\in \tilde C_n}E_{(\bbeta,\sigma^2)}(1-\tilde\phi_n)\}.
\end{split}\]
Let $\tilde\xi=\tilde\xi(\bbeta)=\{k: |\beta_k/\sigma|>a_n\}\cup\xi^*$,
and $\tilde\xi^c=\{1,\dots,p_n\}\backslash \tilde\xi$.
 Hence, for any
$(\bbeta,\sigma^2)\in\tilde C_n\cup \hat C_n$, $|\tilde\xi(\bbeta)|\leq \tilde p + s\leq \bar p$, 
and $\|\bX_{{\tilde\xi}^c}\bbeta_{{\tilde\xi}^c}\|\leq \sqrt{np}\|\bbeta_{{\tilde\xi}^c}\|\leq\sqrt{n}\sqrt{\lambda'_0}\sigma\epsilon_n$ given a large value of $M$.
\[
\small
\begin{split}
 &\sup_{(\bbeta,\sigma^2)\in C'_n}E_{\bbeta}(1-\phi'_n)\\
&=\sup_{(\bbeta,\sigma^2)\in C'_n}E_{(\bbeta,\sigma^2)}\min_{\xi\supseteq\xi^*, |\xi|\leq \tilde p+s}1\{\big|\by^T(I-H_{\xi})\by/(n-|\tilde\xi|)\sigma^{*2}-1\big|\leq \epsilon_n\}\\
&\leq \sup_{(\bbeta,\sigma^2)\in C'_n}E_{(\bbeta,\sigma^2)}1\{\big|\by^T(I-H_{\tilde\xi})\by/(n-|\tilde\xi|)\sigma^{*2}-1\big|\leq \epsilon_n\}\\
&=\sup_{(\bbeta,\sigma^2)\in C'_n}Pr\{|\sigma^2(\bX_{{\tilde\xi}^c}\bbeta_{{\tilde\xi}^c}/\sigma+\bvarepsilon)^T (I-H_{\tilde\xi})(\bX_{{\tilde\xi}^c}\bbeta_{{\tilde\xi}^c}/\sigma+\bvarepsilon)/[(n-|\tilde\xi|)\sigma^{*2}]-1|\leq \epsilon_n \}\\
&\leq\sup_{(\bbeta,\sigma^2)\in C'_n} Pr\{\sigma^2(\bX_{{\tilde\xi}^c}\bbeta_{{\tilde\xi}^c}/\sigma+\bvarepsilon)^T (I-H_{\tilde\xi})(\bX_{{\tilde\xi}^c}\bbeta_{{\tilde\xi}^c}/\sigma+\bvarepsilon)/[(n-|\tilde\xi|)\sigma^{*2}]\in[1-\epsilon_n, 1+ \epsilon_n] \}\\
&\leq\sup_{(\bbeta,\sigma^2)\in C'_n} Pr\{ (\bX_{{\tilde\xi}^c}\bbeta_{{\tilde\xi}^c}/\sigma+\bvarepsilon)^T (I-H_{\tilde\xi})(\bX_{{\tilde\xi}^c}\bbeta_{{\tilde\xi}^c}/\sigma+\bvarepsilon)/(n-|\tilde\xi|)\notin[1-\epsilon_n,1+\epsilon_n]\}\\
&\leq \sup_{(\bbeta,\sigma^2)\in C'_n} Pr\{ |\chi^2_{n-|\tilde\xi|}(k)-(n-|\tilde\xi|)|\geq (n-|\tilde \xi|)\epsilon_n\}\\
&\leq 
\exp(-\hat c_3'n\epsilon_n^2),
\end{split}
\]
for some $\hat c_3'>0$. 
Note that $(\bX_{{\tilde\xi}^c}\bbeta_{{\tilde\xi}^c}/\sigma+\bvarepsilon)^T (I-H_{\tilde\xi})(\bX_{{\tilde\xi}^c}\bbeta_{{\tilde\xi}^c}/\sigma+\bvarepsilon)$
follows a noncentral $\chi^2$ distribution $\chi_{n-|\tilde\xi|}(k)$ with the noncentral parameter $k =\bbeta_{{\tilde\xi}^c}^T\bX_{{\tilde\xi}^c}^T (I-H_{\tilde\xi})\bX_{{\tilde\xi}^c}\bbeta_{{\tilde\xi}^c}
/\sigma^2\leq (\sqrt{n}\sqrt{\lambda'_0}\epsilon_n/4)^2$.
Since the noncentral $\chi^2$ distribution is a sub-exponential, the last inequality follows from the Bernstein inequality as well.
Also, we have  
\[
\small
\begin{split}
 &\sup_{(\bbeta,\sigma^2)\in\tilde C_n}E_{(\bbeta,\sigma^2)}(1-\tilde\phi_n) 
  =\sup_{(\bbeta,\sigma^2)\in\tilde C_n}E_{(\bbeta,\sigma^2)}\min_{|\xi|\leq \tilde p+s}1\{\|(\bX_{\xi}^T\bX_{\xi})^{-1}\bX_{\xi}^T\by-\bbeta^*_{\xi}\|\leq \sigma^*\epsilon_n\}\\
  &\leq\sup_{(\bbeta,\sigma^2)\in\tilde C_n}E_{(\bbeta,\sigma^2)}1\{\|(\bX_{\tilde\xi}^T\bX_{\tilde\xi})^{-1}\bX_{\tilde\xi}^T\by-\bbeta^*_{\tilde\xi}\|\leq \sigma^*\epsilon_n\}\\
  &=  \sup_{(\bbeta,\sigma^2)\in\tilde C_n}Pr\{\|(\bX_{\tilde\xi}^T\bX_{\tilde\xi})^{-1}\bX_{\tilde\xi}^T\by-\bbeta^*_{\tilde\xi}\|\leq \sigma^*\epsilon_n|\bbeta,\sigma^2\}\\
  &=\sup_{(\bbeta,\sigma^2)\in\tilde C_n}Pr\{\|(\bX_{\tilde\xi}^T\bX_{\tilde\xi})^{-1}\bX_{\tilde\xi}^T\sigma\bvarepsilon+\bbeta_{\tilde\xi}+(\bX_{\tilde\xi}^T\bX_{\tilde\xi})^{-1}\bX_{\tilde\xi}^T\bX_{{\tilde\xi}^c}\bbeta_{{\tilde\xi}^c}-\bbeta^*_{\tilde\xi}\|\leq \sigma^*\epsilon_n\}\\
  &\leq\sup_{(\bbeta,\sigma^2)\in\tilde C_n}Pr\{\|(\bX_{\tilde\xi}^T\bX_{\tilde\xi})^{-1}\bX_{\tilde\xi}^T\sigma\bvarepsilon\|
  \geq \|\bbeta_{\tilde\xi}-\bbeta^*_{\tilde\xi}\|-(\bX_{\tilde\xi}^T\bX_{\tilde\xi})^{-1}\bX_{\tilde\xi}^T\bX_{{\tilde\xi}^c}\bbeta_{{\tilde\xi}^c}-\sigma^*\epsilon_n\}\\
  &=\sup_{(\bbeta,\sigma^2)\in\tilde C_n}Pr\{\|(\bX_{\tilde\xi}^T\bX_{\tilde\xi})^{-1}\bX_{\tilde\xi}^T\bvarepsilon\|
  \geq [\|\bbeta_{\tilde\xi}-\bbeta^*_{\tilde\xi}\|-\sigma^*\epsilon_n-(\bX_{\tilde\xi}^T\bX_{\tilde\xi})^{-1}\bX_{\tilde\xi}^T\bX_{{\tilde\xi}^c}\bbeta_{{\tilde\xi}^c}]/\sigma\}\\
  &\leq\sup_{(\bbeta,\sigma^2)\in\tilde C_n}Pr\{\|(\bX_{\tilde\xi}^T\bX_{\tilde\xi})^{-1}\bX_{\tilde\xi}^T\bvarepsilon\|
  \geq \epsilon_n\}\leq 
  \exp(-\tilde c_3n\epsilon_n^2),
 \end{split}
\]
where above
inequalities hold asymptotically because
$\|\bbeta_{\tilde\xi}-\bbeta^*_{\tilde\xi}\|\geq \|\bbeta-\bbeta^*\|-p_n(\sqrt{\lambda_0}\epsilon_n\sigma/p_n)$,
$\sigma^*/\sigma\geq\sqrt{(1-\epsilon_n)/(1+\epsilon_n)}$,
 and the fact that
\[
\begin{split}
&\|(\bX_{\tilde\xi}^T\bX_{\tilde\xi})^{-1}\bX_{\tilde\xi}^T\bX_{{\tilde\xi}^c}\bbeta_{{\tilde\xi}^c}/\sigma\|
\leq \sqrt{\lambda_{\max}\left((\bX_{\tilde\xi}^T\bX_{\tilde\xi})^{-1}\right)}\|\bX_{{\tilde\xi}^c}\bbeta_{{\tilde\xi}^c}\| \\
\leq&\sqrt{1/n\lambda_0}\sqrt{n\lambda'_0}\epsilon_n\sigma\leq \epsilon_n.
\end{split}
\]
Hence, (\ref{testfn}) is proved.
\vskip 0.1in
{\bf Part II:}
Define $N=|\{i:|\beta_i/\sigma|\geq a_n\}|$,
thus $N \sim \mbox{Binomial}(p_n, v_n)$, where $v_n=\int_{|x|\geq a_n} g_\lambda(x)dx$ and $g_\lambda(x)$ is the prior density
function of $\beta_i/\sigma$.
Thus $\pi(B_n) = Pr(\mbox{Binomial}(p_n, v_n)\geq \tilde p)$. By Lemma \ref{lemmad}, we have 
\[
\begin{split}
 &\pi(B_n)\leq 1-\Phi(\sqrt{2p_nH[v_n,(\tilde p-1)/p_n]})\leq\frac{\exp\{-p_nH[v_n,(\tilde p-1)/p_n]\}}{\sqrt{2\pi}\sqrt{2p_nH[v_n,(\tilde p-1)/p_n]}},\\
 &p_nH[v_n,(\tilde p-1)/p_n]\\
 = &(\tilde p-1)\log[(\tilde p-1)/(p_nv_n)]+(p_n-\tilde p+1)\log[(p_n-\tilde p+1)/(p_n-p_nv_n)].
\end{split}
\]
Therefore, to prove (\ref{badset}), it is sufficient to show that $p_nH[v_n,(\tilde p-1)/p_n] \geq O(n\epsilon_n^2)$. 
Since $1/(p_nv_n)\geq O(p_n^{u})$, $\tilde p\log p_n^u \asymp n\epsilon_n^2$ (if $M$ is sufficiently large), 
$(\tilde p-1)\log[(\tilde p-1)/(p_nv_n)] \asymp n\epsilon_n^2$, and 
$(p_n-\tilde p+1)\log[(p_n-\tilde p+1)/(p_n-p_nv_n)]\approx \tilde p-\tilde p^2/p_n\prec n\epsilon_n^2$.
Hence we have $p_nH[v_n,(\tilde p-1)/p_n] = O(n\epsilon_n^2)$.

\vskip 0.1in
{\bf Part III:}
Now we prove (\ref{priorconcern}).
Because 
\[
m(D_n)/f^*(D_n)=\int\frac{(\sigma^*)^n\exp\{-\|\by-\bX\bbeta\|^2/2\sigma^2\}}{\sigma^n\exp\{-\|\by-\bX\bbeta^*\|^2/2\sigma^{*2}\}}\pi(\bbeta,\sigma^2)d\bbeta d\sigma^2,
\]
 it is sufficient to show that 
\[
\begin{split}
&P^*\big(\pi(\{\|\by-\bX\bbeta\|^2/2\sigma^2+n\log(\sigma/\sigma^*)<\|\by-\bX\bbeta^*\|^2/2\sigma^{*2}+c_5n\epsilon_n^2/2\})\\
&\geq e^{-c_5n\epsilon_n^2/2}\big)
\geq 1-\exp\{-c_5'n\epsilon_n^2\},
\end{split}
\]
for some sufficiently small positive $c_5$.

Note that
$P^*(\Omega=\{\|\bvarepsilon\|^2\leq n (1+ \hat c_5)\mbox{ and }\|\bvarepsilon^T\bX\|_{\infty}\leq \hat c_5n\epsilon_n\})\geq 1-\exp\{-c_5'n\epsilon_n^2\}$
for some $\hat c_5$, by the properties of chi-square distribution and normal distribution, 
On the event of $\Omega$, it is easy to see that
 $\{\|\by-\bX\bbeta\|^2/2\sigma^2+n\log(\sigma/\sigma^*)<\|\by-\bX\bbeta^*\|^2/2\sigma^{*2}+c_5n\epsilon_n^2/2\}$ is a super-set of
 $\{\sigma\in[\sigma^*,\sigma^*+\eta_1\epsilon_n^2], \mbox{ and } \|(\bbeta^*-\bbeta)/\sigma\|_1<2\eta_2\epsilon_n \}$ for some small constants $\eta_1$ and $\eta_2$. 

In addition, we have
\begin{equation}\label{pp}
\begin{split}
&-\log\pi(\{\sigma\in[\sigma^*,\sigma^*+\eta_1\epsilon_n^2], \mbox{ and } \|(\bbeta^*-\bbeta)/\sigma\|_1<2\eta_2\epsilon_n \})\\
=&-\log\pi(\{0\leq\sigma^2-\sigma^{*2}\leq\eta_1\epsilon_n^2]\})-\log\pi(\{\|(\bbeta^*-\bbeta)/\sigma\|_1<2\eta_2\epsilon_n\})
\end{split}
\end{equation}

Given the fact that the inverse Gamma density is always bounded away from zero around $\sigma^{*2}$,
hence the first term in (\ref{pp}) satisfies $-\log\pi(\{0\leq\sigma^2-\sigma^{*2}\leq\eta_1\epsilon_n^2]\})\leq-\log(\eta_1\epsilon_n^2)
-\log(\min_{\sigma\in[\sigma^*,\sigma^*+\eta_1\epsilon_n^2]}\pi(\sigma^2))<\mbox{constant}+\log(1/\epsilon_n^2)\leq \delta_1 n\epsilon_n^2$,
where $\delta_1$ can be an arbitrary constant if we choose $M$ to be sufficiently large.
  
For the second term in (\ref{pp}),
\[
\begin{split}
&\{\|(\bbeta^*-\bbeta)/\sigma\|_1<2\eta_2\epsilon_n\}\supset\{|\beta_j/\sigma|\leq\eta_2\epsilon_n/p_n\mbox{ for all } j\notin\xi^*\}\\
&\cap\{\beta_j/\sigma\in[\beta_j^*/\sigma-\eta_2\epsilon_n/ s,\beta_j^*/\sigma+\eta_2\epsilon_n/ s]
\mbox{ for all }j\in\xi^*\},
\end{split}
\]
and 
\begin{equation}\label{priorfalse}
\begin{split}
&\pi(\{|\beta_j/\sigma|\leq\eta_2\epsilon_n/p_n\mbox{ for all } j\notin\xi^*\})\\
\geq&\pi(\{|\beta_j/\sigma|\leq a_n\mbox{ for all } j\notin\xi^*\})\geq (1-p_n^{-1-u})^{p_n}\rightarrow 1,
\end{split}
\end{equation}
given a large value of $M$.
For those $\beta_j^*\neq0$,
\begin{equation}\label{priortrue}
\begin{split}
&\pi\big(\{\beta_j/\sigma\in[\beta_j^*/\sigma\pm\eta_2\epsilon_n/ s]\mbox{ for all }j\in\xi^*\big \}\big)
\geq [2\eta_2\epsilon_n\inf_{x\in[-E,E]} g_\lambda(x)/ s]^s ,
\end{split},
\end{equation}
the inequality holds because $|\beta^*_j/\sigma|+\eta\epsilon_n/ s\leq E$ which is implied by
$\sigma^2<\sigma^{*2}+\eta'\epsilon_n^2$ and $|\beta_j^*/\sigma^*|\leq \gamma E$.
By (\ref{priorfalse}), (\ref{priortrue}) and condition (\ref{gencon}), (\ref{priorconcern}) holds.

\end{proof}

\begin{theorem}\label{genpred}
If all the conditions of theorem \ref{genthm} except condition $A_1$(3) hold,  
then posterior prediction for observed data is consistent, i.e.,
\[ P^{*}\big\{\pi[\|\bX\bbeta-\bX\bbeta^*\|\geq c_0\sqrt n\sigma^*\epsilon_n|D_n]<1-\exp(-c_1n\epsilon_n^2)\big\}\leq \exp(-c_2n\epsilon_n^2),\]
for some $c_0$, $c_1$ and $c_2$.
\end{theorem}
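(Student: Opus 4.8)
The plan is to reuse the three–part testing scheme of the proof of Theorem \ref{genthm}, changing only the construction of the test statistic so that it becomes sensitive to the \emph{fitting} error rather than to the $L_2$ estimation error. First I would note that, by Theorem \ref{genthm}, the posterior already concentrates on the good set $A_n^c$, on which $\bbeta$ has at most $\tilde p$ coordinates with $|\beta_j/\sigma|>a_n$ and $\sigma^2/\sigma^{*2}$ lies in the range $[(1-\epsilon_n)/(1+\epsilon_n),(1+\epsilon_n)/(1-\epsilon_n)]$; since $P^*\{\pi(A_n\mid D_n)>e^{-c_1 n\epsilon_n^2}\}\le e^{-c_2 n\epsilon_n^2}$, it suffices to control the posterior mass of $C_n^{\mathrm{pred}}:=\{\|\bX\bbeta-\bX\bbeta^*\|\ge c_0\sqrt n\,\sigma^*\epsilon_n\}\cap A_n^c$. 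One cannot simply read this off the $L_2$ bound $\|\bbeta-\bbeta^*\|\le(3+\sqrt{\lambda_0})\sigma^*\epsilon_n$ of Theorem \ref{genthm}: the only available upper bound on $\lambda_{\max}(\bX_{\tilde\xi}^T\bX_{\tilde\xi})$ is the trace bound $n|\tilde\xi|\asymp ns$ coming from $A_1(1)$, which loses a factor $\sqrt s$ and yields only the sub-optimal rate $\sqrt{ns}\,\sigma^*\epsilon_n$. The fitting error therefore has to be tested directly.

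For each $\xi\supseteq\xi^*$ with $|\xi|\le\tilde p+s$ let $\hat\bbeta_\xi=(\bX_\xi^T\bX_\xi)^{-1}\bX_\xi^T\by$, and set
\[
\phi_n^{\mathrm{pred}}=\max_{\{\xi\supseteq\xi^*,\,|\xi|\le\tilde p+s\}} 1\{\|\bX_\xi(\hat\bbeta_\xi-\bbeta^*_\xi)\|\ge c\,\sigma^*\sqrt n\,\epsilon_n\},
\]
the fitting-metric analogue of $\tilde\phi_n$. For the Type I error I would use that, under $(\bbeta^*,\sigma^{*2})$, one has $\bX_\xi(\hat\bbeta_\xi-\bbeta^*_\xi)=\sigma^*H_\xi\bvarepsilon$, so $\|\bX_\xi(\hat\bbeta_\xi-\bbeta^*_\xi)\|^2/\sigma^{*2}\sim\chi^2_{|\xi|}$; since $|\xi|\le\tilde p+s\asymp s\prec n\epsilon_n^2$, Bernstein's inequality (Lemmas \ref{lemmaa}--\ref{lemmac}) gives $\Pr(\chi^2_{|\xi|}\ge c^2 n\epsilon_n^2)\le e^{-c' n\epsilon_n^2}$, and the union bound over the $\binom{p_n}{\tilde p+s}$ candidate models contributes only a factor $e^{(\tilde p+s)\log p_n}=e^{O(n\epsilon_n^2)}$, so that $E_{(\bbeta^*,\sigma^{*2})}\phi_n^{\mathrm{pred}}\le e^{-c_3 n\epsilon_n^2}$ once $c$ is large.

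The crux is the Type II error $\sup_{C_n^{\mathrm{pred}}}E_{(\bbeta,\sigma^2)}(1-\phi_n^{\mathrm{pred}})$. Fixing $\bbeta\in C_n^{\mathrm{pred}}$ with $\tilde\xi=\{j:|\beta_j/\sigma|>a_n\}\cup\xi^*$ (so $|\tilde\xi|\le\tilde p+s$), I would lower–bound $\phi_n^{\mathrm{pred}}$ by its $\xi=\tilde\xi$ term and use the identity $\bX_{\tilde\xi}(\hat\bbeta_{\tilde\xi}-\bbeta^*_{\tilde\xi})=\bX_{\tilde\xi}(\bbeta_{\tilde\xi}-\bbeta^*_{\tilde\xi})+H_{\tilde\xi}\bX_{\tilde\xi^c}\bbeta_{\tilde\xi^c}+\sigma H_{\tilde\xi}\bvarepsilon$ under $(\bbeta,\sigma^2)$. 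Because $\bbeta^*_{\tilde\xi^c}=0$, the true fitting error splits as $\bX(\bbeta-\bbeta^*)=\bX_{\tilde\xi}(\bbeta_{\tilde\xi}-\bbeta^*_{\tilde\xi})+\bX_{\tilde\xi^c}\bbeta_{\tilde\xi^c}$, whose tail block is negligible: every coordinate of $\bbeta_{\tilde\xi^c}/\sigma$ is at most $a_n$, so by $A_1(1)$ and $a_n\asymp\sqrt{s\log p_n/n}/p_n$ we get $\|\bX_{\tilde\xi^c}\bbeta_{\tilde\xi^c}\|\le\sqrt{np_n}\,\sqrt{p_n}\,a_n\sigma\le\sqrt n\,\epsilon_n\sigma/M$. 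Hence $\|\bX_{\tilde\xi}(\bbeta_{\tilde\xi}-\bbeta^*_{\tilde\xi})\|\ge(c_0-1/M)\sqrt n\,\sigma^*\epsilon_n$, while the two contamination terms satisfy $\|H_{\tilde\xi}\bX_{\tilde\xi^c}\bbeta_{\tilde\xi^c}\|\le\sqrt n\,\epsilon_n\sigma/M$ and $\|\sigma H_{\tilde\xi}\bvarepsilon\|^2/\sigma^2\sim\chi^2_{|\tilde\xi|}$. Choosing $c_0$ large (and $c<c_0$), the event $\{\phi_n^{\mathrm{pred}}=0\}$ forces $\sigma\|H_{\tilde\xi}\bvarepsilon\|\gtrsim\sqrt n\,\sigma^*\epsilon_n$, i.e. $\chi^2_{|\tilde\xi|}\gtrsim n\epsilon_n^2$, whose probability is again $\le e^{-c_3' n\epsilon_n^2}$ by Bernstein and is uniform in $\bbeta$ since the bound depends on $\bbeta$ only through $|\tilde\xi|\le\tilde p+s$. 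With this test in hand I would invoke Lemma \ref{lemmab} exactly as in Theorem \ref{genthm}, keeping Part II ($\pi(B_n)\le e^{-c_4 n\epsilon_n^2}$) and Part III (the evidence lower bound $m(D_n)/f^*(D_n)\ge e^{-c_5 n\epsilon_n^2}$) verbatim, to obtain $P^*\{\pi(C_n^{\mathrm{pred}}\cup B_n\mid D_n)>e^{-c n\epsilon_n^2}\}\le e^{-c'' n\epsilon_n^2}$, and then combine with the smallness of $\pi(A_n\mid D_n)$ to finish. The main obstacle is precisely this Type II step: one must verify that a large true fitting error propagates into a large OLS fitted discrepancy $\|\bX_{\tilde\xi}(\hat\bbeta_{\tilde\xi}-\bbeta^*_{\tilde\xi})\|$ after subtracting the $\tilde\xi^c$-block contamination and the projection noise $\sigma H_{\tilde\xi}\bvarepsilon$, both of which must be shown to be $o(\sqrt n\,\sigma^*\epsilon_n)$.
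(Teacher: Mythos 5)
Your proposal is correct and follows essentially the same route as the paper: the paper's proof likewise reuses the three-part testing argument of Theorem \ref{genthm} via Lemma \ref{lemmab} (keeping Parts II and III verbatim), changing only the test to $\tilde\phi_n = \max_{\{\xi\supset\xi^*,\,|\xi|\leq\tilde p+s\}} 1\{\|\bX_\xi(\bX_\xi^T\bX_\xi)^{-1}\bX_\xi^T\by-\bX_\xi\bbeta^*_\xi\|\geq c_0\sigma^*\sqrt n\epsilon_n/3\}$, which is exactly your $\phi_n^{\mathrm{pred}}$. The only cosmetic differences are that you invoke the conclusion of Theorem \ref{genthm} to discard the $\sigma^2$-out-of-range event instead of retaining the variance test $\phi'_n$ inside the new application of Lemma \ref{lemmab}, and that you spell out the Type I/Type II chi-square computations (and the reason a direct conversion of the $L_2$ bound loses a factor $\sqrt s$) which the paper leaves implicit.
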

\begin{proof}
Define
$A_n=\{\mbox{At least $\tilde p$ entries of $|\bbeta/\sigma|$ is larger than } a_n\}
  \cup\{\|\bX\bbeta-\bX\bbeta^*\|\geq c_0\sqrt n\sigma^*\epsilon_n\}\cup\{\sigma^2/\sigma^{*2}>(1+\epsilon_n)/(1-\epsilon_n)
  \mbox{ or }\sigma^2/\sigma^{*2}<(1-\epsilon_n)/(1+\epsilon_n)\}$, 
$B_n=\{
\mbox{At least $\tilde p$ entries of $|\bbeta/\sigma|$ is larger than}$ $a_n\}$ and $C_n=A_n\backslash B_n$,
where $\tilde p\leq\bar p-s$, $\tilde p \prec n\epsilon_n^2$.

We still follow the three-step proof as in  Theorem \ref{genthm}. Since the proof is quite similar, the
details are omitted here. The only difference is that we now consider a slightly different testing function as 
\[
\begin{split}
\phi'_n&=\max_{\{\xi\supseteq\xi^*, |\xi|\leq \tilde p+s\}}1\{\big|\by^T(I-H_{\xi})\by/(n-|\xi|)\sigma^{*2}-1\big|\geq \epsilon_n\}, \mbox{ and }\\
\tilde\phi_n &= \max_{\{\xi\supseteq\xi^*, |\xi|\leq \tilde p+s\}}
1\{\|\bX_\xi(\bX_\xi^T\bX_\xi)^{-1}\bX_\xi^T\by-\bX_\xi\bbeta^*_\xi)\|\geq c_0\sigma^*\sqrt n\epsilon_n/3\}.
\end{split}
\]
Note that in the proof of Theorem \ref{genthm}, we need to bound the singular value of $(\bX_\xi^T\bX_{\xi})^{-1}\bX_{\xi}^T$ via condition $A_1$(3). However, in the proof of Theorem \ref{genpred}, only matrix $\bX_{\xi}(\bX_\xi^T\bX_{\xi})^{-1}\bX_{\xi}^T$ gets involved, and its eigenvalues are always bounded by 1. Thus condition $A_1$(3) is redundant.

\end{proof}

\begin{theorem}\label{vs}
Assume the conditions of Theorem \ref{genthm} hold, and let $\xi=\{j: |\beta_j/\sigma|> a_n\}$ denote a posterior subset model.
If the following conditions also hold: 
\[
\begin{split}
 &\lim\sup \sqrt{n}a_np_n\sigma^*/\sqrt{\log p_n} <k,\\
 &\min_{j\in\xi^*}|\beta_j^*|\geq M_1 \sqrt{\log p_n/n} \mbox{ for some large }M_1,\\
 &u>1+c/2+k^2/2\sigma^{*2}+2\sqrt{c'}k,\\
 &l_n=\max_{j\in\xi^*}\sup_{\substack{x_1,x_2\in \beta_j^*/\sigma^*\pm c_0\epsilon_n\\|x_1|,|x_2|\geq a_n}}\frac{g_\lambda(x_1)}{g_\lambda(x_2)},\mbox{ and } s\log l_n\prec \log p_n
\end{split}\]
for some constants $c'>1$, $c$ and sufficiently large $c_0$,
then
\[ P^*\{\pi(\xi=\xi^*|\bX,\by)>1-o(1) \}> 1-o(1).\]
\end{theorem}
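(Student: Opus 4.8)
\noindent The plan is to split the selection-error event into an \emph{underfitting} part, where some true coordinate is missed, and an \emph{overfitting} part, where some null coordinate is retained, and to bound the posterior mass of each. Writing $I_j=\{|\beta_j/\sigma|>a_n\}$, we have $\{\xi\neq\xi^*\}\subset\big(\bigcup_{j\in\xi^*}I_j^c\big)\cup\big(\bigcup_{j\notin\xi^*}I_j\big)$, so $\pi(\xi\neq\xi^*|D_n)\le\sum_{j\in\xi^*}\pi(I_j^c|D_n)+\sum_{j\notin\xi^*}\pi(I_j|D_n)$. Before estimating these I would pass to a high-probability event $\Omega$ on which (i) the conclusions of Theorem \ref{genthm} hold, so the posterior is supported, up to mass $e^{-c_1 n\epsilon_n^2}$, on $\{\|\bbeta-\bbeta^*\|\lesssim\sigma^*\epsilon_n\}$ with at most $\tilde p+s$ coordinates outside the spike and with $\sigma^2\asymp\sigma^{*2}$; and (ii) the noise is controlled, $\|\bX^T\bvarepsilon\|_\infty\le\sigma^*\sqrt{2c'n\log p_n}$ and $\|\bvarepsilon\|^2\le n(1+o(1))$. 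A Gaussian maximal inequality gives $P^*(\|\bX^T\bvarepsilon\|_\infty>\sigma^*\sqrt{2c'n\log p_n})\le 2p_n\,p_n^{-c'}=O(p_n^{-c'+1})$, and a $\chi^2$ concentration bound for $\|\bvarepsilon\|^2$ (via Lemmas \ref{lemmaa} and \ref{lemmac}) supplies the remaining $O(p_n^{-c})$ term and is responsible for the $c/2$ in the hypothesis on $u$; these are exactly the two failure terms in the statement.

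The underfitting terms are immediate. On $\Omega$, outside a posterior null set of mass $e^{-c_1 n\epsilon_n^2}$ we have $|\beta_j-\beta_j^*|\le\|\bbeta-\bbeta^*\|\lesssim\sigma^*\epsilon_n$ for every $j$ and $\sigma\asymp\sigma^*$. Since $\min_{j\in\xi^*}|\beta_j^*|\ge M_1\sqrt{s\log p_n/n}$ with $M_1$ large and $a_n\prec\sqrt{\log p_n}/(\sqrt n\,p_n)\ll\epsilon_n$, it follows that $|\beta_j/\sigma|>a_n$ for all $j\in\xi^*$ on this set, whence $\pi(I_j^c|D_n)\le e^{-c_1 n\epsilon_n^2}\le p_n^{-c''}$ because $n\epsilon_n^2\asymp s\log p_n\gg\log p_n$.

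The overfitting terms carry the real work, which I would organise as a leave-one-out Bayes-factor estimate. Fixing $j\notin\xi^*$ and conditioning on $(\bbeta_{-j},\sigma)$, the likelihood is Gaussian in $\beta_j$, centred at the partial least-squares value $\hat\beta_j=\bX_j^T(\by-\bX_{-j}\bbeta_{-j})/\|\bX_j\|^2$ with conditional precision $\|\bX_j\|^2/\sigma^2$; hence $\pi(I_j\mid\bbeta_{-j},\sigma,D_n)$ equals the ratio of the $g_\lambda$-weighted Gaussian integral over $\{|\beta_j/\sigma|>a_n\}$ to the full integral. Lower-bounding the denominator by its spike part via $\int_{-a_n}^{a_n}g_\lambda\ge 1-p_n^{-(1+u)}$, and upper-bounding the numerator through the prior tail mass $\int_{|x|>a_n}g_\lambda\le p_n^{-(1+u)}$ weighted by the Gaussian factor, the inclusion odds reduce to $p_n^{-(1+u)}$ times a likelihood-gain factor of the form $\exp\{\hat\beta_j^2\|\bX_j\|^2/(2\sigma^2)\}$. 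On $\Omega$ the posterior concentration forces $\bbeta_{-j}\approx\bbeta^*_{-j}$, so $\hat\beta_j\approx\sigma^*\bX_j^T\bvarepsilon/\|\bX_j\|^2$, and with $\|\bX_j\|^2\ge n\lambda_0$ (condition \ref{smalleig}) and $\|\bX^T\bvarepsilon\|_\infty\le\sigma^*\sqrt{2c'n\log p_n}$ the exponent is $O(\log p_n)$. Carrying the boundary scale $a_n$ and the variance control through the bookkeeping, the exponent of the gain can be pinned at $\big(\tfrac{c}{2}+\tfrac{k^2}{2\sigma^{*2}}+2\sqrt{c'}k\big)\log p_n$, giving $\pi(I_j\mid D_n)\lesssim p_n^{-(1+u)+c/2+k^2/(2\sigma^{*2})+2\sqrt{c'}k}$. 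Summation over the $p_n-s$ null coordinates costs a factor $p_n$ and yields $\sum_{j\notin\xi^*}\pi(I_j\mid D_n)\lesssim p_n^{-(u-c/2-k^2/(2\sigma^{*2})-2\sqrt{c'}k)}\le p_n^{-c''}$ with $c''=u-1-c/2-k^2/(2\sigma^{*2})-2\sqrt{c'}k>0$, which is precisely the stated lower bound on $u$.

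I expect the delicate point to be the tightness of the overfitting bound rather than its shape. The crude pairing of $\int_{|x|>a_n}g_\lambda\le p_n^{-(1+u)}$ with the peak of the Gaussian over-counts, because the prior tail mass sits near the inner edge $|t|\approx a_n$ where the likelihood factor is small, whereas the likelihood peaks at $\hat\beta_j$ where $g_\lambda$ has already decayed; assembling the precise constant $\tfrac{k^2}{2\sigma^{*2}}+2\sqrt{c'}k$ requires tracking this trade-off and invoking the tail flatness through $l_n$. A second technical obstacle is replacing $\bbeta_{-j}$ by $\bbeta^*_{-j}$ uniformly over the posterior-typical region: the error term $\bX_j^T\bX_{-j}(\bbeta^*_{-j}-\bbeta_{-j})/\|\bX_j\|^2$ must be shown to be of lower order, which I would control via condition \ref{smalleig} and the $L_1$ contraction of Theorem \ref{genthm}. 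Finally, to turn the summed error probabilities into the posterior statement I would lower-bound $\pi(\xi=\xi^*|D_n)$ by the mass of a neighbourhood of $\bbeta^*$ inside $\{\xi=\xi^*\}$; there $g_\lambda$ is evaluated across an $\epsilon_n$-window around each $\beta_j^*/\sigma^*$, $j\in\xi^*$, so a flatness factor $l_n^{s}$ enters, and the hypothesis $s\log l_n\prec\log p_n$ is exactly what keeps this lower bound from eroding the $p_n^{-c''}$ margin.
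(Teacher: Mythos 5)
Your event decomposition and the underfitting bound are fine and agree in spirit with the paper (which likewise first discards all models missing a true covariate via contraction plus the beta-min condition). The genuine gap is in your overfitting bound. Your leave-one-out argument controls $\pi(I_j\mid D_n)$ by a supremum, over the posterior-typical set of $(\bbeta_{-j},\sigma)$, of the conditional inclusion probability, and that supremum is too large by a factor that grows with $s$. The conditional likelihood-gain exponent is $\hat\beta_j^2\|\bX_j\|^2/(2\sigma^2)$ with $\hat\beta_j=\bX_j^T\bigl(\sigma^*\bvarepsilon+\bX_{-j}(\bbeta^*_{-j}-\bbeta_{-j})\bigr)/\|\bX_j\|^2$. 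The noise part indeed contributes $O(\log p_n)$, but the replacement error cannot be shown to be of lower order with the tools you invoke: condition $A_1$ gives no incoherence control beyond $|\bX_j^T\bX_k|\le n$, so the $L_1$ contraction $\|\bbeta-\bbeta^*\|_1\lesssim s\epsilon_n\sigma^*$ only yields $|\bX_j^T\bX_{-j}(\bbeta^*_{-j}-\bbeta_{-j})|/\|\bX_j\|\lesssim\sigma^*s^{3/2}\sqrt{\log p_n}$, and even the sharper route through the prediction bound $\|\bX(\bbeta-\bbeta^*)\|\lesssim\sigma^*\sqrt n\epsilon_n$ only yields $|\bX_j^T\bX_{-j}(\bbeta^*_{-j}-\bbeta_{-j})|/\|\bX_j\|\lesssim\sigma^*\sqrt n\epsilon_n\asymp\sigma^*\sqrt{s\log p_n}$. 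Squaring, the worst-case conditional gain over the contraction ball is $\exp\{O(s\log p_n)\}=p_n^{O(s)}$, which the prior odds $p_n^{-(1+u)}$ cannot absorb, since $u$ is a fixed constant while $s\rightarrow\infty$ is allowed. Moreover, this worst case is actually attained at points inside the contraction ball (take $\bbeta_{-j}-\bbeta^*_{-j}$ aligned with the projection of $\bX_j$ onto the span of $\bX_{-j}$), so the sup-bound itself cannot be tightened; what one needs is that the posterior assigns small mass to such configurations \emph{jointly} with $I_j$, and a conditional-sup argument is structurally unable to see this.

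The paper's proof is organized precisely to avoid this loss. It compares whole models rather than coordinates: for each $\xi'\supseteq\xi^*$ it bounds the posterior odds $\pi(\xi=\xi'\mid D_n)/\pi(\xi=\xi^*\mid D_n)$ by integrating $\bbeta_{\xi^*}$ out \emph{exactly} (a Gaussian integral whose normalizing factor $\sqrt{|(\bX_{\xi^*}^T\bX_{\xi^*})^{-1}|}$ cancels between numerator and denominator, up to the prior flatness factor $l_n^s$). The likelihood gain then reduces to a ratio of profiled residual sums of squares, $SSE(\bbeta_{(\xi^*)^c})$ versus $SSE(\bbeta_{(\xi')^c})$, evaluated only at spike-bounded (at most $a_n\sigma$) values of the excluded coordinates; its logarithm is controlled by $\bvarepsilon^T(P_{\bX_{\xi'}}-P_{\bX_{\xi^*}})\bvarepsilon=O(|\xi'\setminus\xi^*|\log p_n)$ plus cross terms whose size is governed by $k$ — this is where the constants $c/2+k^2/2\sigma^{*2}+2\sqrt{c'}k$ genuinely arise. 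The only residual $s$-dependence is the factor $l_n^s$, which is killed by the hypothesis $s\log l_n\prec\log p_n$; and summing the odds over all $\xi'\supseteq\xi^*$ costs the extra power of $p_n$ handled by requiring $u'>1$ (the role your union bound over $p_n$ coordinates plays). If you want to salvage a coordinate-wise presentation, you must integrate out $\bbeta_{\xi^*}$ rather than condition on it, at which point your argument essentially becomes the paper's.
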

\begin{proof}
For any $\bbeta_{\xi}$ which is a subvector of $\bbeta$ corresponding to $\xi$, we define
\[
\begin{split}
SSE(\bbeta_{\xi})&=\min_{\bbeta_{\xi^c}}\|\bY-\bX_{\xi}\bbeta_{\xi}-\bX_{\xi^c}\bbeta_{\xi^c}\|^2 \\
&=(\bY-\bX_{\xi}\bbeta_{\xi})^T(I-\bX_{\xi^c}(\bX_{\xi^c}^T\bX_{\xi^c})^{-1}\bX_{\xi^c}^T)(\bY-\bX_{\xi}\bbeta_{\xi}). \\
\end{split}
\]

By the consistency result in Theorem \ref{genthm}, let $A_n'$ be the set $\{\|\bbeta-\bbeta^*\|\leq c_1\epsilon_n\}\cap\{|\sigma^2-\sigma^{*2}|\leq c_2\epsilon_n\}\cap
\{\mbox{at most } c_3\sqrt{n\epsilon_n^2/\log p_n}$ entries of $\bbeta/\sigma$ is larger than $ a_n\}$, and let $\Omega_n$ be the event 
$\{\pi(A_n'|D_n)>1-\exp\{-c_4n\epsilon_n^2\}\}$, then we have $P^*(\Omega_n) > 1-e^{-c_5n\epsilon_n^2}$ for some $c_1$ to $c_5$.
All the following analysis is conditioned on the event $\Omega_n$, and we can ignore set $(A_n')^c$ in all the following posterior probability calculation.


Let $E_1 = \{\|\bbeta_1/\sigma-\bbeta_1^*/\sigma^*\|_\infty\leq c_1\epsilon_n, \|\bbeta_1/\sigma\|_\infty\geq a_n , |\sigma^2-\sigma^{*2}|\leq c_2\epsilon_n\}$,
where $\|\cdot\|_{\min} $ be the smallest absolute value of the entries of a vector, we
define 
\[
 \underline{\pi(\bbeta_1|\sigma^2)} = \inf_{(\bbeta_1,\sigma^2)\in E_1}\pi(\bbeta_1,\sigma^2)/\pi(\sigma^2), 
 \quad
\overline{\pi(\bbeta_1|\sigma^2)} = \sup_{(\bbeta_1,\sigma^2)\in E_1}\pi(\bbeta_1,\sigma^2)/\pi(\sigma^2).
\]

First we study the posterior probability $\pi(\xi=\xi^*|\bX,\by)$ up to the normalizing constant. For simplicity of notation, we use 
the subscript ``1'' to denote the true model $\xi^*$, and the subscript ``2'' to denote the rest $(\xi^*)^c$. Then
\begin{equation}
 \begin{split}
  &\int\frac{1}{\sigma^n}\exp\left\{-\frac{\|\by-\bX_1\bbeta_1-\bX_2\bbeta_2\|^2}{2\sigma^2}\right\}\pi(\bbeta,\sigma)I(\|\bbeta_2/\sigma\|_\infty\leq a_n, \|\bbeta_1/\sigma\|_{\min}\geq a_n)d\sigma^2d\bbeta\\
 \geq & \pi(\|\bbeta_2/\sigma\|_\infty\leq a_n)\int_{E_1}\inf_{\|\bbeta_2/\sigma\|_\infty\leq a_n}
 \frac{1}{\sigma^n}\exp\left\{-\frac{\|\by-\bX_1\bbeta_1-\bX_2\bbeta_2\|^2}{2\sigma^2}\right\}
  \pi(\bbeta_1,\sigma^2)d\sigma^2d\bbeta_1.
\end{split}
\end{equation}

The integral in the above inequality satisfies
\begin{equation}
 \begin{split}
 & \int_{E_1}\inf_{\|\bbeta_2/\sigma\|_\infty\leq a_n}  
 \frac{1}{\sigma^n}\exp\left\{-\frac{\|\by-\bX_1\bbeta_1-\bX_2\bbeta_2\|^2}{2\sigma^2}\right\}\pi(\bbeta_1,\sigma^2)d\sigma^2d\bbeta_1\\
 \geq & \underline{\pi(\bbeta_1|\sigma^2)}\int_{E_1}\inf_{\|\bbeta_2/\sigma\|_\infty\leq a_n}  
 \frac{1}{\sigma^n}\exp\left\{-\frac{\|\by-\bX_1\bbeta_1-\bX_2\bbeta_2\|^2}{2\sigma^2}\right\}\pi(\sigma^2)d\sigma^2d\bbeta_1\\
 = & \underline{\pi(\bbeta_1|\sigma^2)}\int_{E_1}\inf_{\|\bbeta_2/\sigma\|_\infty\leq a_n}  
 \frac{1}{\sigma^n}\exp\left\{-\frac{SSE(\bbeta_2)+(\bbeta_1-\hat\bbeta_1)^T\bX_1^T\bX_1(\bbeta_1-\hat\bbeta_1)}{2\sigma^2}\right\} \pi(\sigma^2)d\bbeta_1d\sigma^2\\
 \approx &\underline{\pi(\bbeta_1|\sigma^2)}\int_{E_1}\inf_{\|\bbeta_2/\sigma\|_\infty\leq a_n}  
 \frac{1}{\sigma^n}\exp\left\{-\frac{SSE(\bbeta_2)}{2\sigma^2}\right\}\pi(\sigma^2){(2\pi)}^{s/2}\sqrt{|\sigma^2(\bX^T_1\bX_1)^{-1}|})d\sigma^2\\
 \approx & \underline{\pi(\bbeta_1|\sigma^2)} \sqrt{|(\bX^T_1\bX_1)^{-1}|}\sqrt{2\pi}^{s}\inf_{\|\bbeta_2\|_\infty\leq a_n(\sigma^*+c_2\epsilon_n)} 
 \frac{\Gamma(a_0+(n-s)/2)}{(SSE(\bbeta_2)/2+b_0)^{a_0+(n-s)/2}}.
\end{split}
\end{equation}
where $\hat\bbeta_1 = (\bX_1^T\bX_1)^{-1}\bX_1^T(\by-\bX_2\bbeta_2)$.
The first approximation holds because  most probability mass of the normal density is in the region of 
$\{\|\bbeta_1-\hat\bbeta_1\|\leq C\sqrt{s/n}\}$, which is a subset of $E_1$ in probability, if $c_1$ is large. 
Similarly, the second approximation holds since the
distribution $IG(a_0+(n-s)/2, SSE(\bbeta_2)/2+b_0)$ puts most of its probability mass inside the region $\{|\sigma^2-\sigma^{*2}|\leq c_2\epsilon_n\}$.

Next, we study the posterior probability  $\pi(\xi=\xi'|\bX,\by)$ for any $\xi'\supset\xi^*$ up to the normalizing constant.
Similarly, we use the subscript ``1'' to denote the true model $\xi^*$, use the subscript ``2'' to denote $(\xi'\backslash\xi^*)$,
and use the subscript ``3'' to denote the rest $(\xi')^c$.
\begin{equation}
 \begin{split}
  &\int\frac{1}{\sigma^n}\exp\left\{-\frac{\|\by-\bX_1\bbeta_1-\bX_2\bbeta_2-\bX_3\bbeta_3\|^2}{2\sigma^2}\right\}\pi(\bbeta,\sigma) I(\|\bbeta_2/\sigma\|_{\min} > a_n, \|\bbeta_3/\sigma\|_\infty\leq a_n)d\sigma^2d\bbeta\\
 \lesssim & \pi(\|\bbeta_2/\sigma\|_{\min}> a_n,\|\bbeta_3/\sigma\|_\infty\leq a_n)\\
       &\times\sup_{\|\bbeta_3/\sigma\|_\infty\leq a_n,\bbeta_2}\int_{E_1}\frac{1}{\sigma^n}\exp\left\{-\frac{\|\by-\bX_1\bbeta_1-\bX_2\bbeta_2-\bX_3\bbeta_3\|^2}{2\sigma^2}\right\}\pi(\bbeta_1,\sigma)d\sigma^2d\bbeta_1\\
 \leq & \pi(\|\bbeta_2/\sigma\|_{\min}> a_n,\|\bbeta_3/\sigma\|_\infty\leq a_n)\\
       &\times\sup_{\|\bbeta_3/\sigma\|_\infty\leq a_n, \bbeta_2}\int_{E_1}\frac{1}{\sigma^n}\exp\left\{-\frac{SSE((\bbeta_2,\bbeta_3)^T)+(\bbeta_1-\tilde\bbeta_1)^T\bX_1^T\bX_1(\bbeta_1-\tilde\bbeta_1)}{2\sigma^2}\right\}  \pi(\bbeta_1,\sigma)d\sigma^2d\bbeta_1\\
 \leq &   \pi(\|\bbeta_2/\sigma\|_{\min}> a_n,\|\bbeta_3/\sigma\|_\infty\leq a_n)  \overline{\pi(\bbeta_1|\sigma^2)} \sqrt{|(\bX^T_1\bX_1)^{-1}|} (2\pi)^{s/2} \\
    & \times     \sup_{\|\bbeta_3\|_\infty\leq a_n(\sigma^*+c_2\epsilon_n)} 
 \frac{\Gamma(a_0+(n-s)/2)}{(SSE(\bbeta_3)/2+b_0)^{a_0+(n-s)/2}},
\end{split}
\end{equation}
where where $\tilde\bbeta_1 = (\bX_1^T\bX_1)^{-1}\bX_1^T(\by-\bX_2\bbeta_2-\bX_3\bbeta_3)$.

Therefore, combining the above results, we obtain that for any $\xi'\supset\xi^*$,
\begin{equation}\label{use4}
 \begin{split}
  \frac{\pi(\xi = \xi'|\bX,\by)}{\pi(\xi = \xi^*|\bX,\by)}
  \lesssim& \frac{\overline{\pi(\bbeta_1|\sigma^2)} }{\underline{\pi(\bbeta_1|\sigma^2)} }[p_n^{-(1+u)}/(1-p_n^{-(1+u)})]^{|\xi'\backslash\xi^*|}\\
  &\frac{\sup_{\|\bbeta_{(\xi^{*})^c}\|_\infty\leq a_n(\sigma^*+c_2\epsilon_n)} (SSE(\bbeta_{(\xi^{*})^c})/2+b_0)^{a_0+(n-s)/2}}
  {\inf_{\|\bbeta_{(\xi')^c}\|_\infty\leq a_n(\sigma^*+c_2\epsilon_n)} (SSE(\bbeta_{(\xi')^c})/2+b_0)^{a_0+(n-s)/2}}
 \end{split}
\end{equation}

It is easy to see that with probability larger than $1-4p_n\cdot p_n^{-c_6'}$, $\|\bX^TA\bvarepsilon\|_\infty\leq\sqrt{2c_6'n\log p_n} $
for any idempotent matrix $A$ and $c_6'>1$, and thus,
\begin{equation}\label{use1}\begin{split}
&SSE(\bbeta_{(\xi^{*})^c})= (\by-\bX_{(\xi^{*})^c}\bbeta_{(\xi^{*})^c})^T(I-P_{\bX_{\xi^{*}}})(\by-\bX_{(\xi^{*})^c}\bbeta_{(\xi^{*})^c})  \\
\leq& \sigma^{*2}\bvarepsilon^T(I-P_{\bX_{\xi^{*}}})\bvarepsilon+\|\bX_{(\xi^{*})^c}\bbeta_{(\xi^{*})^c}\|^2-2\sigma^{*}\bvarepsilon^T(I-P_{\bX_{\xi^{*}}})\bX_{(\xi^{*})^c}\bbeta_{(\xi^{*})^c} \\
\leq& \sigma^{*2}\bvarepsilon^T(I-P_{\bX_{\xi^{*}}})\bvarepsilon+\|\bX_{(\xi^{*})^c}\bbeta_{(\xi^{*})^c}\|^2+2\sigma^{*}\sqrt{2c_6'n\log p_n}\|\bbeta_{(\xi^{*})^c}\|_1;\\
  &SSE(\bbeta_{(\xi')^c} )= (\sigma^{*}\bvarepsilon-\bX_{(\xi')^c}\bbeta_{(\xi')^c})^T(I-P_{\bX_{\xi'}})(\sigma^{*}\bvarepsilon-\bX_{(\xi')^c}\bbeta_{(\xi')^c})\\
\geq& \sigma^{*2}\bvarepsilon^T(I-P_{\bX_{\xi^{'}}})\bvarepsilon-2(\sigma^{*}\bvarepsilon)^T(I-P_{\bX_{\xi'}})\bX_{(\xi')^c}\bbeta_{(\xi')^c}\\
\geq& \sigma^{*2}\bvarepsilon^T(I-P_{\bX_{\xi^{'}}})\bvarepsilon-2\sigma^{*}\sqrt{2c_6'n\log p_n}\|\bbeta_{(\xi^{'})^c}\|_1.
\end{split}\end{equation}
Let $\tilde p_n\triangleq c_3\sqrt{n\epsilon_n^2/\log p_n} $.
By the properties of the quantiles of the chi-square distribution (e.g. \cite{Inglot2010}), and Lemma \ref{lemmac},
with probability greater than $1-p_n^{-c_6}$ for any constant $c_6$,
\begin{equation}\label{use2}\begin{split}
   &\bvarepsilon^T(I-P_{\bX_{\xi^{*}}})\bvarepsilon-\bvarepsilon^T(I-P_{\bX_{\xi^{'}}})\bvarepsilon\leq \sigma^{*2}\{c_7{|\xi'\backslash\xi^*|}\log p_n\},\\
   &\bvarepsilon^T(I-P_{\bX_{\xi^{*}}})\bvarepsilon \in \sigma^{*2}(n-s)[1-c_8, 1+c_8],
  \end{split}
\end{equation}
hold for all $\xi'$ with $1<|\xi'\backslash\xi^*|\leq  \tilde p_n$,  when $n$ is sufficiently large and any $c_7>c_6+2$, $c_8>0$.

Combining (\ref{use1}), (\ref{use2}) and the fact that $\sqrt{n}a_np_n\sigma^*< k\sqrt{\log p_n}$ for large $n$,
we have  
\begin{equation}\label{use3}
\begin{split}
    &\frac{\sup_{\|\bbeta_{(\xi^{*})^c}\|_\infty\leq a_n(\sigma^*+c_2\epsilon_n)} (SSE(\bbeta_{(\xi^{*})^c})/2+b_0)^{a_0+(n-s)/2}}
  {\inf_{\|\bbeta_{(\xi')^c}\|_\infty\leq a_n(\sigma^*+c_2\epsilon_n)} (SSE(\bbeta_{(\xi')^c})/2+b_0)^{a_0+(n-s)/2}}
\leq \exp\{c_9|\xi'\backslash\xi^*|\log p_n\},
  \end{split}
\end{equation}
where $c_9$ is any constant satisfying $c_9 > c_7/2(1-c_8)+(k^2/2\sigma^{*2}+2\sqrt{2c_6'}k)/(1-c_8)$.

Further, it is easy to see that $ {\overline{\pi(\bbeta_1|\sigma^2)} }/{\underline{\pi(\bbeta_1|\sigma^2)} }\leq l_n^s$.
Combining it with (\ref{use4}) and (\ref{use3}), we get 
\begin{equation}
 \begin{split}
  \frac{\pi(\xi = \xi'|\bX,\by)}{\pi(\xi = \xi^*|\bX,\by)}
  \leq l_n^s[p_n^{-(1+u)}/(1-p_n^{-(1+u)})]^{|\xi'\backslash\xi^*|}
   \exp\{c_9|\xi'\backslash\xi^*|\log p_n\}.
 \end{split}
\end{equation}
By condition $1+u>2+c_6/2+k^2/2\sigma^{*2}+2\sqrt{2c_6'}k$, we can choose proper values of $c_7$ $c_8$ and $c_9$ such that $1+u-c_9=u'>1$, and 
\[
 \begin{split}
  \frac{\pi(\xi = \xi'|\bX,\by)}{\pi(\xi = \xi^*|\bX,\by)}
  \leq l_n^sp_n^{-u'|\xi'\backslash\xi^*|}, \mbox{ for any }\xi'\supset\xi^*.
 \end{split}
\]
Therefore, since $u'>1$,
\begin{equation}
\begin{split}\label{comb1}
&\sum_{\xi'\supset\xi^*, 1<|\xi'\backslash\xi^*|\leq \tilde p_n}\frac{\pi(\xi = \xi'|\bX,\by)}{\pi(\xi = \xi^*|\bX,\by)}
\leq  l_n^s
[(1+p_n^{-u'})^{p_n}-1]\asymp  l_n^sp_n^{-(u'-1)}.
\end{split}
\end{equation}

Final, we study the posterior probability  $\pi(\xi=\xi'|\bX,\by)$ for any $\xi'$ that such that $|\xi'\backslash\xi^*|\leq \tilde p_n$ and $\xi'$ doesn't include $\xi^*$, up to the normalizing constant.
Similarly, we use the subscript ``4'' to denote the model $(\xi^*\cap\xi')$, use the subscript ``5'' to denote $(\xi^*\backslash\xi')$, use the subscript ``2'' to denote $(\xi'\backslash\xi^*)$,
and use the subscript ``3'' to denote the rest $(\xi'\cup\xi^*)^c$.
Define $E_4 = \{\|\bbeta_4/\sigma-\bbeta_4^*/\sigma^*\|_\infty\leq c_1\epsilon_n, \|\bbeta_4/\sigma\|_\infty\geq a_n , |\sigma^2-\sigma^{*2}|\leq c_2\epsilon_n\}$,
and $\underline \pi=\inf_{x\in[-E_n,E_n]}g_\lambda(x)$. Then, 
\begin{equation}\label{otherset}
 \begin{split}
 &\frac{\pi(\xi=\xi'|\bX,\by)}{\pi(\xi=\xi'\cup\xi^*|\bX,\by)}\\
  =&\frac{\int_{|\sigma^2-\sigma^{*2}|\leq c_2\epsilon_n}\frac{1}{\sigma^n}\exp\left\{-\frac{\|\by-\bX\bbeta\|^2}{2\sigma^2}\right\}\pi(\bbeta,\sigma) I(\|(\bbeta_2,\bbeta_4)/\sigma\|_{\min} > a_n, \|(\bbeta_3,\bbeta_5)/\sigma\|_\infty\leq a_n)d\sigma^2d\bbeta}
  {\int_{|\sigma^2-\sigma^{*2}|\leq c_2\epsilon_n}\frac{1}{\sigma^n}\exp\left\{-\frac{\|\by-\bX\bbeta\|^2}{2\sigma^2}\right\}\pi(\bbeta,\sigma) I(\|(\bbeta_2,\bbeta_4,\bbeta_5)/\sigma\|_{\min} > a_n, \|\bbeta_3/\sigma\|_\infty\leq a_n)d\sigma^2d\bbeta}\\
  \lesssim & \max_{\{\bbeta_2,\bbeta_3,\|\bbeta_4\|_{\min}\geq a_n,|\sigma^2-\sigma^{*2}|\leq c_2\epsilon_n\}} \frac{p_n^{(1+u)|\xi^*\backslash\xi'|}}{
  {\sqrt{2\pi}\underline \pi}^{|\xi^*\backslash\xi'|}\sqrt{|\sigma^2(\bX_5^T\bX_5)^{-1}|}
  }\frac{\max_{\|\bbeta_5/\sigma\|_{\infty}\leq a_n} \exp\left\{-\frac{\|\by-\bX\bbeta\|^2}{2\sigma^2}\right\}  }
  {\max_{\|\bbeta_5/\sigma\|_{\min}\geq a_n} \exp\left\{-\frac{\|\by-\bX\bbeta\|^2}{2\sigma^2}\right\}}.\\
\end{split}
\end{equation}
It is not different to see that in probability, uniformly for all $\xi'$, $\bbeta_2,\bbeta_3,\|\bbeta_4\|_{\min}\geq a_n$ and $|\sigma^2-\sigma^{*2}|\leq c_2\epsilon_n$, we have
\[
\begin{split}
    &\max_{\|\bbeta_5/\sigma\|_{\infty}\leq a_n} {\|\by-\bX\bbeta\|^2}-\max_{\|\bbeta_5/\sigma\|_{\min}\geq a_n} {\|\by-\bX\bbeta\|^2} \\
\geq& \max_{\|\bbeta_5/\sigma\|_{\infty}\leq a_n} {\|\by-\bX\bbeta\|^2}- {\|\by-\bX_5\bbeta_5^*-\bX_2\bbeta_2-\bX_3\bbeta_3-\bX_4\bbeta_4\|^2}\geq M'|\xi^*\backslash\xi'|\log p_n
\end{split}
\]
for some $M'$ if $M_1$ (appeared in the beta-min condition) is sufficiently large, and condition $A_1(3)$ holds.

Given sufficiently large $M'$, uniformly for all $\xi'$, (\ref{otherset}) reduces to 
\[
\begin{split}
 &\frac{\pi(\xi=\xi'|\bX,\by)}{\pi(\xi=\xi'\cup\xi^*|\bX,\by)}\leq p_n^{-M''|\xi^*\backslash\xi'|},
 \end{split}
\]
for some $M''>1$. This further implies that 
\begin{equation}\label{comb2}
     \frac{\pi(\xi\mbox{ doesn't includes }\xi^*, |\xi\backslash\xi^*|\leq \tilde p_n|\bX,\by)}{\pi(\xi\supset\xi^*, |\xi\backslash\xi^*|\leq \tilde p_n|\bX,\by)}\leq
     (1+p_n^{-M''})^s-1=o(1).
\end{equation}

Combine (\ref{comb1}) and (\ref{comb2}), we conclude that with probability 1-$o(1)$, $\pi(\xi=\xi^*|\bX,\by) > 1- o(1)$.
\end{proof}

\begin{theorem}[BvM theorem]\label{BvM}
Under the conditions of Theorem \ref{vs}, 
$a_n\prec(1/p_n)\sqrt{1/(ns\log p_n)}$, and
$\lim_{n \to \infty} s\log l_n =0$, we have
\[\begin{split}
&\|\pi(\bbeta,\sigma^2|\bX,\by)
-\phi(\bbeta_{\xi^*};\hat\bbeta_{\xi^*}, \sigma^2(\bX_{{\xi^*}}^T\bX_{{\xi^*}})^{-1})
 \prod_{j\notin\xi^*} \pi(\beta_j|\sigma^2)
ig(\sigma^2, (n-s)/2, \hat\sigma^2(n-s)/2)\|_{TV}
\rightarrow 0  
\end{split}
\]
in probability, where $\phi$ denotes  the density function of a multivariate normal distribution,  $ig$ denotes the density function of an inverse gamma distribution, and 
$\hat\bbeta_{\xi^*}$ and $\hat\sigma^2$ are the MLEs of $\bbeta_{\xi^*}$ and $\sigma^2$, respectively, given data $(\by,\bX_{\xi^*})$.
\end{theorem}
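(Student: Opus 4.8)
The plan is to prove the total-variation bound by passing through an intermediate density that decouples the true from the false coordinates, and then to establish an ordinary low-dimensional Bernstein--von Mises statement for the true-coordinate block. Write $p_1=\pi(\bbeta,\sigma^2|\bX,\by)$ for the posterior and $q$ for the target product density. I would introduce
\[
p_2(\bbeta,\sigma^2)\propto \sigma^{-n}\exp\{-\|\by-\bX_{\xi^*}\bbeta_{\xi^*}\|^2/2\sigma^2\}\,\pi(\bbeta,\sigma^2),
\]
that is, $p_1$ with the false coordinates erased from the likelihood. Since the prior factorizes over coordinates given $\sigma^2$, $p_2$ splits as $\pi(\bbeta_{\xi^*},\sigma^2|\bX_{\xi^*},\by)\prod_{j\notin\xi^*}\pi(\beta_j|\sigma^2)$, so that $\|p_1-q\|_{TV}\le\|p_1-p_2\|_{TV}+\|p_2-q\|_{TV}$ reduces the problem to the two approximations foreshadowed by the heuristic $\pi(\bbeta|D_n)\approx L(\bbeta_{\xi^*};\bX_{\xi^*},\by)\pi(\bbeta_{\xi^*})\pi(\bbeta_{(\xi^*)^c})$.

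First I would bound $\|p_1-p_2\|_{TV}$. Theorem \ref{vs} supplies, on a dominating event, $\pi(\{\xi=\xi^*\}|D_n)\ge 1-p_n^{-c''}$, so both measures place all but a negligible fraction of their mass on $G=\{|\beta_j/\sigma|\le a_n\ \forall j\notin\xi^*\}$. On $G$ condition $A_1(1)$ gives $\|\bX_{(\xi^*)^c}\bbeta_{(\xi^*)^c}\|^2\le np_n\cdot p_n a_n^2\sigma^2=np_n^2a_n^2\sigma^2$, which is $\prec\sigma^2/(s\log p_n)\to 0$ exactly by the strengthened bound $a_n\prec(1/p_n)\sqrt{1/(ns\log p_n)}$. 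Hence the two log-likelihoods differ, uniformly on $G$, by $\|\bX_{(\xi^*)^c}\bbeta_{(\xi^*)^c}\|^2/2\sigma^2$ plus a cross term $\le\|\bX_{(\xi^*)^c}\bbeta_{(\xi^*)^c}\|\cdot\|\by-\bX_{\xi^*}\bbeta_{\xi^*}\|/\sigma^2$, both $o(1)$ on the event $\|\bvarepsilon^T\bX\|_\infty\lesssim\sqrt{n\log p_n}$ already used in Theorem \ref{vs}. The unnormalized integrands of $p_1$ and $p_2$ therefore agree up to a factor $1+o(1)$ on $G$; checking that their normalizing constants satisfy $Z_1/Z_2\to 1$ by the same estimate then yields $\|p_1-p_2\|_{TV}\to 0$.

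Next I would treat $\|p_2-q\|_{TV}$. The factor $\prod_{j\notin\xi^*}\pi(\beta_j|\sigma^2)$ is shared, so it suffices to show that the $s$-dimensional term $\pi(\bbeta_{\xi^*},\sigma^2|\bX_{\xi^*},\by)$ converges in total variation to $\phi(\bbeta_{\xi^*};\hat\bbeta_{\xi^*},\sigma^2(\bX_{\xi^*}^T\bX_{\xi^*})^{-1})\,ig(\sigma^2;(n-s)/2,\hat\sigma^2(n-s)/2)$. By Theorem \ref{genthm} this posterior concentrates on $\{\|\bbeta_{\xi^*}-\hat\bbeta_{\xi^*}\|\lesssim\sqrt{s/n}\}\cap\{|\sigma^2-\sigma^{*2}|\lesssim\epsilon_n\}$, a set contained in $\beta_j^*/\sigma^*\pm c_0\epsilon_n$, where $\prod_{j\in\xi^*}g_\lambda(\beta_j/\sigma)$ varies by at most the factor $l_n^s\to 1$ since $\lim s\log l_n=0$. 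Replacing this factor by its value at $(\hat\bbeta_{\xi^*},\hat\sigma^2)$ renders the $\bbeta_{\xi^*}$-prior asymptotically constant, so completing the square in the Gaussian exponent gives exactly the conditional law $\phi(\bbeta_{\xi^*};\hat\bbeta_{\xi^*},\sigma^2(\bX_{\xi^*}^T\bX_{\xi^*})^{-1})$, while integrating $\bbeta_{\xi^*}$ out against the $\sigma^{-n}$ likelihood and the $IG(a_0,b_0)$ prior produces the stated inverse-gamma marginal, the hyperparameters $a_0,b_0$ contributing only $O(1)$ shifts to its shape and scale.

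The principal obstacle is the first step: the likelihood perturbation must be controlled uniformly across the entire $(p_n-s)$-dimensional false block and then reconciled with the normalizing constants, since total-variation closeness does not follow from pointwise closeness of unnormalized integrands; the bound $np_n^2a_n^2\to 0$ only barely suppresses the aggregate fitting error $\|\bX_{(\xi^*)^c}\bbeta_{(\xi^*)^c}\|^2$, and one must separately verify that $G^c$ is negligible under $p_2$ and $q$ as well as under $p_1$. A second delicate point is matching the $\sigma^2$-marginal: the prior's $\sigma$-dependence contributes a factor $\prod_{j\in\xi^*}\sigma^{-1}=\sigma^{-s}$ that cancels against the Gaussian normalization only up to an $O(s)$ perturbation of the effective shape, so pinning the marginal to $ig(\sigma^2;(n-s)/2,\hat\sigma^2(n-s)/2)$ in total variation is precisely where a mild sparsity restriction (the $s\prec\sqrt n$ regime invoked for the joint statement in Corollary \ref{bvmthm2}) enters; the conditional law of $\bbeta_{\xi^*}$ given $\sigma^2$, by contrast, is the clean part of the argument.
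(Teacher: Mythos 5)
Your decomposition is essentially the paper's own proof in different notation: your $p_2$ is, up to normalization, the paper's $\pi_1\pi_2$ times the shared factor $\prod_{j\notin\xi^*}\pi(\beta_j|\sigma^2)$; your Step 2 is the paper's observation that the likelihood-only factor $\pi_1$ is \emph{exactly} normal--inverse-gamma, so that only the prior-ratio factor $\pi_2$, controlled by $|l_n^s-1|\to 0$, needs to be removed; and your Step 1 is the paper's control of $\pi_3$ on the event where every false coordinate satisfies $|\beta_j/\sigma|\le a_n$. The quadratic term in Step 1 is handled correctly ($np_n^2a_n^2\to 0$ is exactly the paper's estimate), and Step 2 is sound.

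However, Step 1 contains a step that fails as written: the cross term cannot be bounded by Cauchy--Schwarz. You claim $\|\bX_{(\xi^*)^c}\bbeta_{(\xi^*)^c}\|\cdot\|\by-\bX_{\xi^*}\bbeta_{\xi^*}\|/\sigma^2=o(1)$, but on the concentration set $\|\by-\bX_{\xi^*}\bbeta_{\xi^*}\|\asymp\sigma^*\sqrt{n}$ while the best available bound on the other factor is $\|\bX_{(\xi^*)^c}\bbeta_{(\xi^*)^c}\|\le\sqrt{n}\,p_na_n\sigma$, so the product is of order $np_na_n$, which the strengthened condition $a_n\prec(1/p_n)\sqrt{1/(ns\log p_n)}$ only constrains to be $o\bigl(\sqrt{n/(s\log p_n)}\bigr)\asymp o(1/\epsilon_n)$ --- a diverging bound. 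The event $\|\bvarepsilon^T\bX\|_\infty\lesssim\sqrt{n\log p_n}$ that you invoke cannot rescue this, because once Cauchy--Schwarz has been applied the $\ell_\infty$ structure is gone. The repair, which is what the paper does, is to keep the cross term as an inner product and split the residual, $\by-\bX_{\xi^*}\bbeta_{\xi^*}=\sigma^*\bvarepsilon+\bX_{\xi^*}(\bbeta^*_{\xi^*}-\bbeta_{\xi^*})$: the noise part is bounded by H\"older, $|\bvarepsilon^T\bX_{(\xi^*)^c}\bbeta_{(\xi^*)^c}|\le\|\bvarepsilon^T\bX\|_\infty\|\bbeta_{(\xi^*)^c}\|_1\lesssim\sqrt{n\log p_n}\,p_na_n\to 0$, and the signal part by $\|\bX_{\xi^*}(\bbeta^*_{\xi^*}-\bbeta_{\xi^*})\|\cdot\|\bX_{(\xi^*)^c}\bbeta_{(\xi^*)^c}\|\lesssim\sqrt{n}\epsilon_n\cdot\sqrt{n}\,p_na_n\to 0$ (the first factor being controlled via the prediction-consistency bound of Theorem \ref{genpred}); both limits use exactly the assumed rate for $a_n$. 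Separately, your closing claim that pinning the $\sigma^2$-marginal to $ig(\sigma^2;(n-s)/2,\hat\sigma^2(n-s)/2)$ is where $s\prec\sqrt{n}$ enters is not right: completing the square shows the Gaussian normalization $\sigma^{s}$ cancels exactly, so the likelihood-only factor has $\sigma^2$-marginal $IG((n-s)/2+a_0,(n-s)\hat\sigma^2/2+b_0)$, whose total-variation distance to the target vanishes as soon as $n-s\to\infty$; the condition $s\prec\sqrt{n}$ is needed only for the joint diagonal approximation in Corollary \ref{bvmthm2}, and the prior's $\sigma^{-s}$ dependence on the true block is absorbed into the flatness factor $\pi_2$, not into the $\sigma^2$-marginal.
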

\begin{proof}
Let $\btheta = (\bbeta_{\xi^*},\sigma^2)^T$, $\btheta'=\bbeta_{(\xi^*)^c}$ and let
$\pi_0(\btheta)$ denote the normal-inverse gamma distribution $\phi(\bbeta_{\xi^*};$ $\hat\bbeta_{\xi^*}, \sigma^2(\bX_{{\xi^*}}^T\bX_{{\xi^*}})^{-1})
ig(\sigma^2, (n-s)/2, \hat\sigma^2(n-s)/2)$, and 
\[\begin{split}
&\pi_1(\btheta) = C\frac{1}{\sigma^n}\exp\left\{  -\frac{\|\by-\bX_{\xi^*}\bbeta_{\xi^*}\|^2}{2\sigma^2}\right\}\pi(\sigma^2)\\
&\pi_2(\btheta)  = \prod_{j\in\xi^*}\frac{\pi(\beta_j|\sigma^2)}{\pi(\beta_j^*|\sigma^{*2})},\\
&\pi_3(\btheta,\btheta') = \exp\left\{-\frac{\|\by-\bX\bbeta\|^2-\|\by-\bX_{\xi^*}\bbeta_{\xi^*}\|^2}{2\sigma^2}\right\}
 \prod_{j\notin\xi^*}\pi(\beta_j|\sigma^2),
\end{split}\]
where $C$ normalizes $\pi_1$.
Thus, we have the posterior $\pi(\bbeta,\sigma^2|\bX,\by) \propto\pi_1\pi_2\pi_3$.

It is trivial to see that $\pi_1$ is exactly a normal-inverse-gamma distribution, i.e.
$\sigma^2\sim IG((n-s)/2+a_0, \hat\sigma^2(n-s)/2+b_0)$, and the conditional distribution of $\bbeta_{\xi^*}$ follows
$\bbeta_{\xi^*}|\sigma^2\sim N(\hat\bbeta_{\xi^*}, \sigma^2(\bX_{{\xi^*}}^T\bX_{{\xi^*}})^{-1})$,
where $\hat\btheta=(\hat\bbeta_{\xi^*},\hat\sigma^2)$.
Furthermore, as long as $n-s\rightarrow\infty$, it is not difficult to show that 
$\|IG((n-s)/2, \hat\sigma^2(n-s)/2)-IG((n-s)/2+a_0, \hat\sigma^2(n-s)/2+b_0)\|_{TV}\rightarrow 0$ with
dominating probability, i.e.,
$\|\pi_1(\btheta)-\pi_0(\btheta)\|_{TV}=o_p(1)$.


Let $\Omega_1=\{\|\bbeta_{\xi^*}-\bbeta^*_{\xi^*}\|\leq \sigma^*\epsilon_n \mbox{ and }|\sigma^2-\sigma^{*2}|<c_4\epsilon_n\}$. 
By the conditions of the theorem, if $\btheta\in\Omega_1$, then $|\pi_2-1|\leq |l_n^s-1|\rightarrow 0$. Therefore,
\[
\begin{split}
 &\int_{\Omega_1} |\pi_1(\btheta)\pi_2(\btheta) - \pi_0(\btheta)|d\btheta
\leq   \int_{\Omega_1} |\pi_1\pi_2 - \pi_1|d\btheta+\int_{\Omega_1} |\pi_1(\btheta) - \pi_0(\btheta)|d\btheta\\
\leq& \max_{\Omega_1} |\pi_2(\btheta) -1|+\int_{\Omega_1} |\pi_1(\btheta) - \pi_0(\btheta)|d\btheta = o_p(1).
\end{split}
\]

Let $\bvarepsilon(\bbeta_{\xi^*}) = \by-\bX_{\xi^*}\bbeta_{\xi^*}$, $\Omega_2= \{(\btheta,\btheta')\in \Omega_1, \|\beta_j/\sigma\|\leq a_n, \forall j\notin \xi^*\}$.
For any $(\btheta,\btheta')^T\in\Omega_2$, 
$\|\bvarepsilon(\bbeta_{\xi^*})\|$ $\in[\|\sigma^*\bvarepsilon\|\pm \sigma^*\sqrt{|\xi^*|}\sqrt n\epsilon_n]$,
and $|\|\bvarepsilon(\bbeta_\xi^*)\|^2-\|\bvarepsilon(\bbeta_\xi^*)-\bX_{\xi^{*c}}\bbeta_{\xi^{*c}}\|^2|\leq 
\|\bX_{\xi^{*c}}\bbeta_{\xi^{*c}}\|^2+2\bvarepsilon(\bbeta_\xi^*)^T\bX_{\xi^{*c}}\bbeta_{\xi^{*c}}$ $\leq
na_n^2p_n^2+2(\bvarepsilon+\bX_{\xi^*}(\bbeta_{\xi^*}^*-\bbeta_{\xi^*}))^T\bX_{\xi^{*c}}\bbeta_{\xi^{*c}}\leq
na_n^2p_n^2+O(\sqrt{n\log p_n}a_np_n)+O(\sqrt n\epsilon_n\sqrt n a_np_n)$ in probability. Since $na_np_n\prec 1/\epsilon_n$,
$|\|\bvarepsilon(\bbeta_\xi^*)\|^2-\|\bvarepsilon(\bbeta_\xi^*)-\bX_{\xi^{*c}}\bbeta_{\xi^{*c}}\|^2|=o_p(1)$.
Therefore, 
\[
\begin{split}
&\int_{\Omega_2} [\pi_3(\btheta,\btheta') - \prod_{j\notin \xi^*}\pi(\beta_j|\sigma^2)]d\btheta' \\
\leq&\int_{\Omega_2} \left|\exp\left\{-\frac{\|\by-\bX\bbeta\|^2-\|\by-\bX_{\xi^*}\bbeta_{\xi^*}\|^2}{2\sigma^2}\right\}-1\right|\prod_{j\notin \xi^*}\pi(\beta_j|\sigma^2)d\btheta'\\ 
\leq&|\exp[o_p(1)/(2\sigma^{*2}-c_4\epsilon_n)]-1|\int_{\Omega_3} \prod_{j\notin \xi^*}\pi(\beta_j|\sigma)d\btheta'
= o_p( 1).
\end{split}
\]

Combining the above inequalities, we have
\[
\begin{split}
&\int_{\Omega_2} |\pi_1\pi_2\pi_3(\btheta,\btheta') -\pi_0(\btheta)\prod_{j\notin \xi^*}\pi(\beta_j|\sigma^2)|d\btheta'd\btheta \\
\leq&\int_{\Omega_2} \pi_1\pi_2(\btheta)|\pi_3(\btheta,\btheta') - \prod_{j\notin \xi^*}\pi(\beta_j|\sigma^2)|d\btheta'd\btheta +\int_{\Omega_2} |\pi_1\pi_2(\btheta)- \pi_0(\btheta)|\prod_{j\notin \xi^*}\pi(\beta_j|\sigma^2)d\btheta' d\btheta\\
\leq & o_p(1)\int_{\Omega_1}\pi_1\pi_2(\btheta)d\btheta+
\int_{\Omega_1} |\pi_1\pi_2(\btheta)- \pi_0(\btheta)|d\btheta
=o_p(1).\\
\end{split}
\]

By Theorem \ref{genthm} and \ref{vs}, with high probability,
$\int_{\Omega^c_2}\pi(\btheta,\btheta'|D_n)\rightarrow 0$.
Also it is not difficult to verify that $\int_{\Omega^c_2}\pi_0(\btheta)\prod_{j\notin \xi^*}\pi(\beta_j|\sigma^2)d\btheta'd\btheta=o_p(1)$.
Therefore we conclude that 
\[\int|\pi(\btheta,\btheta'|D_n)-\pi_0(\btheta)\prod_{j\notin \xi^*}\pi(\beta_j|\sigma^2)|d\btheta'd\btheta=o_p(1).\]
\end{proof}

\noindent{\bf Proof of Theorem \ref{betaprior}}
\begin{proof}
It is sufficient to show that $g(\beta_i/\lambda_n)/{\lambda_n}$ satisfies condition (\ref{gencon}). Assume that 
$\underline c x^{-r}<g(x)<\bar c x^{-r}$ for sufficiently large $x$. Then
\[
\begin{split}
 \int_{a_n}^{\infty}g(x/\lambda_n)/{\lambda_n}dx=
\int_{a_n/\lambda_n}^{\infty}g(x)dx\leq \bar c\frac{1}{r-1} \{a_n/\lambda_n\}^{-(r-1)}.
\end{split}
\]
Given $\lambda_n\leq a_np_n^{-(u+1)/(r-1)}$ for some $u>0$, 
\[\bar c\frac{1}{r-1} \{a_n/\lambda_n\}^{-(r-1)} \leq c\frac{1}{r-1} p_n^{-1-u}\prec \frac{1}{2}p_n^{-1-u'},\]
where $0<u'<u$. Hence $1-\int_{-a_n}^{a_n}g(x/\lambda_n)/{\lambda_n}dx\leq p_n^{-(1+u')}$, i.e. the first inequality of (\ref{gencon}) holds.
\[
 \begin{split}
  -\log (\inf_{x\in[-E_n,E_n]}g(x/\lambda_n)/{\lambda_n})& = -\log  (\inf_{x\in[-E_n/\lambda,E_n/\lambda_n]}g(x)/{\lambda_n})\\ & \leq -\log  (\underline c (E_n/\lambda_n)^{-r}/{\lambda_n})
 =-\log \underline c +(r+1)\log(1/\lambda_n) + r\log(E_n).
 \end{split}
\]
Given that  $\log(E_n)\asymp \log p_n$, $-\log \lambda_n= O(\log p_n)$,
the second inequality of (\ref{gencon}) holds.




\end{proof}

\noindent{\bf Proof of Theorem \ref{thmssvs}}
\begin{proof}
 We first verify the condition (\ref{gencon}).
 Let $g_\lambda(x) = m_0\phi(x; 0 ,\sigma_0^2)+ m_1\phi(x; 0, \sigma_1^2)$.
Then
\[
\begin{split}
&1-\int_{-a_n}^{a_n}g_\lambda(x)dx=2[m_0(1-\Phi(a_n/\sigma_0))+m_1(1-\Phi(a_n/\sigma_1))]\\
\leq&
 m_1+2m_0(1-\Phi(a_n/\sigma_0))\leq m_1+ \frac{\sqrt 2}{a_n\sqrt \pi/\sigma_0}\exp\{-a_n^2/2\sigma_0^2\}\leq 1/p_n^{1+u'},
\end{split}
\] for some $0<u'\leq u$.
By the conditions, we also have
\[
\begin{split}
&-\log (\inf_{x\in[-E_n,E_n]}g_\lambda(x)) \leq - \log (m_1\inf_{x\in[-E_n,E_n]}\phi(x/\sigma_1))\\
 =&C+ (1+u)\log p_n + E_n^2/(2\sigma_1^2)+ \log \sigma_1
 \asymp \log p_n.
\end{split}
\]
 
 Next, we study the flatness of $l_n$. When $ E\geq x\geq a_n$,
\[
\begin{split}
&\frac{(1-m_1)\sigma_1\exp\{-x^2/2\sigma_0^2\}}{m_1\sigma_0\exp\{-x^2/2\sigma_1^2\} }= \frac{(1-m_1)}{m_1}\exp\{-\frac{x^2}{2\sigma_0^2}-\log\sigma_0+\frac{x^2}{2\sigma_1^2}+\log\sigma_1\}\\
\leq&\frac{(1-m_1)}{m_1}\exp\{-\frac{a_n^2}{2\sigma_0^2}-\log\sigma_0+\frac{E_n^2}{2\sigma_1^2}+\log\sigma_1\}\rightarrow 0.
\end{split}
\]
Note that the above convergence result holds since 
 $E_n^2/\sigma_1^2+\log\sigma_1\asymp \log p_n$
and $\sigma_0= O(a_n/\log p_n)$.
Hence ,
 \[
 \frac{g_\lambda(x)}{m_1\phi(x; 0, \sigma_1^2)} = 1 + \frac{1-m_1}{m_1}\frac{\sigma_1\exp\{-x^2/2\sigma_0^2\}}{\sigma_0\exp\{-x^2/2\sigma_1^2\}}\rightarrow 1.
\]
Therefore, we have 
\[
\begin{split}
l_n & =\max_{j\in\xi^*}\sup_{\substack{x_1,x_2\in \beta_j^*/\sigma^*\pm c_0\epsilon_n\\|x_1|,|x_2|\geq a_n}}\frac{g_\lambda(x_1)}{g_\lambda(x_2)}\\
& \asymp\max_{j\in\xi^*}\sup_{\substack{x_1,x_2\in \beta_j^*/\sigma^*\pm c_0\epsilon_n\\|x_1|,|x_2|\geq a_n}}\phi(x_1/\sigma_1)/\phi(x_2/\sigma_1)\\
&\leq \max_{j\in\xi^*}\sup_{x_1,x_2\in \beta^*_j/\sigma^*\pm c'\epsilon_n}\exp\{(x_1^2-x_2^2)/2\sigma_1^2\}\\
& =
\max_{j\in\xi^*}\exp\{2(\beta_j^*+c'\epsilon_n)c'\epsilon_n/\sigma_1^2\},
\end{split}
\]
which implies  $s\log l_n \leq O(sE_n\epsilon_n)/\sigma_1^2$.
The proof can be concluded by applying Theorems \ref{genthm}, \ref{vs} and \ref{BvM}.


\end{proof}

\end{appendix}

\end{document}


\setcounter{table}{0}
\renewcommand{\thetable}{S\arabic{table}}
\setcounter{figure}{0}
\renewcommand{\thefigure}{S\arabic{figure}}
\setcounter{equation}{0}
\renewcommand{\theequation}{S\arabic{equation}}
\setcounter{lemma}{0}
\renewcommand{\thelemma}{S\arabic{lemma}}
\setcounter{theorem}{0}
\renewcommand{\thetheorem}{S\arabic{theorem}}

\thispagestyle{empty}
\title{Supplementary Material for ``Nearly optimal Bayesian Shrinkage for High Dimensional Regression''}

 \author{Qifan Song and Faming Liang
\thanks{ 
Q. Song is Assistant Professor (email: qfsong@purdue.edu) and F. Liang is Professor (email: fmliang@purdue.edu),
 Department of Statistics,
  Purdue University, West Lafayette, IN 47907.
 }
 }

\date{}

\maketitle

\section{Inconsistency of Bayesian Lasso }

Bayesian Lasso imposes a Laplace prior on the regression coefficients $\bbeta$,  i.e., 
 \[
 g_{\lambda_n}(\beta_j)=(\lambda_n/2)\exp\{-\lambda_n|\beta_j|\}, \quad \mbox{for $j=1,\ldots,p_n$},
 \]
 where $\lambda_n$ is 
the scale parameter which may depend on $(n,p_n)$.
If $\sigma^*$ is known, then the  maximum {\it a posteriori} (MAP) estimator of Bayesian Lasso is exactly the
frequentist Lasso estimator.
In the literature, \cite{ArmaganDL2013} showed that under $p_n=o(n)$, Bayesian Lasso can attain $L_2$ consistency if $\lambda_m = O(\sqrt p_n \log n)$
(but its contraction rate is not optimal).
\cite{CastilloSHV2015,BhattacharyaPPD2015} showed that under normal means models, 
Bayesian Lasso is at best suboptimal, although the posterior consistency can still be attained.  
Note that there still remains noticeable difference between the normal means model and 
 the regression model; the design matrix of the former is $I$, and that of the latter is usually row-iid only. 
Hence, it is not trivial to extend the result of the normal means model to the high-dimensional regression model.
In what follows we conduct a simple simulation study to examine 
 the performance of Bayesian Lasso under a high dimensional setting. Let $p_n = \lfloor n^{1.5} \rfloor$, $\lambda = 2\sigma\sqrt{2.2n\log p}$ 
with $n$ increasing from 38 to 75, where $\lambda$ is chosen such that the frequentism consistency is achieved \cite{ZhangH2008}.
For each pair of $(n,p)$, we generated 32 independent datasets and estimated their respective $L_2$ and $L_1$ 
 errors of the $\bbeta$ estimator.  The results are summarized in Figure \ref{error}. 
Under the above choice of $\lambda_n$, 
the plot shows a potential trend of convergence of regression coefficients in $L_2$ errors,
but clearly no convergence in the $L_1$ norm.
The difference between the $L_1$ and $L_2$ convergence is due to that $L_1$ convergence is  much stronger as 
implied by the inequality $\|\bbeta-\bbeta^*\|_{2}\leq \|\bbeta-\bbeta^*\|_{1}\leq \sqrt{p_n}\|\bbeta-\bbeta^*\|_{2}$.
Moreover, $L_2$ convergence is not adequate for prediction consistency.
For any new $x_0$ with bounded $\|x_O\|_1$,
$|x_0^T\bbeta-x_0^T\bbeta^*|$ can be as large as $O(\sqrt p\epsilon)$.
However, under $L_1$ convergence, $|x_0^T\bbeta-x_0^T\bbeta^*|=O(\|\bbeta-\bbeta^*\|_1)=o(1)$.

\begin{figure}[htbp]
 \begin{center}
  \includegraphics[width=15cm]{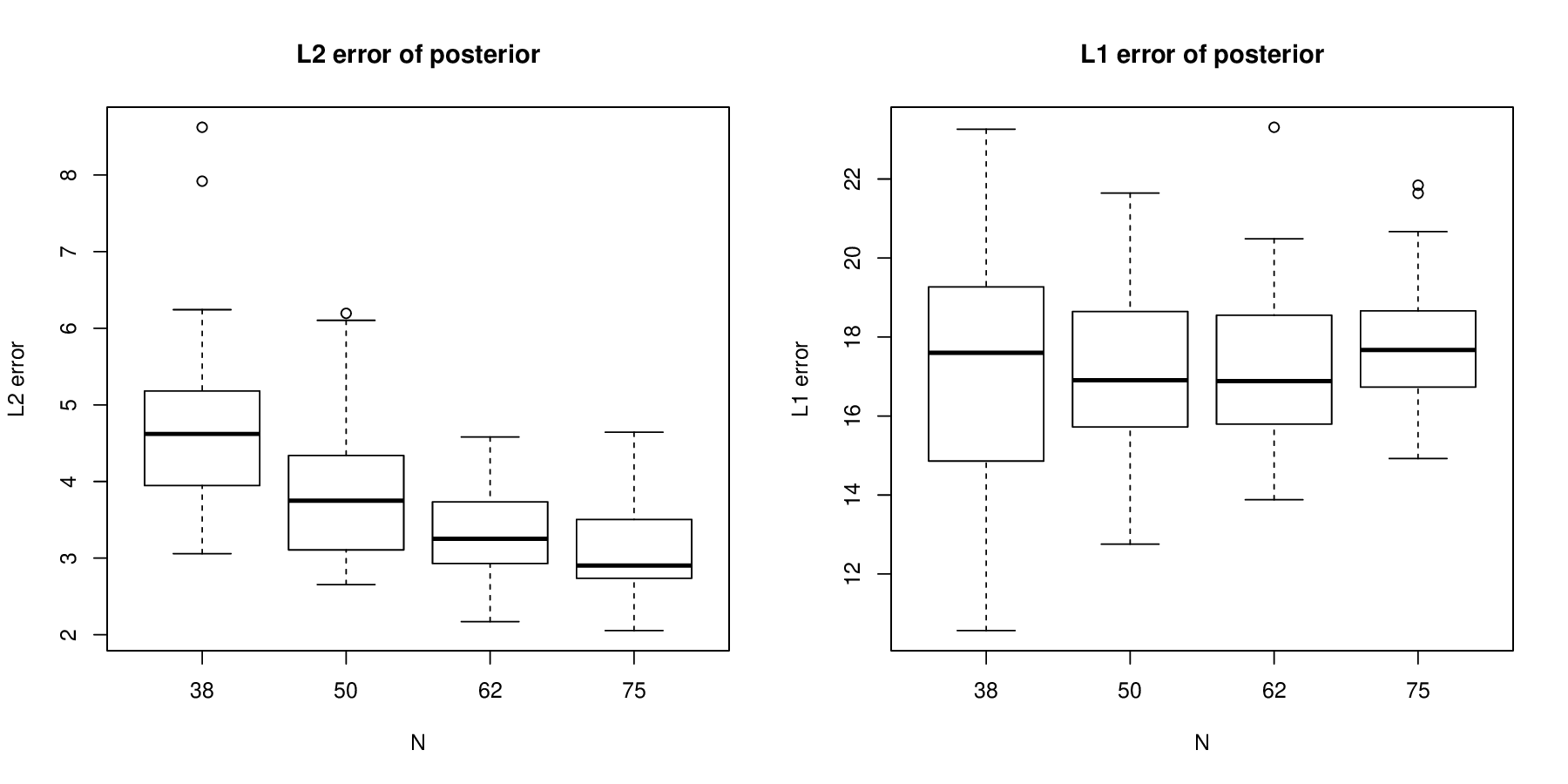}
  \caption{Convergence of Bayesian Lasso: (left) Box-plots of 
     $E_{\pi(\bbeta|D_n)}\|\bbeta-\bbeta^*\|_2$ (left) and box-plots of
     $E_{\pi(\bbeta|D_n)}\|\bbeta-\bbeta^*\|_1$ (right) over 32 replications. }\label{error}
 \end{center}
\end{figure}

To theoretically study posterior contraction for Bayesian Lasso, we impose the following condition additional to  $A_1$.
\begin{enumerate}[label=$A_1'$]
 \item \label{eig2}:  The rank of $\bX$ is $n$ and $\bX^T\bX$ has $n$ positive eigenvalues ${n\lambda_1},\dots, {n\lambda_n}$. 
 Furthermore, there exist some constant $C_0$ such that $C_0 p_n<n\lambda_i$ for all $1\leq i\leq n$.
\end{enumerate}
Since $\sum n\lambda_i \leq np_n$, this condition essentially requires that the condition number of the design matrix $\bX$ is bounded
in order by $\sqrt{n}$. 
When $p_n/n\geq C$ for some sufficiently large constant $C$, if $\bX$ has 
 iid sub-Gaussian entries (Theorem 1.1 of \cite{RudelsonV2009})
or the rows of $\bX$ are independently isotropic sub-Gaussian random vectors, 
 and the columns of $\bX$ are almost surely normalized [Theorem 
5.58 of \cite{Vershynin2012}], then condition \ref{eig2} holds with a dominating probability.

\begin{theorem}\label{blassoin}
 Assume that condition $A_1$ holds,  $\sigma^*$ is known, and a Laplace prior
 is imposed on the regression coefficients $\beta_j$'s, i.e.,  
 $\pi(\bbeta)=\prod_{j} (\lambda_n/2\sigma^*)\exp\{-\lambda_n|\beta_j|/\sigma^*\}$, where the tuning parameter $\lambda_n$  deterministically increases to $\infty$.
\begin{enumerate}
 \item If $\lim\inf p_n/n>1$ and $\lambda_n\epsilon \leq \delta_0 \sqrt p_n$ for some small $\delta_0$, then
 \begin{equation} \label{blasso2}
     \pi(\|\bbeta-\bbeta^*\|<\epsilon|D_n)=o_p(1), \mbox{ when $\bbeta^* = 0$}. 
\end{equation}
 \item If $A_1'$ hold, $p_n\succ n^2$, and  $\epsilon>0$ is any fixed sufficiently small positive number, then 
\begin{equation} \label{blasso1}
   \sup_{\|\bbeta^*\|_0\leq 1}\pi(\|\bbeta-\bbeta^*\|_1<\epsilon|D_n)\rightarrow 0, \quad a.s..
\end{equation}
regardless of the choice of the increasing order of
$\lambda_n$.
\end{enumerate}
\end{theorem}

Equation (\ref{blasso2}) implies that, in $L_2$-norm, Bayesian Lasso is at best suboptimal. 
To explain this further, let us assume that the true model size is $s=1$ and $p_n\geq n\log p_n$.
If $L_2$-consistency holds with a (near-)optimal rate as $\epsilon\preccurlyeq\sqrt{s\log p_n/n}$, 
then we must have 
$\lambda_n\geq O( \sqrt{n p_n/s\log p_n})$, which is much larger than the optimal choice of frequentism Lasso. 
With such an increasing rate of $\lambda_n$, 
by the fact that $\|\bX\bvarepsilon\|_\infty = O(\sqrt{n\log p_n})$ with a dominating probability (Bernstein inequality),
$\|\bX^T\bX\bbeta^*\|_\infty = O(n) $, and the KKT condition of LASSO, it is not difficult to show
that the MAP of Bayesian 
Lasso remains at $\bbeta = 0$ rather than inside the neighborhood of true $\bbeta^*$. This hence contradicts to the posterior consistency, since the negative logarithm of the posterior density is convex for Bayesian Lasso.
Equation (\ref{blasso1}) shows that when the design matrix is well-conditioned, the $L_1$-consistency can never be retained. In other
words, the predictive consistency fails for some newly observed $\bx$.
The failure of Bayesian Lasso is mainly due to its exponential tail. 
 If a hyperprior is imposed on $\lambda$, which changes the tail shape of the prior
 of $\bbeta$, then certain type of consistency can be achieved.

\begin{proof}
Without loss of generality, throughout the proof, we assume that the known error term $\sigma^*=1$.

We first prove equation (\ref{blasso1}).
Assume that $\bbeta^* = (\beta_1^*, 0,\dots, 0)$ and $\Delta\bbeta = \bbeta-\bbeta^*$.
For some small fixed $\epsilon$ and some $c_1>1$,
\[
\begin{split}
&\frac{\pi(\|\bbeta-\bbeta^*\|_1\leq \epsilon|D_n)}{\pi(\|\bbeta\|_1\leq \epsilon|D_n)} = \frac{\int_{\|\Delta\bbeta\|_1\leq \epsilon}\exp\{-\|\bvarepsilon-\bX\Delta\bbeta\|^2/2\}\pi(\bbeta^*+\Delta\bbeta)d\Delta\bbeta}
{\int_{\|\bbeta\|_1\leq \epsilon}\exp\{-\|\bX\bbeta^*+\bvarepsilon-\bX\bbeta\|^2/2\}\pi(\bbeta)d\bbeta}\\
\leq&\max_{\|\bbeta\|_1\leq \epsilon}\frac{\exp\{-\|\bvarepsilon-\bX\bbeta\|^2/2\}\pi(\bbeta^*+\bbeta)}
{\exp\{-\|\bX\bbeta^*+\bvarepsilon-\bX\bbeta\|^2/2\}\pi(\bbeta)}\\
\leq&\max_{\|\bbeta\|_1\leq \epsilon}\exp\{(\bbeta^{*T}\bX^T\bX\bbeta^*+2\|\bX\bbeta^*\|\|\bvarepsilon-\bX\bbeta\|)/2\}\max_{|\beta_1|\leq \epsilon_n}\frac{ \pi(\beta_1+\beta_1^*)}{\pi(\beta_1)}\\
\stackrel{as}{\leq}& \exp\{[n\beta_1^{*2}+2\sqrt n\beta_1^{*}(c_1\sqrt n+\sqrt n\epsilon)]/2 \}\max_{|\beta_1|\leq \epsilon_n}\frac{ \pi(\beta_1+\beta_1^*)}{\pi(\beta_1)}.
\end{split}
\]
Since $\pi(\beta_1+\beta_1^*)/\pi(\beta_1) = \exp\{-\lambda (|\beta_1+\beta_1^*|-|\beta_1|) \}\leq \exp\{-\lambda(|\beta_1^*|-2|\beta_1|)\}$,
the above inequality converges to zero if $\lambda\succ n$ and  $\beta_1^*>2\epsilon$.
 
Next, let's assume that  $\bbeta^* = 0$. Since the rank of the design matrix $\bX$ is $n$,
there exists a $p_n$ by $p_n$ orthogonal matrix $\Gamma$ such that the first $p_n-n$ columns of $\bX\Gamma$
is 0. We denote $\bX\Gamma = \bZ=[\bZ_1,\bZ_2]$.
 
 For some constant $c_1<1$,
 \[
 \begin{split}
 &\pi(\|\bbeta\|_1\leq \epsilon|D_n)=C \int_{\|\bbeta\|_1\leq \epsilon}\exp\{-\|\bvarepsilon-\bX\bbeta\|^2/2\}\pi(\bbeta)d\bbeta\\
\stackrel{a.s.}{\leq}&C V_{p_n}(\epsilon)\pi(0)\exp\{-(c_1\sqrt n -\sqrt n \epsilon)^2/2\}\\
=&C\frac{2^{p_n}\epsilon^{p_n}}{p_n!}(\lambda/2)^{p_n}\exp\{-(c_1-\epsilon)^2n/2\},
\end{split}\]
 where $V_{p_n}$ is the volume of $p_n$-dimensional $L_1$ ball, and $C$ is the normalizing constant.
 
 Let $\bbeta = \Gamma \bu$, $\bu=(u_1,\dots,u_{p_n})^T$, $\bu_1 = (u_1,\dots,u_{p_n-n})^T$, $\bu_2 = (u_{p_n-n+1},\dots,u_{p_n})^T$, 
 $\bu_1' = (u_1,\dots,u_{p_n-n}$, $0,\dots, 0)$ and $\bu_2' = \bu-\bu_1'$,
 \begin{equation}\label{lasso2}
 \begin{split}
 &\pi(\|\bbeta\|_1> \epsilon|D_n)=C \int_{\|\bbeta\|_1\leq \epsilon}\exp\{-\|\bvarepsilon-\bX\bbeta\|^2/2\}\pi(\bbeta)d\bbeta\\
 \stackrel{a.s.}{\geq}&
  \int_{\|\bvarepsilon-\bX\bbeta\|\leq \sqrt{n}/2}C\exp\{-\|\bvarepsilon-\bX\bbeta\|^2/2\}\pi(\bbeta)d\bbeta\\
  \geq &C\exp\{-n/4\} \int_{\|\bvarepsilon-\bX\bbeta\|\leq \sqrt{n}/2}\pi(\bbeta)d\bbeta
  =C\exp\{-n/4\} \int_{\|\bvarepsilon-\bZ\bu\|\leq \sqrt{n}/2}\pi(\Gamma\bu)d\bu\\
  =&C\exp\{-n/4\} \int_{\|\bvarepsilon-\bZ_2\bu_2\|\leq \sqrt{n}/2}\int_{\bu_1}\pi(\Gamma\bu)d\bu_1d \bu_2\\
  \geq & C\exp\{-n/4\} \int_{\|\bvarepsilon-\bZ_2\bu_2\|\leq \sqrt{n}/2}\int_{\bu_1}(\lambda/2)^{p_n}\exp\{-\lambda[\|\Gamma\bu_1'\|_1+
   \|\Gamma\bu_2'\|]\}d\bu_1d \bu_2\\
  \geq & C\exp\{-n/4\}(\lambda/2)^{p_n} \int_{\|\bvarepsilon-\bZ_2\bu_2\|\leq \sqrt{n}/2}\exp\{-\lambda\|\Gamma\bu_2'\|\}d\bu_2
   \int_{\bu_1}\exp\{-\lambda\|\Gamma\bu_1'\|_1\}d\bu_1.
  \end{split}
 \end{equation}
 
 We have that $ \int_{\bu_1}\exp\{-\lambda\|\Gamma\bu_1'\|_1\}d\bu_1\geq  \int_{\bu_1}\exp\{-\lambda\sqrt p_n\|\bu_1'\|_1\}d\bu_1
 =[2/\lambda\sqrt p_n]^{p_n-n}$.
 
 For any $\bu_2$ satisfying $\|\bvarepsilon-\bZ_2\bu_2\|\leq \sqrt{n}/2$, we have $\|\bZ_2\bu_2\|\in(c_2\sqrt n,  c_3\sqrt n) $ almost surely for $c_2<0.5<1<c_3$.
 The singular values (i.e., diagonal elements) of $\bZ_2$: $\sqrt {n \lambda_1}, \dots, \sqrt {n \lambda_n}$, which are also nonzero singular values of $\bX$, are larger than
 $\lambda_1'\sqrt p_n$,
 thus $\|\bu_2\|\leq c_4\sqrt{n/p}$ and $\exp\{-\lambda\|\Gamma\bu_2'\|\}\geq \exp\{-\lambda\sqrt p ( c_4\sqrt{n/p})\} = 
 \exp\{-c_4\lambda\sqrt n\}$.
 Also, the volume of $\{\|\bvarepsilon-\bZ_2\bu_2\|\leq \sqrt{n}/2\}$
  is 
 \[\frac{\pi^{n/2}}{\Gamma(n/2+1)}\prod_i \frac{\sqrt{n}/2}{\sqrt {n \lambda_i} }
 \leq \frac{\pi^{n/2}}{\Gamma(n/2+1)}\frac{\sqrt{n}^n/2^n}{\sqrt {p_n}^n},\]
 where the inequality holds since the geometric mean is always smaller than the quadratic mean.
  
  Combining the above results with inequality (\ref{lasso2}),
  we have
  \[
  \begin{split}
  \pi(\|\bbeta\|_1> \epsilon) 
   \stackrel{a.s.}{\geq} &C
   \frac{(\sqrt{n}/2)^n}{{\sqrt p_n}^n} \frac{\pi^{n/2}}{\Gamma(n/2+1)\sqrt{p_n^{p_n-n}}}(\lambda/2)^n\exp\{-n/4\}
   \exp\{-c_4\lambda\sqrt n\}.
  \end{split}
  \]
  
  Therefore,
  \[
  \begin{split}
  &\frac{ \pi(\|\bbeta\|_1<\epsilon)}{ \pi(\|\bbeta\|_1> \epsilon)}
  \stackrel{a.s}{\leq}\frac{(\epsilon_n\lambda\sqrt p_n)^{p_n}4^n\Gamma(n/2+1)}{(\sqrt {n\pi}\lambda)^np_n!}\exp\{n/4+c_4\lambda\sqrt n - (c_1-\epsilon)^2n/2\}.
  \end{split}
  \]
 By sterling approximation, $p_n! = O(\sqrt{p_n}p_n^{p_n}e^{-p_n})$, $\Gamma(n/2+1) = O(\sqrt{n/2}(n/2e)^{n/2})$, we have 
 \[
 \log\frac{ \pi(\|\bbeta\|_1<\epsilon)}{ \pi(\|\bbeta\|_1> \epsilon)}\stackrel{a.s}{\leq}
 p_n\log (\lambda\epsilon e/(\sqrt p_n))- n\log\lambda +\log(\sqrt n)-\log(\sqrt p_n)+O(n)+c_4\lambda\sqrt n,
 \]
 if $\lambda\prec \sqrt{p_n}$, the above term goes to $-\infty$.
 In summary, if $p_n\succ n^2$,  the $L_1$-consistency cannot be obtained for both $\bbeta^*=0$ and $\bbeta^*=(\beta_1^*, 0,\dots, 0).$

To prove equation (\ref{blasso2}), 
\[
\begin{split} 
&\frac{\pi(\|\bbeta\|_2\leq \epsilon|D_n)}{\pi(\|\bbeta\|_2> \epsilon|D_n)}\\
 =& \frac{\int_{\|\bu_2\|_2\leq\epsilon}\int_{\|\bu_1\|_2\leq 
\epsilon-\|\bu_2\|_2}\exp\{-\|\bvarepsilon-\bZ_2\bu_2\|^2/2\}\pi(\Gamma\bu)d\bu_1\bu_2}
{\int_{\bu_2}\int_{\|\bu_1\|_2> 
\epsilon-\|\bu_2\|_2}\exp\{-\|\bvarepsilon-\bZ_2\bu_2\|^2/2\}\pi(\Gamma\bu)d\bu_1\bu_2}\\
\leq&\max_{\|\bu_2\|_2\leq \epsilon}\frac{\int_{\|\bu_1\|_2\leq 
\epsilon-\|\bu_2\|_2}\exp\{-\|\bvarepsilon-\bZ_2\bu_2\|^2/2\}\pi(\Gamma\bu)d\bu_1}
{\int_{\|\bu_1\|_2> \epsilon-\|\bu_2\|_2}\exp\{-\|\bvarepsilon-\bZ_2\bu_2\|^2/2\}\pi(\Gamma\bu)d\bu_1}\\
=& \max_{\|\bu_2\|_2\leq \epsilon}\frac{\int_{\|\bu_1\|_2\leq \epsilon-\|\bu_2\|_2}\pi(\Gamma\bu)d\bu_1}
{\int_{\|\bu_1\|_2> \epsilon-\|\bu_2\|_2}\pi(\Gamma\bu)d\bu_1}\\
\leq & \max_{\|\bu_2\|_2\leq \epsilon}\frac{\int_{\|\bu_1\|_2\leq \epsilon-\|\bu_2\|_2}1d\bu_1}
{\int_{\|\bu_1\|_2> \epsilon-\|\bu_2\|_2}\exp(-\lambda\|\Gamma\bu\|_1)d\bu_1} \\
\leq & \max_{\|\bu_2\|_2\leq \epsilon}\frac{[(\epsilon-\|\bu_2\|_2)\sqrt \pi]^{p_n-n}/\Gamma(1+(p_n-n)/2)}
{\exp(-\lambda\sqrt{p_n-n}\|\bu_2\|_2)\int_{\|\bu_1\|_2> \epsilon-\|\bu_2\|_2}\exp(-\lambda\sqrt{p_n-n}\|\bu_1\|_2)d\bu_1} \\
=& \max_{\|\bu_2\|_2\leq \epsilon}\frac{[(\epsilon-\|\bu_2\|_2)\lambda\sqrt{p_n-n}])^{p_n-n}}
{\exp(-\lambda\sqrt{p_n-n}\|\bu_2\|_2)(p_n-n)\int_{t> \lambda\sqrt{p_n-n}(\epsilon-\|\bu_2\|_2)}\exp(-t)t^{p_n-n-1}dt} .
\end{split}
\]

Note that $\lambda\epsilon< \delta_0 \sqrt p_n$  and the median of the above gamma function is $O(p_n-n)$. When
$\delta_0$ is sufficiently small,
the above quantity is smaller than
\[
\frac{(\epsilon\lambda\sqrt{p_n-n})^{p_n-n}\exp(\epsilon\lambda\sqrt{p_n-n})}{(p_n-n)\Gamma(p_n-n)/2}=
O(\frac{(e\epsilon\lambda/\sqrt{p_n-n})^{p_n-n}\exp(\epsilon\lambda\sqrt{p_n-n})}{\sqrt{p_n-n}}),
\]
which is of order $o_p(1)$ if $\delta_0$ is small enough.
 \end{proof}

 \section{Additional illustrations for the toy example}
 
 This section provides some additional illustrations for the theoretical results obtained in Section 4.

 Figure \ref{toy2} shows the posterior boxplots produced by BCS with two different values of $\gamma$,
 either larger or smaller than the optimal value $\hat\gamma$.  
 It indicates that different values of $\gamma$ lead to different degrees of 
 posterior concentration for the false covariates, while the posterior distribution
 of the true covariates are unchanged.  This phenomenon agrees with our BvM approximation result.
 However, for Bayesian Lasso, Figure \ref{toy2} indicates that 
 neither increasing nor decreasing the value of $\lambda$ won't remedy the posterior inconsistency.

\begin{figure}[htb]
 \begin{center}
  \includegraphics[width=15cm]{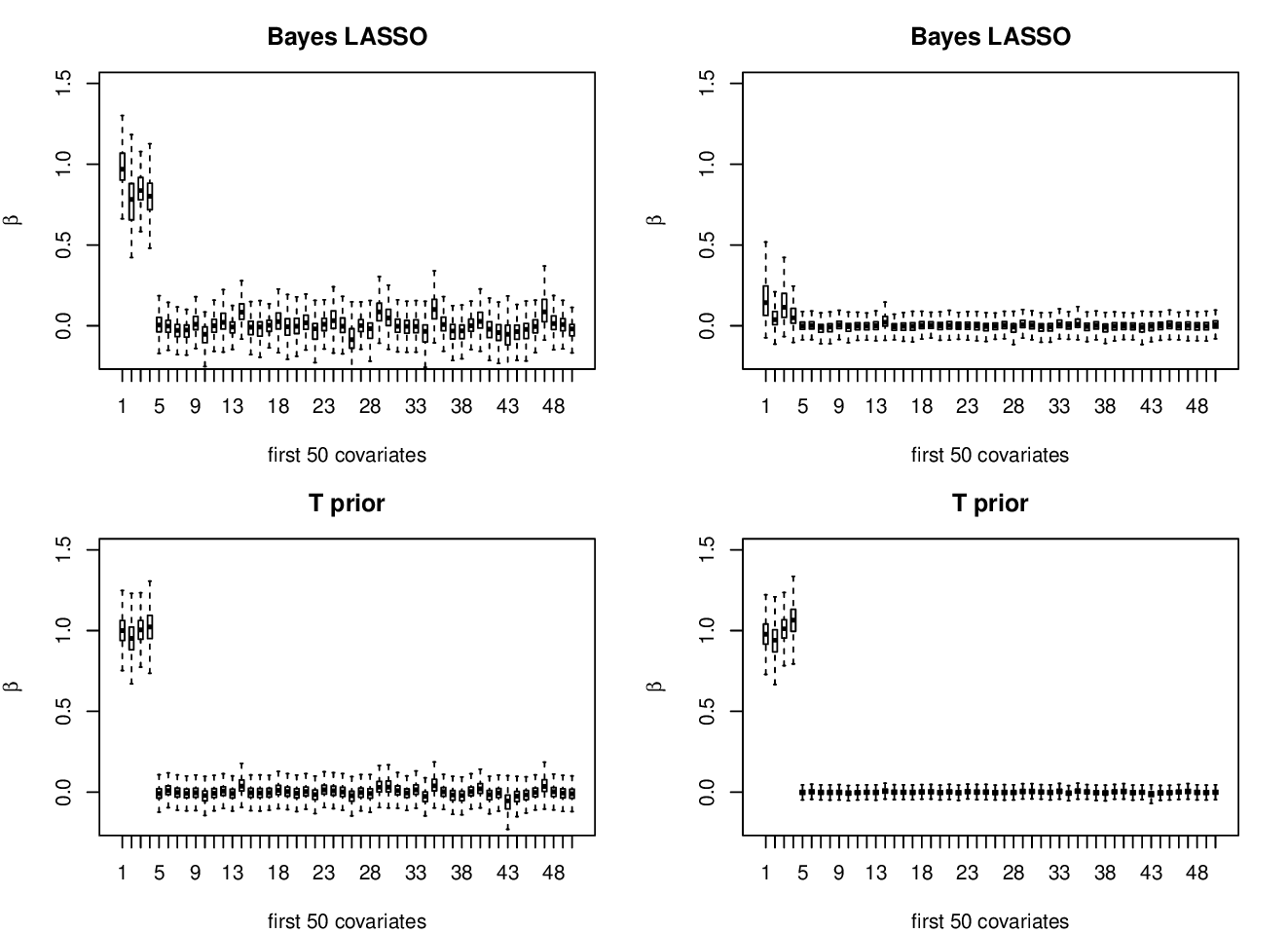}
  \caption{Sensitivity analysis of Bayesian Lasso and BCS to the scale parameter: 
  The left and right panels were produced with larger and smaller scale parameters, 
  respectively. 
  }\label{toy2}
 \end{center}
\end{figure}

Figure \ref{toy3} shows the confidence intervals by de-bias Lasso. Apparently, as shown in the plot, the true and false covariates have about the same width confidence intervals.  

\begin{figure}[htbp]
 \begin{center}
  \includegraphics[width=10cm]{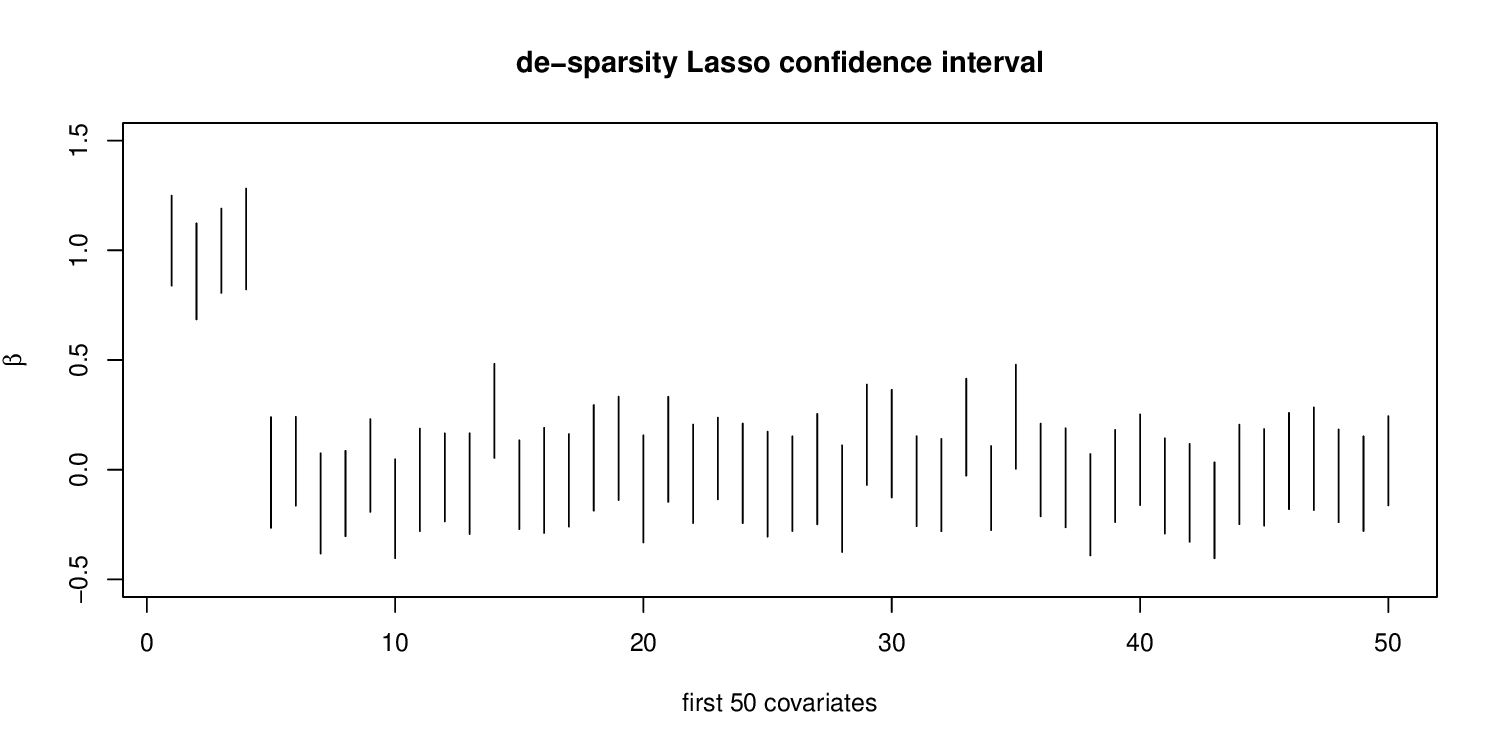}
  \caption{
  95\% marginal confidence intervals produced by de-biased Lasso 
  for the first 50 covariates.}\label{toy3}
 \end{center}
\end{figure}

\section{Simulation study of BCS under $\lambda_n\sim \mathcal {C}^{+}(0,1)$}
In the main text, we suggest to tune the global shrinkage parameter $\lambda_n$ such that it attains the minimal ``BIC-like score''. In the Bayesian literature, a popular way to tune the global shrinkage parameter is to impose a hyper prior on it. For example, one may let $\lambda_n$ be subject to a half-Cauchy prior \cite[The horseshoe,][]{CarvalhoPS2010}, i.e., $\lambda_n\sim \mathcal {C}^{+}(0,1)$. In such a way, it is not necessary to conduct multiple posterior simulations (under different levels of $\lambda_n$). However, our numerical studies show that this choice leads to inferior Bayesian inference.

We conducted additional simulations under the same settings of data generation as in Sections 5.1 and 5.2 with $t$ shrinkage and $\lambda_n\sim \mathcal {C}^{+}(0,1)$. The simulation results were reported in Table \ref{Tg1}. It is worth to note that the prior distribution for each $\beta_j/\sigma$ is $\beta_j/\sigma\sim t_3*\lambda_n$, where $\lambda_n\sim \mathcal {C}^{+}(0,1)$ and $t_3$ denotes the $t$-distribution with df=3. As a consequence, our proposed model selection rule doesn't work anymore, since the thresholding value $a$ (for $\pi(|\beta_j/\sigma|>a)=1/p_n$) can be of an order of  hundreds and the null model is always selected. Hence, in this simulation, we follow \cite{VanSV2017_2} to select predictors based on the posterior credible intervals: A predictor is selected if its 95\% credible interval excludes 0. 

As shown by Table S1, the $L_1$-error of $\bbeta$ is larger than those obtained by BCS (reported in Tables 1-4 of the main text). Although the selection of models and the coverage of credible intervals are similar to those by BCS (reported in Tables 1-4 of the main text), the credible intervals are much wider. The comparisons suggest that the half-Cauchy prior over $\lambda_n$ leads to  insufficient prior shrinkage and less accurate posterior concentration. In addition, the histograms of posterior samples obtained in the simulations do not follow the approximation (2.8) in the main text, which suggests that the asymptotic posterior shape result (i.e. Theorem 2.4) doesn't hold with the half-Cauchy global shrinkage prior.

\begin{table}
\caption{Simulation results of BCS under $\lambda_n\sim \mathcal {C}^{+}(0,1)$.}\label{Tg1}
\begin{center}
\begin{tabular}{ccccc}
 \hline
           & $n=80$, $p=201$ independent            & $n=80$, $p=201$        & $n=100$, $p=501$   & $n=100$, $p=501$  \\
           &independent&dependent &independent&dependent \\ \hline
$L_1$ error of $\bbeta_{\xi^*}$          &0.4213          &0.6550            & 0.3558  &0.5701\\ 
Standard error                           &0.0178          &0.0301            & 0.0180  &0.0296\\ \hline
$L_1$ error of $\bbeta_{(\xi^*)^{c}}$    &8.0597          &11.0509           &16.7710  &19.5830\\ 
Standard error                           &0.0860          &0.1197            & 0.0844  &0.1158\\\hline
\hline
$|\hat\xi\cap\xi^*|$                     & 3              &2.8929            &2.9821 &2.9107      \\ 
Standard error                           & ---            &0.0294            &0.0126 &0.0270  \\\hline
$|\hat\xi\cap(\xi^*)^{c}|$               & 0.2857         &0.3571            &0      &0.0089 \\ 
Standard error                           & 0.0587         &0.0536            &---    &0.0089 \\\hline
\hline                                                     
Coverage of $\xi^*$                      & 0.9077   & 0.8809    &0.9494  &0.9018  \\
Average length                           & 0.6144   & 0.8458    &0.5834  &0.8203  \\ \hline
Coverage of $(\xi^*)^{c}$                & 0.9983   & 0.9979    &1.0000  &0.9999  \\ 
Average length                           & 0.3557   & 0.4756    &0.2460  &0.3240  \\ \hline
\end{tabular}
\end{center}
\end{table}

\section{Simulation study of BCS under two-Gaussian mixture prior}
Theorem 3.2 in the main text shows that a two-Gaussian mixture prior, under proper hyperparameter settings, achieves posterior consistency, model selection consistency and posterior asymptotically normality. The following simulation aims to validate our theorem. 
This simulation study is under the same settings of data generation as in Sections 5.1 and 5.2 with the prior specification:
\begin{equation}
  \beta_j/\sigma \sim (1-\xi_j)\mbox{N}(0,\sigma_0^2)+\xi_j\mbox{N}(0,\sigma_1^2),
\quad \xi_j\sim\mbox{Bernoulli}(m_1),
\end{equation}
where we choose $m_1=1/p_n^{1.7}$, $\sigma_0^2=1/(np_n)$ and $\sigma_1^2=p_n^{1.5}$.

We evaluate the $L_1$ estimation error of posterior mean estimator and the coverage of posterior marginal percentile credible intervals. As for the model selection results, as discussed in the main text, we choose the threshold $a$ such that $\pi(|\beta_j/\sigma|>a) = 1/p_n$, and the Bayesian model selection estimator is $\hat\xi=\{j: \pi(|\beta_j/\sigma|>a|D_n)>0.5\}$. For convenience, $a$ can be approximated by $a\approx \sigma_0\Phi^{-1}(1-1/2p_n)$. The results are summarized in Table \ref{Tg2}.

From Table \ref{Tg2}, we observe that: On the one hand, this Bayesian procedure almost perfectly selects the true model. On the other hand, its shrinkage effect on the false predictors is not as strong as $t$-prior, yielding a larger $L_1$ error of $\bbeta_{(\xi^*)^c}$; and its coverage performance of the true predictors is not satisfactory. We believe it is because the hyperparameters are not optimally tuned, especially for the values of $\sigma_1^2$ and $\sigma_0^2$. However, tuning all three hyperparameters simultaneously (i.e., $m_1$, $\sigma_1^2$ and $\sigma_0^2$) is usually not feasible in statistical training. Even though, we can see that the performance of the two-Gaussian mixture prior is still much better than Bayesian Lasso result.

 \begin{table}
\caption{Simulation results of BCS under two-Gaussian-mixture prior.}\label{Tg2}
\begin{center}
\begin{tabular}{ccccc}
 \hline
           & $n=80$, $p=201$ independent            & $n=80$, $p=201$        & $n=100$, $p=501$   & $n=100$, $p=501$  \\
           &independent&dependent &independent&dependent \\ \hline
$L_1$ error of $\bbeta_{\xi^*}$          &0.3497          &0.5329            & 0.2604  &0.3812\\ 
Standard error                           &0.0309          &0.0583            & 0.0191  &0.0411\\ \hline
$L_1$ error of $\bbeta_{(\xi^*)^{c}}$    &0.1185          &0.0993            & 9.0376  &8.9761\\ 
Standard error                           &0.0067          &0.0075            & 0.0465  &0.0368\\\hline
\hline
$|\hat\xi\cap\xi^*|$                     & 2.9285         &2.8571            &2.9643  &2.9196      \\ 
Standard error                           & 0.0275         &0.0420            &0.0176  &0.0287  \\\hline
$|\hat\xi\cap(\xi^*)^{c}|$               & 0.0982         &0.0536            &0.0178  &0.0089 \\ 
Standard error                           & 0.0282         &0.0215            &0.0125  &0.0089 \\\hline
\hline                                                     
Coverage of $\xi^*$                      & 0.7411         & 0.6904           &0.7261  &0.7351  \\
Average length                           & 0.3646         & 0.4151           &0.2949  &0.3522  \\ \hline
Coverage of $(\xi^*)^{c}$                & 0.9998         & 1.0000           &1.0000  &1.0000  \\ 
Average length                           & 0.0256         & 0.0245           &0.0131  &0.0130  \\ \hline
\end{tabular}
\end{center}
\end{table}

\vskip 0.2in
\bibliographystyle{plain}
\bibliography{ref}